\documentclass[reqno,11pt]{amsart}
\usepackage{amsmath,enumerate,amsfonts,amssymb,amsthm,mathrsfs,amsfonts}
\usepackage{graphicx}
\usepackage{bm}
\allowdisplaybreaks[4]
\usepackage{times}
\usepackage{float}
\usepackage[numbers, sort&compress]{natbib}
\usepackage{hyperref}
\usepackage{framed}
\usepackage[normalem]{ulem}
\usepackage[margin=1.25in]{geometry}
\hypersetup{
	colorlinks,
	citecolor=blue,
	filecolor=blue,
	linkcolor=blue,
	urlcolor=black
}
\numberwithin{equation}{section}
\newtheorem{thm}{Theorem}[section]

\newtheorem{lem}{Lemma}[section]
\newtheorem{prop}{Proposition}[section]

\newtheorem{defn}{Definition}[section]

\newtheorem{cor}{Corollary}[section]
\newtheorem{rem}{Remark}[section]

\newcommand{\ud}{\mathrm{d}}
\newcommand{\sB}{\mathscr{B}}
\newcommand{\sT}{\mathscr{T}}

\newcommand{\lam}{\lambda}
\numberwithin{equation}{section}
\newcommand{\N}{\mathbb{N}}
\newcommand{\R}{\mathbb{R}}

\newcommand{\Rn}{\mathbb{R}^n}
\newcommand{\B}{\mathbb{B}}
\newcommand{\cB}{\mathcal{B}}
\newcommand{\cG}{\mathcal{G}}

\newcommand{\cQ}{\mathcal{Q}}
\newcommand{\cI}{\mathcal{I}}
\renewcommand{\S}{\mathbb{S}}
\newcommand{\Sn}{\mathbb{S}^n}
\newcommand{\ga}{\gamma}
\newcommand{\Ga}{\Gamma}
\newcommand{\kap}{\kappa}
\newcommand{\var}{\varepsilon}
\renewcommand{\d}{\mathrm{d}}
\newcommand{\Si}{\Sigma}

\newcommand{\al}{\alpha}
\newcommand{\pa}{\partial}

\newcommand{\be}{\begin{equation}}      \newcommand{\ee}{\end{equation}}
\title[Classification of conformal metrics with constant $Q$ and $T$ curvatures]{Conformal Metrics on the unit Ball with Constant $Q$-Curvature, Constant $T$-Curvature, and Minimal Boundary}
\author{Liming Sun}
\address{State Key Laboratory of Mathematical Sciences, Academy of Mathematics and Systems Science, Chinese Academy of Sciences, Beijing 100190, China}
\email{lmsun@amss.ac.cn}

\author{Heming Wang}
\address{State Key Laboratory of Mathematical Sciences, Academy of Mathematics and Systems Science, Chinese Academy of Sciences, Beijing 100190, China}
\email{hmwang@amss.ac.cn}

\author{Shihong Zhang}
\address{Yau Mathematical Sciences Center, Tsinghua University, Beijing 100084, China}
\email{shihong-zhang@tsinghua.edu.cn}
\date{\today}
\keywords{$Q$-curvature, $T$-curvature, conformally invariant boundary operators, method of moving spheres, sub-supersolution method}
\subjclass[2020]{35B53, 35C15, 53C21, 35J66}

\begin{document}

	\begin{abstract}
 We completely classify conformal metrics on the unit ball $(\mathbb{B}^{n+1},|\d x|^2)$, $n\geq4$, with positive constant $Q$-curvature, positive constant $T$-curvature, and minimal boundary. After normalizing the $Q$-curvature, there is a unique conformal metric for each $T$-curvature value in $[0,+\infty)$, up to conformal diffeomorphism. For positive $T$-curvature, these metrics are not Einstein and yield a new family of bubble profiles, distinct from the Aubin--Talenti bubble family except when $T=0$. This new phenomenon has no analogue in either the second-order boundary Yamabe problem or the constant $Q$-curvature problem on closed manifolds. To our knowledge, this is the first classification result for a fourth-order boundary value problem with nonlinear terms both in the interior and on the boundary.

		Using  the conformal M\"obius map from the unit ball  to the
upper half-space, the above geometric problem  admits an equivalent conformally invariant formulation on the upper half-space. More precisely, for $n\geq 4$ and $c_3\in\R_+$, we consider the fourth-order boundary value problem
		\be\tag{$*$}\label{1}
		\left\{\begin{aligned}
			&\Delta^2u=\frac{(n-3)(n-1)(n+1)(n+3)}{16}u^{\frac{n+5}{n-3}} &&\text{ in }\,\R_+^{n+1},\\
			&\sB^3_1(u)=0 &&\text{ on }\,\pa\R^{n+1}_+,\\
			&\sB^3_3(u)=c_3u^{\frac{n+3}{n-3}} &&\text{ on }\,\pa\R^{n+1}_+,
		\end{aligned}\right.
		\ee
		under the finite-volume conditions
	\[
		\int_{\R^{n+1}_+}u^{\frac{2(n+1)}{n-3}}(x,t)\,\d x\,\d t<+\infty
		\quad\text{ and }\quad
		\al^{\frac{2n}{n-3}}:=\frac{1}{|\Sn|}\int_{\Rn}u^{\frac{2n}{n-3}}(x,0)\,\d x<+\infty.
		\]
		We completely classify the positive solutions of \eqref{1}: if $\al\geq1$, there is no positive solution; if $\al\in(0,1)$, then there exists a unique family of positive solutions, depending continuously on $c_3$, which are not Aubin--Talenti bubbles but converge to such bubbles as $c_3\to0^+$.

	\end{abstract}

	\maketitle
	\tableofcontents

	\section{Introduction}
    \subsection{Background and motivation}
	The classical Yamabe problem  asks whether every conformal class  on a closed Riemannian manifold contains a metric of constant scalar curvature; see \cite{LP1987} for a survey. Escobar \cite{E1990,E1996} initiated the corresponding boundary Yamabe problem
on compact manifolds with boundary, seeking conformal metrics with constant
scalar curvature in the interior and constant mean curvature on the boundary.
	One of the  most important model cases is the Euclidean ball, which is conformally
	equivalent to the upper half-space. This naturally leads to Liouville-type classification theorems for conformally invariant equations on the upper half-space.  We first recall the second-order Liouville theory that motivates the present work.



	For $n\geq 2$, 
	write $X=(x,t)\in\R^{n+1}_+=\{(x,t):x=(x_1,\ldots,x_n)\in \Rn, t>0\}$. In the scalar-flat case, prescribing constant boundary mean curvature leads,
after normalization, to the boundary value problem
		\be\label{LZ2}\tag{I}
		\left\{\begin{aligned}
			&\Delta u=0&&\text{ in }\,  \R_{+}^{n+1},\\
			&\pa _tu(x,0)=cu^{\frac{n+1}{n-1}}&&\text{ on }\,   \pa \R^{n+1}_{+}.
		\end{aligned}\right.
		\ee
        Here $c\in \R$ corresponds to the prescribed boundary mean curvature, up to a dimensional constant.
		Li and Zhu \cite{LZ1995}  classified all  nonnegative solutions $u\in C^2(\R^{n+1}_+)\cap C^1(\overline{\R^{n+1}_+})$ of equation \eqref{LZ2}:  if  $c \geq 0$, then $u=a t+b$ with $a, b \geq 0$, $a=c b^{(n+1) /(n-1)}$; if $c<0$, then either $u \equiv 0$ or $u$ takes the form  \[
		u(x,t) =\Big(\frac{2\var}{(t-t_0)^2+|x-x_0|^2}\Big)^{\frac{n-1}{2}}
		\] for some $\varepsilon>0$, $x_0 \in \mathbb{R}^{n}$ and $t_0=2(n-1)^{-1} \varepsilon c$.
		
		The case of positive constant scalar curvature leads, after normalization,
to the critical boundary value problem
		\be\label{LZ1}\tag{II}
		\left\{\begin{aligned}
			&-\Delta u=\frac{(n-1)(n+1)}{4}u^{\frac{n+3}{n-1}}&&\text{ in }\,  \R_{+}^{n+1},\\ &\pa _tu(x,0)=cu^{\frac{n+1}{n-1}}&&\text{ on }\,  \pa \R^{n+1}_{+}.
		\end{aligned}\right.
		\ee
		Escobar \cite{E1990}  classified  all positive solutions $u\in C^2(\R^{n+1}_+)\cap C^1(\overline{\R^{n+1}_+})$ of  equation \eqref{LZ1} satisfying the decay condition $u(X) = O(|X|^{1 - n})$ as $|X|\to \infty$,
		and showed that they must take the form
		\be\label{LZ-bubble}
		u(x,t) =\Big(\frac{2\var}{\var^2+|(x,t)-(x_0,t_0)|^2}\Big)^{\frac{n-1}{2}}
		\ee
		for some $\var>0$, $x_0\in \Rn$, and $t_0=2(n-1)^{-1}\var c$.  Later, Li and Zhu \cite{LZ1995} removed the decay assumption and
proved that every nonnegative solution
		$u\in C^2(\R^{n+1}_+)\cap C^1(\overline{\R^{n+1}_+})$ of \eqref{LZ1} is either identically zero or of the form
\eqref{LZ-bubble}.  See also Chipot-Shafrir-Fila \cite{CSF1996} and  Chipot-Chlebik-Shafrir-Fila \cite{CCFS1998}.

The classification results above describe the rigidity phenomenon for
solutions of the second-order boundary Yamabe problem. The corresponding existence theory
for conformal metrics with prescribed boundary mean curvature has also been
extensively studied; see, for instance,
\cite{LMR2022,R2024,CV2019,LRR2025,ACA2008,DMA2004,XZ2016} and the references
therein.

Motivated by this second-order theory, we investigate its fourth-order
analogue. More precisely, we study Liouville-type classification and rigidity
problems for conformal metrics with constant $Q$-curvature in the interior
and constant $T$-curvature on the boundary.
	To formulate the corresponding
fourth-order boundary equations, we first recall the conformally covariant
boundary operators associated with the Paneitz operator.

	Let $(X^{n+1},g)$ be a smooth Riemannian manifold of dimension $n+1$ with
	boundary $M^n=\partial X^{n+1}$.  In the critical dimensional case $n=3$, Chang and Qing \cite{CQ1997-1} introduced a third-order conformally covariant boundary operator and its associated $T_3$-curvature. For $n\ge4$, the Paneitz operator is accompanied by a full family of conformally covariant boundary operators, developed in the works of Branson-Gover \cite{BG2001}, Grant \cite{G2003}, Stafford \cite{S2006}, Juhl \cite{J2009}, and Gover-Peterson \cite{GP2021}.  We denote these operators by $\sB_i^3$, $i=0,1,2,3$, whose explicit formulas will be recalled in Section \ref{sec:2.1}.  For $i=1,2,3$, the associated boundary $T$-curvatures are denoted by $(\sT_i^3)_g$, see \eqref{Tcurvatures}. 
   In the Euclidean upper half-space
model
		$ (\R^{n+1}_+, \pa  \R^{n+1}_+, \d t^2 + \d x^2)$,  the corresponding boundary
operators reduce to $\sB^3_0(u)=u$,
		\[
		\sB^3_1(u)=-\pa _tu,\quad 
		\sB^3_2(u)=\pa _t^2u-\Delta_{\Rn}u,\quad 
		\sB^3_3(u)=\pa _t\Delta u+2\Delta_{\Rn}\pa _tu
		\]
for all $u\in C^{\infty}(\overline{\mathbb{R}^{n+1}_+})$, and all three background $T$-curvatures vanish.

		Before turning to our nonlinear interior problem, we  recall several
higher-order Liouville-type classification results for $Q$-flat equations
with nonlinear boundary conditions, which may be viewed as fourth-order
analogues of the scalar-flat problem \eqref{LZ2}.

		For problems with a single nonlinear boundary term, the first named author and Xiong \cite{SX2016} investigated polyharmonic equations equipped with highest-order nonlinear conformal boundary data. We recall only the biharmonic case.  Let $n\geq4$, and consider nonnegative solutions
$u\in C^4(\overline{\R^{n+1}_+})$ of
	        \be\label{SX}
		\left\{\begin{aligned}
			&\Delta^2 u=0&&\text{ in }\, \R_{+}^{n+1},\\
			&\pa_t u(x,0)=0&&\text{ on }\,\pa\R^{n+1}_{+},\\
			&\pa_t\Delta u(x,0)=u^{p_3^{*}}&&\text{ on }\,\pa\R^{n+1}_{+},
		\end{aligned}\right.
		\ee
	where $p_3^*=\frac{n+3}{n-3}$.	Assuming additionally that $u(X)=o(|X|^3)$ as $|X|\to+\infty$, they proved that every solution has the form
		\[
			u(x,t)=c(n)\int_{\Rn}\frac{t^3}{(|x-y|^2+t^2)^{\frac{n+3}{2}}}\Big(\frac{\lam}{\lam^2+|y-x_0|^2}\Big)^{\frac{n-3}{2}}\,\d y+a t^2,
		\]
		where $x_0\in\Rn$, $\lam\in\R_+$, $c(n)$ is a normalized constant, and $a\geq0$. Geometrically, after transferring the problem from
the upper half-space to the unit ball by conformal equivalence \eqref{conformalmapS}, this gives
a classification of conformal metrics with vanishing $Q$-curvature,
vanishing $\sT_1^3$-curvature, and constant $\sT_3^3$-curvature, allowing
a single boundary singularity. Recently,  Ndiaye and  the first named author  \cite{NS2024} proved   a classification result
for the biharmonic equation in $\R^4_+$
 with vanishing  $T_3$-curvature  boundary condition.  The existence problem for prescribing
$T$-curvature on four-dimensional manifolds with boundary has also been
widely studied; see, for instance,
\cite{N2009,H2025,NS2024,CQ1997-1,CQ1997-2,HNSW2026} and the references
therein.

	For equations with multiple nonlinear boundary terms, Chen and the last named author
\cite{CZ202406} studied, among other cases,  biharmonic equations with two
nonlinear conformally covariant boundary conditions in dimension $n\geq4$:
		\be\label{PDE:Chen-Zhang}
		\left\{\begin{aligned}
			&\Delta^2 u=0  &&\text{ in }\, \R_+^{n+1},\\
			&\sB^3_i(u)=c_i u^{p^{*}_i}&&\text{ on }\,  \pa \R^{n+1}_{+},\\
			& \sB_j^3(u)=c_ju^{p^{*}_j} &&\text{ on }\,  \pa \R^{n+1}_{+},
		\end{aligned}\right.
		\ee
	 where $c_i,c_j>0$, $i,j\in\{1,2,3\}$ with $i\neq j$, and $p_k^* = \frac{n+2k-3}{n-3}$, $k \in \{1,2,3\}$. Geometrically, \eqref{PDE:Chen-Zhang} corresponds to the $Q$-flat problem
with constant boundary $T$-curvatures.  They derived
explicit biharmonic Poisson kernel representations and established
Liouville-type classification theorems for nonnegative solutions of \eqref{PDE:Chen-Zhang}  under suitable decay assumptions on the boundary trace $u(x,0)$.

	From now on, we assume that $n\geq4$.	The results reviewed above concern the $Q$-flat counterpart of the
scalar-flat problem \eqref{LZ2}. This naturally leads to the higher-order
analogue of the positive scalar-curvature problem \eqref{LZ1}: can one
classify conformal metrics with positive constant $Q$-curvature in the
interior and constant $T$-curvatures on the boundary? Motivated by this
question and by the $Q$-flat classification problem
\eqref{PDE:Chen-Zhang}, we consider
		\be\label{maineq2}
		\left\{\begin{aligned}
			&\Delta^2 u=\kappa u^{p^{*}}  &&\text{ in }\,  \R_+^{n+1},\\
			&\sB^3_i(u)=c_i u^{p^{*}_i} &&\text{ on }\,  \pa \R^{n+1}_{+},\\
			&\sB_3^3(u)=c_3u^{p^{*}_3} &&\text{ on }\,  \pa \R^{n+1}_{+},
		\end{aligned}\right.
		\ee
		where $i\in\{1,2\}$, $p^*=\frac{n+5}{n-3}$, $p_k^*=\frac{n+2k-3}{n-3}$ with $k=1,2,3$, and
\[
\kappa=\frac{n-3}{2}Q_{\S^{n+1}}=\frac{(n-3)(n-1)(n+1)(n+3)}{16}.
\]
Geometrically, if $ g_u=u^{\frac{4}{n-3}}(\d t^2+\d x^2)$, then \eqref{maineq2} prescribes the constant interior curvature $ Q_{g_u}=Q_{\S^{n+1}}$, together with constant boundary $T$-curvatures \[ (\sT_i^3)_{g_u}=\frac{2}{n-3}c_i, \qquad (\sT_3^3)_{g_u}=\frac{2}{n-3}c_3. \]
Compared with \eqref{PDE:Chen-Zhang}, problem \eqref{maineq2} contains a
nonlinear interior term. Moreover, we impose the finite-volume condition
\eqref{Finitevolume} instead of a prescribed pointwise decay rate at infinity.
We also note that the pair $(\sB_1^3,\sB_2^3)$ fails the complementing
condition; see Lemma \ref{lem:complementing-condition}.

		Before stating our results, we highlight three differences between \eqref{maineq2} and the second-order model \eqref{LZ1}. First, the fourth-order existence theory is less developed because local maximum principles are generally unavailable. The bounded-domain barrier argument used by Li and Zhu \cite{LZ1995} therefore has no direct analogue here; for the Paneitz boundary operators, only certain global maximum principles are known in the standard model \cite[Proposition~4.1]{CZ202406}. Second, the existence of positive solutions depends essentially on the boundary-volume parameter $\al$, as Theorem \ref{thm:Main} will show. Third, representation formulas for polyharmonic interior equations, such as \eqref{SX} and \eqref{PDE:Chen-Zhang}, may contain polynomial terms, whereas the nonlinear interior equation in \eqref{maineq2} provides no analogous reason to expect such terms. These distinctions motivate the classification and existence results stated below.

    \subsection{Main results}
Our first result indicates that under appropriate sign conditions on $T$-curvatures,  the classification of nonnegative finite-volume
solutions of \eqref{maineq2} boils down to a radial fourth-order boundary value problem
on the unit ball. 

		\begin{thm}\label{thm:main0}
			Let $n\ge4$, $i\in\{1,2\}$, $c_i\le0$, and $c_3\ge0$.	Assume that  $u\in C^4(\overline{\R^{n+1}_+})$ is a nonnegative solution of  \eqref{maineq2} satisfying the finite-volume conditions
			\be\label{Finitevolume}\tag{F}
			\int_{\R^{n+1}_{+}}u^{\frac{2(n+1)}{n-3}}(x,t)\, \d x\, \d t <+\infty
			\quad \text{ and } \quad
			\al^{\frac{2n}{n-3}}:=\frac{1}{|\Sn|}\int_{\Rn }u^{\frac{2n}{n-3}}(x,0)\, \d x<+\infty.
			\ee

			Then either $u\equiv 0$, or $u>0$ in $\overline{\R^{n+1}_+}$ and there exist $\varepsilon>0$ and $x_0\in\Rn$ such that the
			boundary trace of $u$ is given by a multiple of an Aubin--Talenti bubble
\[
u(x,0)=\al \Big(\frac{2\var}{\var^2+|x-x_0|^2}\Big)^{\frac{n-3}{2}}, \] where $\al $ is the boundary-volume parameter defined in \eqref{Finitevolume}. Furthermore, \[ u(x,t)=\Big(\frac{2\var}{(\var+t)^2+|x-x_0|^2}\Big)^{\frac{n-3}{2}}U\circ \mathcal S\Big(\frac{x-x_0}{\var},\frac{t}{\var}\Big),
\]
	where $\mathcal S$ is the conformal M\"obius transformation defined in
\eqref{conformalmapS}, and $U$ is
			a positive radial solution on $\mathbb B^{n+1}$ of
			\be\label{BVP_U-1}
			\left\{\begin{aligned}
				&\Delta^2 U=\kappa U^{p^{*}}&&\text{ in }\,  \B^{n+1},\\
				&U=\al ,~\sB^3_i(U)=c_i\al ^{p_i^*}&&\text{ on }\,\Sn ,\\
				&\sB^3_3(U)=c_3\al ^{p_3^*}&&\text{ on }\, \Sn .
			\end{aligned}\right.
			\ee
		\end{thm}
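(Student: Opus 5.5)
We argue by an integral-representation / moving-spheres strategy in four steps: (i) derive an integral system for $u$, (ii) apply moving spheres to it, (iii) read off the bubble form of the boundary trace, and (iv) pull back to the ball to get the radial problem \eqref{BVP_U-1}.

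\textbf{Step 1: integral representation.} Using the Green function of $\Delta^2$ on $\R^{n+1}_+$ adapted to the boundary operators $\sB^3_i$ and $\sB^3_3$, we write
\[
u(X)=\kappa\int_{\R^{n+1}_+}G_i(X,Y)\,u^{p^*}(Y)\,\d Y+\int_{\Rn}\Big(-c_i\,K_i(X,y)\,u^{p_i^*}(y,0)+c_3\,K_3(X,y)\,u^{p_3^*}(y,0)\Big)\d y+P(X),
\]
with the sign conventions arranged so that the kernels are positive. Here $P$ is a biharmonic remainder with homogeneous boundary data. The explicit Poisson-type kernels of Chen--Zhang supply $K_i,K_3$.

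The finite-volume conditions \eqref{Finitevolume} enter at this point. They give $u^{p^*}\in L^{2(n+1)/(n+5)}(\R^{n+1}_+)$ and $u^{p_k^*}(\cdot,0)$ in the dual Lebesgue spaces on $\Rn$. By Hardy--Littlewood--Sobolev, the integral terms are then finite. A Liouville argument for $P$ uses nonnegativity of $u$, the sign conditions $c_i\le0\le c_3$, and the global maximum principle \cite[Proposition~4.1]{CZ202406}: $P$ is a polynomial bounded below with zero boundary data, and integrability forces $P\equiv0$.

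Since all kernels are positive and all source terms are nonnegative, either $u\equiv0$ or $u>0$ on $\overline{\R^{n+1}_+}$.

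\textbf{Step 2: moving spheres.} We run the method of moving spheres on this integral system, centered at boundary points $(x_0,0)$, with reflection
\[
u_{x_0,\lambda}(X)=\Big(\frac{\lambda}{|X-x_0|}\Big)^{n-3}u\Big(x_0+\frac{\lambda^2(X-x_0)}{|X-x_0|^2}\Big).
\]
Boundary-centered inversions preserve $\R^{n+1}_+$, and all three nonlinearities are conformally critical, so $u_{x_0,\lambda}$ satisfies the same integral system. Each kernel satisfies the usual comparison inequality under reflection, e.g. $G(X,Y)>\big(\tfrac{\lambda}{|X-x_0|}\big)^{n-3}G(X^{\lambda},Y)$ in the appropriate combination. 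The difference $u-u_{x_0,\lambda}$ outside the ball is therefore controlled by integrals of the positive parts of $u_{x_0,\lambda}-u$ against positive kernels. Hölder and HLS inequalities, using the finiteness of the integrals in \eqref{Finitevolume} on exterior regions, make these small. This starts the process for small $\lambda$, and it can be pushed until a critical $\bar\lambda(x_0)$.

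If $\bar\lambda(x_0)=\infty$ for some $x_0$, then $u$ is monotone in the wrong direction and contradicts integrability. Hence $\bar\lambda(x_0)<\infty$ for every $x_0$, with $u_{x_0,\bar\lambda}\equiv u$. The standard calculus lemma of Li--Zhang/Li--Zhu, applied to the trace $u(\cdot,0)$, then shows that it is a multiple of a bubble, $a\big(\frac{2\varepsilon}{\varepsilon^2+|x-x_0|^2}\big)^{\frac{n-3}{2}}$.

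\textbf{Step 3: identify the constant.} Computing $\int_{\Rn} u^{2n/(n-3)}(x,0)\,\d x$ for this profile gives $a^{2n/(n-3)}|\Sn|$, so $a=\al$.

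\textbf{Step 4: pass to the ball.} After translating and scaling so that $x_0=0$ and $\varepsilon=1$, the invariance $u_{0,\lambda}\equiv u$ holds for the critical radius at every center. This invariance is exactly the statement that the pullback $U$ via $\mathcal S$ is invariant under all reflections fixing the corresponding sphere structure, which means $U$ is radial on $\B^{n+1}$. The conformal covariance of $\Delta^2$ and of $\sB^3_k$ under $\mathcal S$ transforms \eqref{maineq2} into \eqref{BVP_U-1}, and the boundary value $U=\al$ on $\Sn$ comes from the trace formula in Step 3.

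I expect the main obstacle to be Step 1: building the representation without decay assumptions. This requires ruling out the polynomial $P$ for the mixed $(\sB^3_i,\sB^3_3)$ problem with the interior nonlinearity, and checking positivity of the relevant kernels under reflection. If a direct Liouville argument for $P$ fails, I would first try to prove $u(X)=O(|X|^{3-n})$ via a Kelvin transform plus regularity at the origin, with $\varepsilon$-regularity driven by the finite-volume conditions.
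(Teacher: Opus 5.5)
Your architecture matches the paper's. Both use a Green--Poisson representation, moving spheres centered on $\pa\R^{n+1}_+$ for the coupled interior/trace system, Li's calculus lemma on the trace, $a=\al$ from the boundary volume, and radial symmetry of the pullback via reflections. For the last step you should record that $\bar\lam(y)^2=\var^2+|y-x_0|^2$, read off from the explicit trace; this is what makes the conjugated inversions reflections through hyperplanes containing the origin.

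Your exclusion of $\bar\lam(x_0)=\infty$ is valid and slightly shorter than the paper's. If $u_{X_0,\lam}\le u$ for all $\lam$, then $r\mapsto r^{\frac{n-3}{2}}u(X_0+r\theta)$ is nondecreasing. Using $u>0$ on the unit half-sphere around $X_0$, this gives $u\gtrsim r^{-\frac{n-3}{2}}$ and hence $\int u^{\frac{2(n+1)}{n-3}}=+\infty$. The paper instead rules out the case where all $\bar\lam$ are infinite via Lemma \ref{lem:Technique1}. It then propagates finiteness from one center to all through the asymptotics of $|X|^{n-3}u$.

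The gap is in Step 1, the vanishing of $P$. Your instinct that integrability forces $P\equiv0$ is right, but the mechanism you state does not deliver it:
\begin{itemize}
\item Nothing you list shows that $P$ is a polynomial; that would itself be a Liouville theorem for the mixed $(\sB_i^3,\sB_3^3)$ problem.
\item $P$ is not known to be bounded below. From $u\ge0$ you only get $P\ge -v$, where $v$ is the potential part, and $v$ is not a priori bounded.
\item The sign conditions and maximum principles play no role here. The paper's representation (Proposition \ref{prop:integral-representation}) holds for all $c_i,c_3\in\R$.
\end{itemize}

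Two ingredients are missing:
\begin{itemize}
\item A weak-type bound placing the potential $v$ in $L^{q}_w(\R^{n+1}_+)$, $q=\frac{2(n+1)}{n-3}$, controlled by the critical norms of $u$ and its trace (Lemma \ref{lem:Weaknorm}). This gives $P=u-v\in L^q_w$.
\item Up-to-the-boundary Agmon--Douglis--Nirenberg estimates for the homogeneous $(\sB_i^3,\sB_3^3)$ problem. These are available because these pairs satisfy the complementing condition (Lemma \ref{lem:complementing-condition}).
\end{itemize}
The paper applies these estimates on annuli to the Kelvin transform $P_{0,1}\in L^p(\cB_1^+)$, for suitable $p<q$. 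This yields $P(X)=O(|X|^{3-n+\frac{n+1}{p}})\to0$, and gradient estimates on large half-balls then give $P\equiv0$. You could equally apply them directly on $\cB_R^+$:
$$\sup_{\cB_{R/2}^+}|P|\le CR^{-(n+1)}\|P\|_{L^1(\cB_R^+)}\le CR^{-\frac{n-3}{2}}\|P\|_{L^q_w}\to0.$$
Your fallback of Kelvin-transforming $u$ itself would need a nonlinear removable-singularity result at a boundary point. That is harder than running the linear argument on $P=u-v$.

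Two smaller points in Step 2. The Poisson terms need Gluck's boundary-to-half-space HLS inequalities (Lemma \ref{lem:Gluck}), not classical HLS. For $i=2$, the reflection inequality for $G^{(2,3)}$ is not automatic; the paper proves it by a monotonicity argument.
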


		\begin{rem}
		The sign assumptions on the curvature constants are motivated by the sign
structure of the biharmonic Poisson kernels associated with the boundary
operators $\sB_k^3$. More precisely, let $P_k^3$ denote the Poisson kernel
corresponding to the $\sB_k^3$-boundary datum. Then Lemma
\ref{lem:Poissonformula} gives
\[
P^3_1<0\quad\text{ in }\,\R^{n+1}_+,\quad P^3_2<0\quad\text{ in }\,\R^{n+1}_+,\quad \text{ and }\quad   P^3_3>0 \quad\text{ in }\,\R^{n+1}_+.
\]
This sign structure also explains the lack of a direct comparison principle:
even for smooth functions with sufficient decay at infinity,
\[
\Delta^2 u\geq 0 \quad \text{ in }\,\R_+^{n+1},\quad \sB^3_i(u),\, \sB_3^3(u)\geq 0 ~ \text{ on }\, \pa \R^{n+1}_{+} \]does not in general imply $u\ge0$. Accordingly, the assumptions \(c_i\leq0\), \(i\in\{1,2\}\), and
\(c_3\geq0\) in \eqref{maineq2} are compatible with the positivity argument
based on the Green and Poisson representations; see Lemma \ref{lem:Start}.
\end{rem}

Theorem \ref{thm:main0} reduces the classification problem for \eqref{maineq2} to the study of the radial boundary value problem \eqref{BVP_U-1} on the unit ball. The next natural  question is therefore whether such positive radial solutions actually exist. In general, the uniqueness and explicit description of solutions to \eqref{BVP_U-1} appear to be difficult, especially when \(c_1\ne0\).  
		We nevertheless obtain the following partial existence result for small boundary volume.
		\begin{thm}\label{thm:Existence}
			The existence of positive radial solutions to \eqref{BVP_U-1} holds in the following cases.
			\begin{itemize}
				\item[(1)]For $i=1$ and $c_1\leq 0$. If \be
				\label{alphaAssumption}
				0<\al<
				\Big(
				1+\frac{4c_1^2}{(n-3)^2}
				\Big)^{-\frac{n-3}{4}}, 
				\ee
				then there exist a positive smooth radial solution
				$U_{\al }\in C^\infty(\overline{\B^{n+1}})$ and a constant \(c_3\in\R \) such that $U_{\al }$ solves \eqref{BVP_U-1}.
				Moreover, \(c_3\) satisfies
				\be\label{c3bdd}
				\frac{(n^2-1)(n-3)}{4} \al^{-\frac6{n-3}}	L(\al,c_1)	\leq c_3
				\leq 
				\frac{(n^2-1)(n-3)}{4} \al^{-\frac6{n-3}},
				\ee 
				where
				\be\label{L}
				L(\al,c_1) :=
				1-
				\al^{\frac8{n-3}}
				\Big(
				1-\frac{4c_1^2\al^{\frac4{n-3}}}{(n-3)^2}
				\Big)^{-2}
				\Big[
				1-
				\frac{c_1\al^{\frac2{n-3}}}{n-3}
				\Big(
				3-\frac{4c_1^2\al^{\frac4{n-3}}}{(n-3)^2}
				\Big)
				\Big].
				\ee
				\item[(2)]For $i=2$ and $c_2\leq 0$.  There exist \(\al _0>0\) and a
				family of positive smooth radial solutions
				\(\{U_\al \}_{0<\al <\al _0}\) of \eqref{BVP_U-1}. The corresponding constants \(c_3=c_3(\al )\) satisfy
				\[
                c_3(\al )=\frac{(n^2-1)(n-3)}{4}\al ^{1-p^*_3}+O(\al ^{p^*-p^*_3})\quad \text{ as }\, \al \to 0^{+}.
                \] 
                \end{itemize}	
		\end{thm}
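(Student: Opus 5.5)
The boundary condition $\sB^3_3(U)=c_3\al^{p_3^*}$ imposes no extra constraint in the radial class: $\sB^3_3(U)$ is constant on $\Sn$, and we simply \emph{define} $c_3$ as this constant divided by $\al^{p_3^*}$. So the task is to produce a positive radial solution of $\Delta^2U=\kappa U^{p^*}$ in $\B^{n+1}$ with the two conditions $U=\al$ and $\sB^3_i(U)=c_i\al^{p_i^*}$ on $\Sn$. The only other work is to estimate the resulting $c_3$.

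For part (1) I would use the Dirichlet reformulation. On $\Sn$, $\sB^3_1(U)=\pa_\nu U+\frac{n-3}{2}U$ in the ball picture, so the boundary conditions read
\[
U=\al,\qquad \pa_\nu U=\beta:=c_1\al^{p_1^*}-\tfrac{n-3}{2}\al<0 .
\]
This is a Dirichlet problem for $\Delta^2$, and Boggio's theorem says the Dirichlet Green function $G$ of the ball is positive. Writing $h$ for the biharmonic function with data $(\al,\beta)$, which is radial, explicit, and positive, the problem becomes the fixed-point equation
\[
U=T(U):=h+\kappa\int_{\B^{n+1}}G(\cdot,y)\,U^{p^*}(y)\,\d y .
\]
Here $T$ is monotone on nonnegative functions, so monotone iteration between an ordered sub- and supersolution gives a solution; radial symmetry is preserved at each step.

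The subsolution is $\underline U=h$. For the supersolution I would take scaled bubbles $\overline U=\lambda w_\var$, where
\[
w_\var(x)=\Big(\frac{2\var}{\var^2+|x|^2}\Big)^{\frac{n-3}{2}},\qquad \lambda\in(0,1].
\]
Since $\lambda\le1$, $\Delta^2(\lambda w_\var)=\lambda\kappa w_\var^{p^*}\ge\kappa(\lambda w_\var)^{p^*}$. The difference $\overline U-T(\overline U)$ is then a nonnegative Green potential plus a biharmonic function $d$. If $\overline U=\al$ on $\Sn$, then $d=0$ there, and positivity of the Dirichlet Poisson kernel for the $-\pa_\nu$ datum gives $d\ge0$ as soon as $-\pa_\nu\overline U\ge-\beta$. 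Hence I need to choose $(\var,\lambda)$ with
\[
\lambda\Big(\frac{2\var}{1+\var^2}\Big)^{\frac{n-3}{2}}=\al,\qquad \frac{n-3}{1+\var^2}\,\al\;\ge\;\frac{n-3}{2}\,\al-c_1\al^{p_1^*},\qquad \lambda\le1 .
\]
Eliminating $\lambda$ reduces this to an inequality in $\var$ alone. I expect that the admissible range of $\var$ is nonempty exactly under \eqref{alphaAssumption}, which is where the factor $1+4c_1^2/(n-3)^2$ should appear. The ordering $h\le\lambda w_\var$ follows from the same comparison applied to the biharmonic part.

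To bound $c_3$, I would use that $\underline U\le U\le\overline U$ with equal boundary values and equal normal derivatives. Evaluating $\sB^3_3(U)$ requires a third-order normal derivative, which is not directly controlled by the pointwise sandwich. Instead I would integrate the radial equation: $\int_{\B^{n+1}}\Delta^2U=|\Sn|\,\pa_r\Delta U(1)$. This expresses $\pa_r\Delta U(1)$, and hence $c_3$, through $\int U^{p^*}$, which is then sandwiched between the corresponding integrals for $h$ and for $\lambda w_\var$. The upper bound $\frac{(n^2-1)(n-3)}{4}\al^{-6/(n-3)}$ should come from the bubble, and the lower bound $L(\al,c_1)$ from the optimal admissible $\var$. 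I expect this bookkeeping, and the precise admissibility computation that produces \eqref{alphaAssumption}, to be the main obstacle.

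For part (2) ($i=2$), the conditions $U=\al$ and $\pa_t^2U-\Delta_{\Rn}U=c_2\al^{p_2^*}$ are no longer of Dirichlet type, and Boggio positivity is not available. I would instead argue by perturbation as $\al\to0^+$. Rescale $U=\al\,(1+\phi)$ around the small-amplitude profile: as $\al\to0$ the nonlinearity $\kappa U^{p^*}$ is of order $\al^{p^*}$, which is much smaller than $\al$. The leading-order problem is therefore the linear biharmonic problem with data $(1,\,c_2\al^{p_2^*-1})$, which has an explicit radial solution. I would then solve for the correction by a contraction mapping in $C^{4,\gamma}_{\mathrm{rad}}(\overline{\B^{n+1}})$, using invertibility of the linear radial problem with boundary operators $(\sB^3_0,\sB^3_2)$. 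This invertibility holds because this pair satisfies the complementing condition, unlike $(\sB^3_1,\sB^3_2)$ by Lemma \ref{lem:complementing-condition}. Positivity follows for $\al<\al_0$. Finally, the expansion of $c_3(\al)$ comes from computing $\sB^3_3$ on the leading profile, which gives $\frac{(n^2-1)(n-3)}{4}\al^{1-p_3^*}$, with an error of order $\al^{p^*-p_3^*}$ from the nonlinear correction.
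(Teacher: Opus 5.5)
\textbf{Where the proposal agrees with the paper.} Your existence argument in part (1) is the paper's. Optimizing your two admissibility conditions forces equality in the normal-derivative inequality, i.e.\ $\frac{\var^2-1}{\var^2+1}=\frac{2c_1\al^{2/(n-3)}}{n-3}$. This is the paper's choice \eqref{lamchoice}, and $\lambda\le 1$ then becomes \eqref{alphaAssumption}. Part (2) is also the paper's contraction around the explicit biharmonic profile. A small correction there: the complementing condition gives estimates, not injectivity. In the radial class, however, a biharmonic $A+Br^2$ with zero $\sB_0^3$- and $\sB_2^3$-data is trivially zero, so this is easily fixed.

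\textbf{The gap is in the estimate \eqref{c3bdd}.} For radial $U$ with $U(1)=\al$ and $U_r(1)=\beta$, \eqref{Boundaryoperators-Ball} gives
\[
\sB_3^3U\big|_{\Sn}=-\pa_r\Delta U(1)-\frac{n-3}{2}U_{rr}(1)+\frac{n-3}{2}\beta+\frac{(n^2-1)(n-3)}{4}\al .
\]
So $c_3$ also depends on $U_{rr}(1)$. This quantity is neither fixed by the boundary data nor determined by $\int_{\B^{n+1}}U^{p^*}$. Hence your step ``this expresses $\pa_r\Delta U(1)$, and hence $c_3$'' fails.

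You could try to control $U_{rr}(1)$ pointwise, by Taylor expansion at $r=1$ of $h\le U\le \lambda w_\var$ (these have equal Cauchy data at the optimal $\var$). That recovers the upper bound. But the lower bound it gives is strictly weaker than $L(\al,c_1)$, because the second-order term enters with the factor $\lambda$ rather than $\lambda^{p^*}$. Note also that you have the attribution reversed: the upper bound comes from the positivity of the correction, not from the bubble.

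\textbf{The missing ingredient} is that $U_{rr}(1)$ is itself a weighted integral of $\Delta^2U$. Write $U=h+Z$ and $\zeta=\Delta Z$. Since $Z(1)=Z'(1)=0$, the identity $0=Z'(1)=\int_0^1 r^n\zeta\,\d r$ yields
\[
Z_{rr}(1)=\zeta(1)=\int_0^1 s^n\tfrac{1-s^2}{2}\,\Delta^2Z\,\d s,\qquad \zeta'(1)=\int_0^1 s^n\Delta^2Z\,\d s .
\]
Together with $\sB_3^3h=\frac{(n^2-1)(n-3)}{4}\al$, this gives
\[
\sB_3^3U=\frac{(n^2-1)(n-3)}{4}\al-\kap\int_0^1 s^nH(s)\,U^{p^*}(s)\,\d s ,
\]
with $H$ as in \eqref{Hr}.

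The upper bound in \eqref{c3bdd} now follows from $U^{p^*}\ge 0$. For the lower bound, let $h_\var$ be the biharmonic function with the Cauchy data of $w_\var$. Use $U^{p^*}\le \lambda^{p^*}w_\var^{p^*}$ together with the same identity applied to $w_\var-h_\var$; this produces exactly $L(\al,c_1)$.

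The paper encodes this step as Lemma \ref{lem:Comparison}. There it is applied to $\overline U-U$, compared against multiples of the bubble minus its biharmonic lifting.
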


       To compare the solutions in Theorem \ref{thm:Existence} with the standard
conformal model, and in particular to clarify the threshold in
\eqref{alphaAssumption}, we introduce the radial Aubin--Talenti family
centered at the origin:
\be\label{bubble}
U_{0,\lam}(r)
=
\Big(
\frac{2\lam}{\lam^2+r^2}
\Big)^{\frac{n-3}{2}},
\quad \lam>0.
\ee
The choice $\lam=1$ gives the standard bubble
\be\label{standardbubble}
U_*(r)
=
\Big(
\frac{2}{1+r^2}
\Big)^{\frac{n-3}{2}}.
\ee
The conformal metric associated with $U_*$ has constant
$Q$-curvature $Q_{\S^{n+1}}$, vanishing $\sT_3^3$-curvature, and
minimal boundary.

		\begin{rem}
		In both cases of Theorem \ref{thm:Existence}, one has \(c_3(\al )>0\) for all sufficiently small $\al>0$. The threshold
\eqref{alphaAssumption} is also sharp with respect to the
Aubin--Talenti family. Indeed, suppose that there exists a bubble
$U_{0,\lam}$ such that  $U_{0,\lam}(1)=\al $  and $\sB^3_1U_{0,\lam}=c_1U^{p^*_1}_{0,\lam}$ on $\Sn$. Then necessarily
			\[
			\al =
			\Big(
			1+\frac{4c_1^2}{(n-3)^2}
			\Big)^{-\frac{n-3}{4}}.
			\]
			Thus \eqref{alphaAssumption} requires the prescribed boundary value
$\al$ to lie strictly below the boundary height of the
Aubin--Talenti bubble with the same mean-curvature parameter $c_1$.
		\end{rem}

		The preceding result gives only a partial existence theory for general
boundary parameters. In the minimal-boundary case \(c_1=0\), however, the
radial problem \eqref{BVP_U-1} enjoys a stronger uniqueness property, which
allows us to obtain a complete classification of finite-volume solutions of
\eqref{maineq2}.

	To state the result, we introduce the \(Q\)-flat comparison profile
associated with the constant boundary value \(1\) and the homogeneous
\(\sB_1^3\)-condition:
		\be\label{Hr}
		H(r)=1+\frac{(n-3)}{4}(1-r^2),\quad 0\leq r\leq 1.
		\ee

We can now state our main classification theorem.
		\begin{thm}\label{thm:Main}
			Let $n \geq 4$, $c_1=0$,  and $c_3 \in \R_{+}$.  Assume that $u\in C^4(\overline{\R^{n+1}_+})$ is a nonnegative  solution of \eqref{maineq2} satisfying the finite-volume conditions  \eqref{Finitevolume}. If $u\not\equiv 0$, then $u>0$ in $\overline{\R^{n+1}_+}$, and the following
classification holds.
			\begin{itemize}
				\item If $\al\geq 1$, then no positive solution exists.

				\item If $\al\in (0,1)$, then there exists a unique continuous strictly decreasing map
				\[\al :(0,+\infty)\to (0,1), \quad c_3\mapsto \al (c_3),\]
				such that
\[
\lim_{c_3\to+\infty}\al (c_3)=0 \quad\text{ and }\quad \lim_{c_3\to 0^{+}}\al (c_3)=1.
\]
				Then there exist $\var>0$ and $x_0\in\Rn$ such that the boundary trace takes the form
\[
u(x,0)=\al \Big(\frac{2\var}{\var^2+|x-x_0|^2}\Big)^{\frac{n-3}{2}}.
\]
				More importantly, the corresponding solution is expressed as
\[
u(x,t)=\Big(\frac{2\var}{(\var+t)^2+|x-x_0|^2}\Big)^{\frac{n-3}{2}} U_{\al (c_3)}\circ \mathcal S\Big(\frac{x-x_0}{\var},\frac{t}{\var}\Big),
\]
					where $\mathcal S$ is the conformal M\"obius transformation defined in \eqref{conformalmapS} and  $U_{\al }=U_{\al (c_3)}$ is the \textbf{unique} positive radial solution of
				\be\label{Intro:ODE}
				\left\{\begin{aligned}
					&\Delta^2 U_{\al }=\kap U_{\al }^{p^{*}}&&\text{ in }\,  \B^{n+1},\\
					& U_{\al }=\al ,~ \sB^3_1(U_{\al })=0&&\text{ on }\,\Sn,\\
					&\sB^3_3(U_{\al })=c_3U^{p_3^*}_{\al }&&\text{ on }\, \Sn .
				\end{aligned}\right.
				\ee
				Furthermore, $ U_{\al }$ is increasing with respect to $\al $ and satisfies the estimates
				\be\label{Intro-U-estimate}
				\al  H\leq U_{\al }\leq \al  U_*,\quad  \forall\, \al \in (0,1); \qquad U_\al =\al  H+O(\al ^{p^*})\quad\text{ as }\, \al \to 0^+;
				\ee
				where $ U_{*}$ and $H$ are given by \eqref{standardbubble} and \eqref{Hr}, respectively.
			\end{itemize}
		\end{thm}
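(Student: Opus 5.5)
By Theorem \ref{thm:main0} with $i=1$ and $c_1=0$, either $u\equiv0$ or $u>0$ on $\overline{\R^{n+1}_+}$. In the latter case the boundary trace is $\al$ times a bubble, and $u$ is the pull-back under the Möbius map $\mathcal S$ of a positive radial solution $U$ of \eqref{Intro:ODE}. So the whole classification reduces to the radial problem on $\B^{n+1}$ with data $U(1)=\al$, $U'(1)=0$ (this is $\sB^3_1=0$ on $\Sn$ in radial form), and $\sB^3_3U(1)=c_3\al^{p_3^*}$. The plan has three parts: a priori bounds, a uniqueness/monotonicity statement in $\al$, and a study of the map $\al\mapsto c_3$.

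\emph{Step 1 (a priori bounds and $\al<1$).} I would write $U=\al H+w$, where $\al H$ is the $Q$-flat profile with $\al H(1)=\al$, $(\al H)'(1)=0$, and $\Delta(\al H)$ constant. The remainder $w$ then satisfies $\Delta^2w=\kap U^{p^*}\ge0$ with $w(1)=w'(1)=0$. The Green function of the clamped (Dirichlet) biharmonic problem on the ball is positive (Boggio), so $w\ge0$, which gives $U\ge\al H$.

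For the upper bound I would compare with the bubble family. Note that $\al U_*$ satisfies $\Delta^2(\al U_*)=\al^{1-p^*}\kap(\al U_*)^{p^*}\ge\kap(\al U_*)^{p^*}$ when $\al\le1$, together with the same clamped data. A moving-sphere or Boggio-type comparison argument on $\al U_*-U$ should then give $U\le\al U_*$.

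Nonexistence for $\al\ge1$ I would derive from a Pohozaev identity on the ball, or from testing the equation against the first clamped eigenfunction-like function $U_*$. Integrating $\Delta^2U\cdot U_*-U\,\Delta^2U_*$ by parts leaves only boundary terms involving $\sB^3_3$, since $U'(1)=U_*'(1)=0$ and $\sB^3_3U_*=0$. This yields
\[
\int_{\B}\kap\,(U^{p^*}U_*-U_*^{p^*}U)=-c_3\al^{p_3^*}|\Sn|<0
\]
up to constants. Combined with the ordering of $U$ and $U_*$, this forces $\al<1$.

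\emph{Step 2 (uniqueness and monotonicity).} This is where I expect the main obstacle to lie. The key observation is that in the clamped problem the datum $c_3$ is free: for each $\al$ one solves $\Delta^2U=\kap U^{p^*}$, $U(1)=\al$, $U'(1)=0$, and \emph{reads off} $c_3$. So I would show that the clamped problem has a unique positive radial solution for every $\al\in(0,1)$. Existence comes from sub-/supersolutions $\al H\le U\le\al U_*$ together with Boggio monotone iteration. For uniqueness, suppose $U_1\ne U_2$ are two solutions. The difference satisfies the linearized clamped problem with nonnegative Green function, and this should force the two solutions to be ordered. Then one can either compare the minimal solution (obtained from the iteration) with another solution via the rescaling $U^\lam(x)=\lam^{(n-3)/2}U(\lam x)$ in the moving-sphere spirit, or exploit that the linearization at the minimal solution is positive because its first clamped eigenvalue lies above the bubble's. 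The same iteration gives that $U_\al$ is increasing in $\al$, and the expansion $U_\al=\al H+O(\al^{p^*})$ follows from $w=\kap G*U^{p^*}\le C\al^{p^*}$.

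\emph{Step 3 (the map $c_3(\al)$).} From the unique $U_\al$, the relation $c_3(\al)=\al^{-p_3^*}\sB^3_3U_\al(1)$ is continuous in $\al$. As $\al\to0$, $c_3\sim\frac{(n^2-1)(n-3)}{4}\al^{1-p_3^*}\to+\infty$, using the $H$-computation. As $\al\to1$, $U_\al\to U_*$, so $c_3\to0$. Strict monotonicity I would obtain by differentiating in $\al$: the derivative $\dot U=\partial_\al U_\al$ solves the linearized clamped problem, and the identity from Step 1 (with $U_*$ replaced by $U_\al$ and $\dot U$) shows that $\sB^3_3$ of $\al^{-p_3^*}U_\al$ decreases. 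Inverting the resulting monotone bijection $(0,1)\to(0,\infty)$ gives the decreasing map $c_3\mapsto\al(c_3)$ with the stated limits.
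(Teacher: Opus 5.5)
Your reduction via Theorem~\ref{thm:main0}, the lower bound $U\ge\al H$, existence by monotone iteration, and the derivative argument for the monotonicity of $c_3(\al)$ are sound. Step~2, however, has a genuine gap.

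The theorem only claims uniqueness among positive radial solutions with $c_3>0$, and the paper uses this sign essentially. You instead try to prove uniqueness for the clamped problem $U|_{\Sn}=\al$, $\sB^3_1U=0$ alone, and none of your mechanisms delivers it. Every solution does lie above the minimal one, since the iteration from $\al H$ stays below it. But for a second solution $U_2\ge U_\al$, convexity of $s\mapsto s^{p^*}$ only gives $U_2-U_\al\ge\mathscr L(U_2-U_\al)$, where $\mathscr L=\bar\cG^{(0,1)}(\kap p^*U_\al^{p^*-1}\,\cdot)$. This is perfectly compatible with $r(\mathscr L)<1$, so stability of the minimal solution does not exclude a larger one. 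The dilation $\lam^{(n-3)/2}U(\lam x)$ does not even preserve the boundary data on $\Sn$. For a convex critical problem with small data one expects a second radial solution concentrating at the origin (as in Tarantello-type multiplicity results). By the paper's results, any solution other than $U_\al$ must have $c_3\le0$, so the hypothesis $c_3>0$ cannot be dropped. For the same reason, your Step~1 bound $U\le\al U_*$ is unjustified for arbitrary solutions: the comparison on $\al U_*-U$ is circular, because $\kap(\al U_*^{p^*}-U^{p^*})$ has no a priori sign. The bound holds only for the constructed minimal solution.

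The missing idea is the paper's ODE argument. Set $v(t)=e^{-at}U(e^{-t})$ with $a=\frac{n-3}{2}$, $b=\frac{n+1}{2}$. This gives $v''''-(a^2+b^2)v''+a^2b^2v=\kap v^{p^*}$ with $v(0)=\al$, $v'(0)=0$, $v'''(0)=c_3\al^{p_3^*}$, and $v\to0$. The conserved Hamiltonian vanishes, which pins $v''(0)=\pm h(\al)$. A shooting comparison based on the factorization $(\pa_t^2-a^2)(\pa_t^2-b^2)$ then gives at most one decaying solution for each sign, and excludes the sign $+$ when $v'''(0)>0$.

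The nonexistence for $\al\ge1$ is also incomplete. Your identity $\kap\int_{\B^{n+1}}(U^{p^*}U_*-U_*^{p^*}U)=-c_3\al^{p_3^*}|\Sn|$ is correct, but not for the reason you give. The condition $\sB^3_1U=0$ means $U'(1)=-\frac{n-3}{2}\al$, not $U'(1)=0$; likewise $H'(1)=-\frac{n-3}{2}$. The boundary terms cancel because both functions satisfy $\sB^3_1=0$. To reach a contradiction you need $U\ge U_*$ when $\al\ge1$, and no such ordering is established: your only ordering goes the other way and is stated for $\al\le1$. A Pohozaev identity alone, i.e.\ $\mathcal H\equiv0$, only yields $\al\le\al_*$ with $\al_*>1$.

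The ordering would follow if you proved that $U_*$ is the \emph{minimal} solution of the clamped problem with boundary value $1$. One route is strict stability of $U_*$, obtained by testing against the dilation mode $\pa_\lam U_{0,\lam}|_{\lam=1}$. Then every solution with $\al\ge1$ dominates the iteration started from $H$, hence dominates $U_*$. The paper proceeds differently: it compares $v$ with the subsolution $\al(\cosh t)^{-a}$, using $v''(0)\ge-a\al$ from the Hamiltonian.

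The same missing ingredient is needed for your unproved claim $U_\al\to U_*$ as $\al\to1^-$. The paper obtains this from the $\al=1$ nonexistence together with the reflection and Wei--Xu argument for $c_3=0$.
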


		\begin{rem}\label{rem:1}
			(1)	Theorem \ref{thm:Main} shows that the parameters $\al$ and $c_3$ are in
one-to-one correspondence. In particular,
			$\lim_{c_3\to 0^{+}}U_{\al (c_3)}=U_*$, 
			so the upper bound in \eqref{Intro-U-estimate} becomes sharp in the limit
$\al\to1^-$. (2) At the limiting value $c_3=0$, the solution admits an even reflection
			across the boundary and therefore extends to a positive solution
on the whole space $\R^{n+1}$. Such entire solutions are classified by
Xu and Wei \cite{WX1999} and are necessarily standard bubbles.
		\end{rem}
        Theorem \ref{thm:Main} also reveals a qualitative difference from the
second-order problem \eqref{LZ1}.
\begin{rem}
 For the normalized second-order problem \eqref{LZ1}, the classification of
Li and Zhu \cite{LZ1995} implies that the corresponding boundary-volume
parameter is fixed, namely, \(\alpha=1\). In contrast, for the fourth-order
problem, \(\alpha\) is not fixed by the normalization and provides an
additional parameter for the family of solutions; equivalently, it is in
one-to-one correspondence with \(c_3\). This additional degree of freedom is
related to the fact that the radial fourth-order equation requires more
boundary data than its second-order counterpart; see Remark \ref{rem:Key}.
For \(c_3>0\), or equivalently \(\alpha\in(0,1)\), the corresponding
solutions of \eqref{Intro:ODE} are not Aubin--Talenti bubbles.
\end{rem}

Transferring Theorem \ref{thm:Main} to the unit ball via the conformal
M\"obius transformation
\(\mathcal S:\R^{n+1}_+\to\B^{n+1}\) defined in
\eqref{conformalmapS}, we obtain the following geometric classification.
\begin{cor}\label{cor:ball-main}
			Let \(n\geq4\), and let \(g\) be a smooth metric on
\(\overline{\mathbb B^{n+1}}\) conformal to the Euclidean metric. Suppose that $Q_g = Q_{\mathbb S^{n + 1}}$, $(\sT^3_3)_g = \frac{2}{n - 3}c_3> 0$ and the mean curvature $H_g = 0$. Then there exists a conformal diffeomorphism
\(\Phi:\mathbb B^{n+1}\to\mathbb B^{n+1}\) such that $g = \Phi^*(U_\alpha^{\frac{4}{n - 3}} g_{\mathbb B^{n+1}})$, where $U_\alpha$ is the unique radial solution of \eqref{Intro:ODE}. 
		\end{cor}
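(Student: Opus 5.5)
The plan is to pull the problem back to the upper half-space, apply Theorem \ref{thm:Main}, and push the answer back to the ball. Since $g$ is a smooth metric on $\overline{\B^{n+1}}$ conformal to the Euclidean one, write $g=V^{\frac{4}{n-3}}|\d x|^2$ with $V>0$ smooth on $\overline{\B^{n+1}}$. Set
\[
u(x,t)=\Big(\frac{2}{(1+t)^2+|x|^2}\Big)^{\frac{n-3}{2}}V\circ\mathcal S(x,t),
\]
which is the inverse of the transformation used in Theorem \ref{thm:Main} with $\var=1$, $x_0=0$. Conformal covariance of the Paneitz operator and of the boundary operators $\sB^3_k$ (Section \ref{sec:2.1}) under the M\"obius map $\mathcal S$ converts the curvature conditions into \eqref{maineq2}. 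Concretely, $Q_g=Q_{\S^{n+1}}$ gives $\Delta^2u=\kap u^{p^*}$. The condition $H_g=0$, i.e.\ vanishing $\sT^3_1$-curvature up to a constant factor, gives $\sB^3_1(u)=0$, so $c_1=0$. The condition $(\sT^3_3)_g=\frac{2}{n-3}c_3$ gives $\sB^3_3(u)=c_3u^{p_3^*}$. Here I use that the background $T$- and $Q$-curvatures of the flat half-space vanish, and that $(\sT^3_3)$ of the round ball metric restricted to $\Sn$ is accounted for in the transformation law. The step needing care is this translation of $H_g=0$ into $\sB^3_1(u)=0$ and of the $\sT^3_3$ condition into the nonlinear $\sB^3_3$ equation. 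It should follow from the conformal transformation rules \eqref{Tcurvatures} once the metric $g_u$ is identified with $(\mathcal S^{-1})^*g$.

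Next I check the hypotheses of Theorem \ref{thm:Main}. Smoothness of $V$ up to the boundary gives $u\in C^4(\overline{\R^{n+1}_+})$ and $u>0$. The finite-volume conditions \eqref{Finitevolume} hold because the two integrals equal $\mathrm{Vol}_g(\B^{n+1})$ and $\mathrm{Vol}_g(\Sn)/|\Sn|$ by conformal invariance of the volume forms, and both are finite. Theorem \ref{thm:Main} then yields $\al=\al(c_3)\in(0,1)$, $\var>0$ and $x_0\in\Rn$ such that
\[
u(x,t)=\Big(\frac{2\var}{(\var+t)^2+|x-x_0|^2}\Big)^{\frac{n-3}{2}}U_{\al}\circ\mathcal S\Big(\frac{x-x_0}{\var},\frac t\var\Big),
\]
with $U_\al$ the unique radial solution of \eqref{Intro:ODE}.

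Finally I return to the ball. Let $\psi(X)=\big(\frac{X-(x_0,0)}{\var}\big)$, which is a dilation composed with a boundary translation of $\R^{n+1}_+$. Then $\Phi:=\mathcal S\circ\psi\circ\mathcal S^{-1}$ is a conformal diffeomorphism of $\B^{n+1}$, and it extends to $\overline{\B^{n+1}}$ because $\mathcal S$ maps the point at infinity to a boundary point. Both metrics $g$ and $\Phi^*(U_\al^{\frac4{n-3}}g_{\B^{n+1}})$ pull back under $\mathcal S$ to metrics on $\R^{n+1}_+$ that are conformal to $\d t^2+\d x^2$. I would compare their conformal factors using the cocycle identity for the conformal factors of compositions of M\"obius maps. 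By the formula above the two factors coincide, since the prefactor is exactly the conformal factor of $\mathcal S\circ\psi$ with respect to the Euclidean metric. Hence $g=\Phi^*(U_\al^{\frac{4}{n-3}}g_{\B^{n+1}})$, which completes the argument. The only genuinely delicate point is the bookkeeping of the conformal factors and of the boundary curvature transformation laws; everything else is a direct application of Theorem \ref{thm:Main}.
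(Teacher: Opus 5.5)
Your proposal is correct and follows the same route the paper indicates: pull $g$ back by $\mathcal S$ to obtain a positive finite-volume solution of \eqref{maineq2} with $c_1=0$, apply Theorem \ref{thm:Main}, and note that the prefactor $\big(\frac{2\var}{(\var+t)^2+|x-x_0|^2}\big)^{\frac{n-3}{2}}$ is exactly the conformal factor of $\mathcal S\circ\psi$, so $\Phi=\mathcal S\circ\psi\circ\mathcal S^{-1}$ works. The aside about the $\sT_3^3$-curvature of the round ball is unnecessary: $\mathcal S:(\R^{n+1}_+,\mathcal S^*g)\to(\B^{n+1},g)$ is an isometry, so the curvature conditions transfer verbatim to $g_u=u^{\frac{4}{n-3}}(\d t^2+\d x^2)$.
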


\subsection{Geometric remarks and organization}
Under the conformal equivalence \eqref{conformalmapS} between the unit ball
and the upper half-space, any smooth metric on $\overline{\mathbb B^{n+1}}$ conformal
to the Euclidean metric gives rise to a conformal metric satisfying the
finite-volume conditions \eqref{Finitevolume}. At the limiting value
$\alpha=1$, equivalently $c_3=0$, $U_1$ is the Aubin--Talenti bubble and $(\mathbb B^{n + 1}, U_1^{\frac{4}{n - 3}} g_{\mathbb B^{n+1}})$ is isometric to the standard hemisphere. For $\alpha\in (0,1)$, the boundary of $(\mathbb B^{n+1}, U_\alpha^{\frac{4}{n-3}} g_{\mathbb B^{n+1}})$ is totally geodesic.  However, since $c_3>0$, the metric does not admit a
smooth even reflection across the boundary.


	We next compare our results with known rigidity results for the
second-order boundary Yamabe problem and the constant $Q$-curvature
problem on closed manifolds. The classical Obata theorem \cite{O1971} asserts that an Einstein
		metric is rigid under conformal deformations to constant scalar curvature
		metrics. The corresponding boundary rigidity theorem was proved by Escobar \cite{E1990}. There are fourth-order
		analogues of this rigidity for the $Q$-curvature equation. V\'{e}tois \cite{V2024} proved an Obata-type theorem: if $(M^n,g)$ is a closed Einstein manifold with positive scalar curvature and is not conformally diffeomorphic to the round sphere, then every conformal metric with constant $Q$-curvature is a constant multiple of $g$.  
		Case \cite[Theorem 1.1]{C2024} subsequently simplified and extended this argument, showing that, for $a$ in an explicit interval containing $0$, a closed conformally Einstein manifold with nonnegative scalar curvature and constant $Q_g+a\sigma_2$ must be Einstein,   where $\sigma_2$ is the $\sigma_2$-curvature.

Motivated by these closed-manifold rigidity results, it is natural to ask
whether an analogous principle holds for fourth-order conformally
invariant boundary problems.  Starting from the
		Einstein model, one might expect that a conformal metric with constant interior $Q$-curvature together
		with constant boundary $T$-curvature should  be Einstein.  Corollary \ref{cor:ball-main} shows that this expectation is
		false. 
Although the restriction of any such conformal metric on the boundary must be Einstein, the metric is not Einstein in the interior. Indeed, the corresponding interior extension  is governed by the
		fourth-order radial problem \eqref{BVP_U-1}, for which at least two boundary
		data are needed to determine a solution; see the discussion in Section
		\ref{sec:4}. 

		To further illustrate this new family of non-Einstein metrics, we conclude
this subsection with numerical computations of \(U_\alpha\) and an extrinsic
geometric interpretation of the corresponding conformal metrics. More
precisely, we realize $(\mathbb B^{n+1}, U_\alpha ^{4/(n - 3)}g_{\mathbb B^{n +1}})$ as a rotationally symmetric hypersurface in $\mathbb R^{n + 2}$.

We first present numerical computations of the solutions \(U_\alpha\) of
\eqref{Intro:ODE} in the case \(n=4\); see Figure
\ref{fig:geometry-u-and-f}(a). At \(\alpha=1\),
$U_1(r)=(\frac{2}{1+r^2})^{\frac{n-3}{2}}$.
As \(\alpha\to0^+\), it is natural to consider the normalized solutions
\(\alpha^{-1}U_\alpha\). By \eqref{Intro-U-estimate},
\(\alpha^{-1}U_\alpha\) converges to \(H\) defined in \eqref{Hr}; see
Figure \ref{fig:geometry-u-and-f}(b).


We also  provide an extrinsic geometric interpretation of the conformal
metric $(\mathbb B^{n+1}, U_\alpha ^{4/(n - 3)}g_{\mathbb B^{n +1}})$ by realizing it isometrically as a rotationally symmetric hypersurface in $\mathbb R^{n + 2}$. 
	To this end, consider the embedding
\[
F_{\alpha}(r,\omega)=\big(\rho_{\alpha}(r)\omega,h_{\alpha}(r)\big)\in  \R^{n+2}, \quad \omega\in \S^n,
\]
where \(h_\alpha\) denotes the height function. A direct computation gives
\begin{align*}
F_{\alpha}^{*}g_{\R^{n+2}}
&=
|\ud h_{\alpha}(r)|^2+|\ud(\rho_{\alpha}(r)\omega)|^2
=
[(\rho'_{\alpha}(r))^2+(h'_{\alpha}(r))^2]\, \ud r^2
+\rho_{\alpha}^2(r)\,\ud\omega^2 .
\end{align*}
Therefore, in order to have $F_{\alpha}^{*}g_{\R^{n+2}} = U_{\alpha}^{\frac{4}{n-3}}(r)
(\ud r^2+r^2\ud\omega^2)$, we have
\[
\rho_{\alpha}(r)=rU_{\alpha}^{\frac{2}{n-3}}(r),\quad h_{\alpha}(r) = \int_r^1 \sqrt{ U_{\alpha}^{\frac{4}{n-3}}(s) - \big(\rho'_{\alpha}(s)\big)^2 }\,\ud s .
\]
At \(\alpha=1\), the image \(F_1(\mathbb B^{n+1})\) is the standard
hemisphere. As \(\alpha\to0^+\), the hypersurfaces
\(F_\alpha(\mathbb B^{n+1})\) shrink to a point. After the natural
rescaling $\alpha^{-\frac{2}{n - 3}}F_\alpha (\mathbb B^{n + 1})$, they converge to the rotationally symmetric hypersurface determined by
the limiting profile \(H\) defined in \eqref{Hr}; see Figure
\ref{fig:geometry-u-and-f}(c)--(d).
\begin{figure}[ht]
\centering
\begin{minipage}[t]{0.35\textwidth}
\centering
\includegraphics[width=\linewidth]{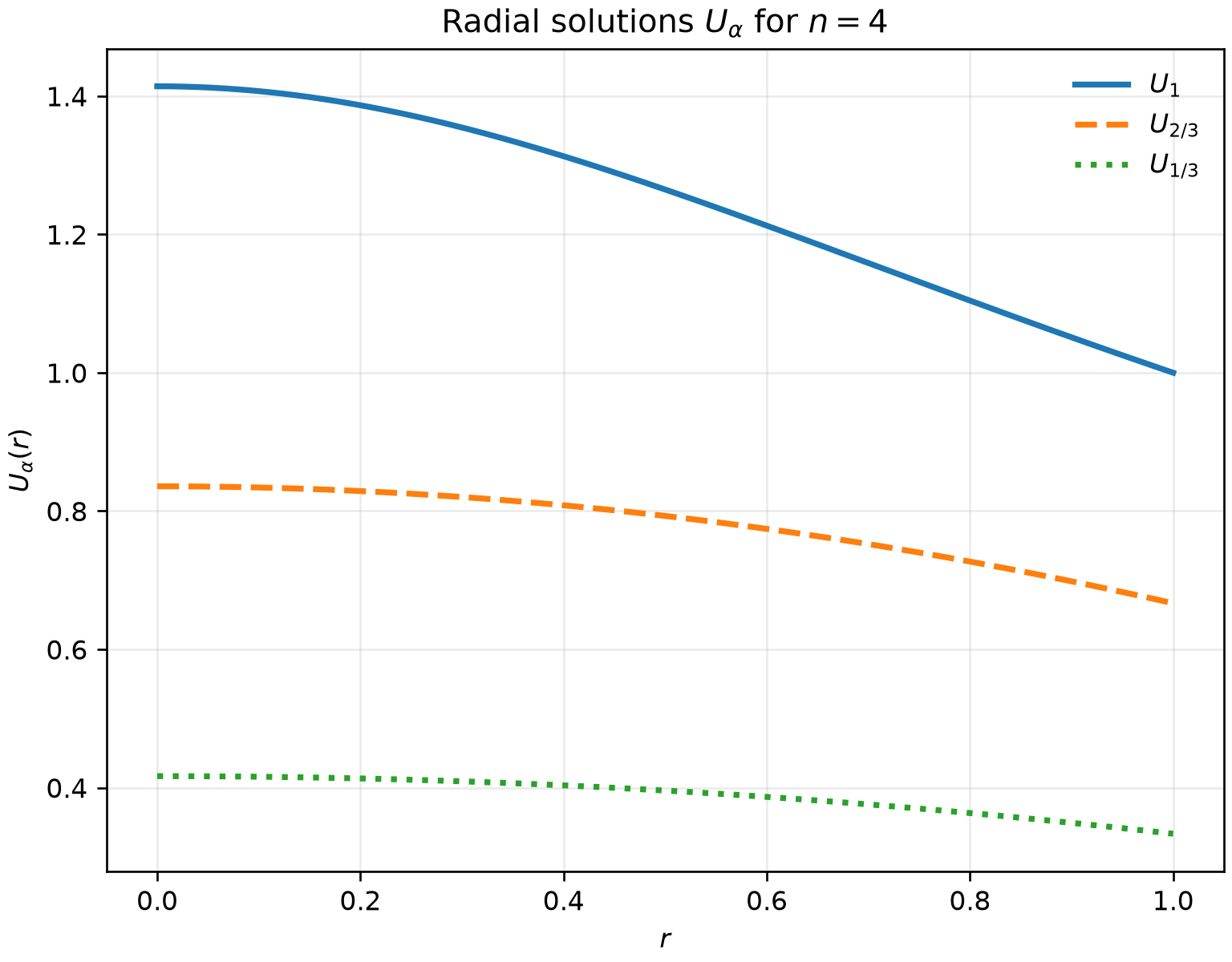}
{\small (a) Numerical radial solutions $U_\alpha$.}
\end{minipage}\hspace{0.05\textwidth}
\begin{minipage}[t]{0.35\textwidth}
\centering
\includegraphics[width=\linewidth]{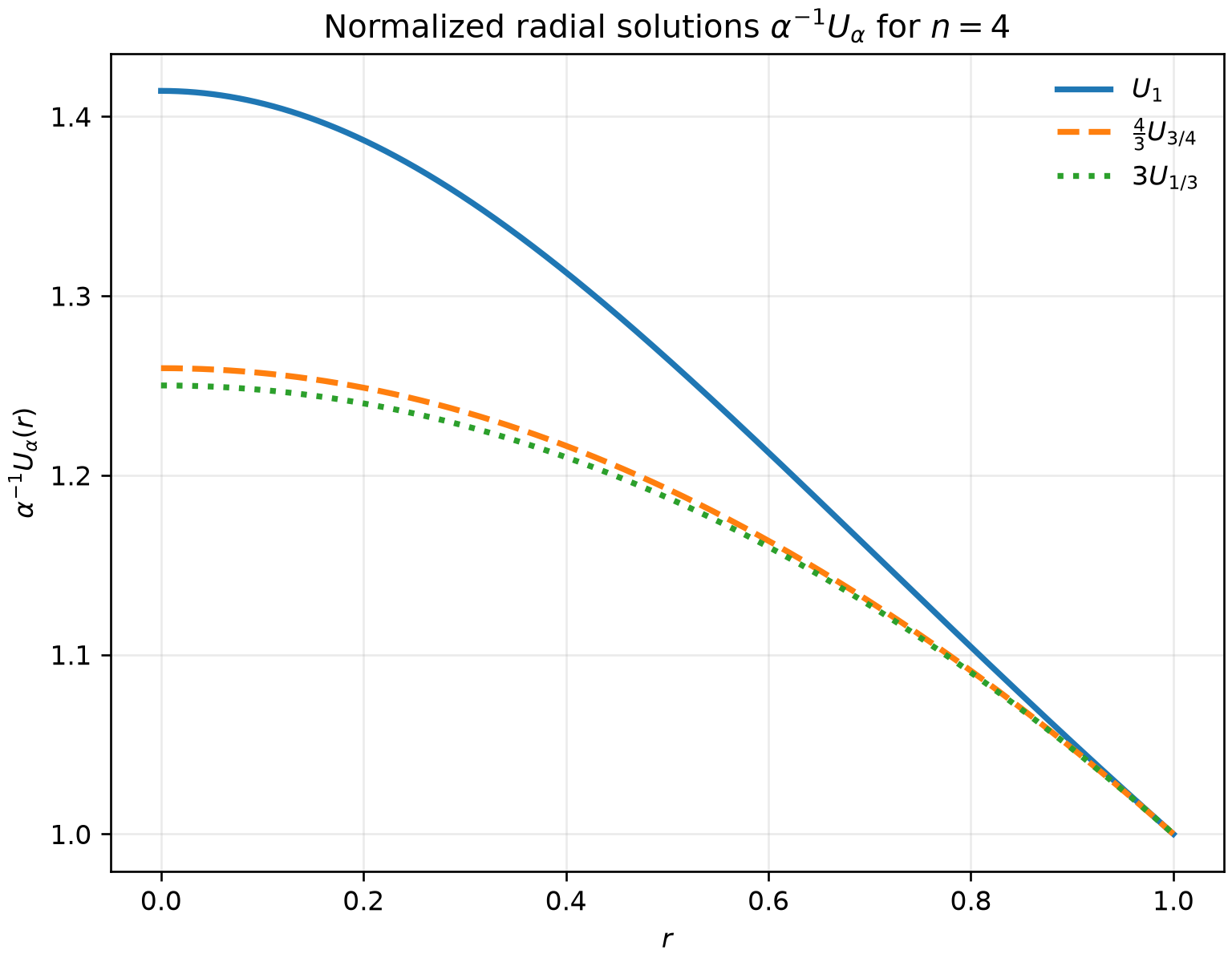}
{\small (b) Normalized radial solutions $\alpha^{-1}U_\alpha$.}
\end{minipage}
\begin{minipage}[t]{0.35\textwidth}
\centering
\includegraphics[width=\linewidth]{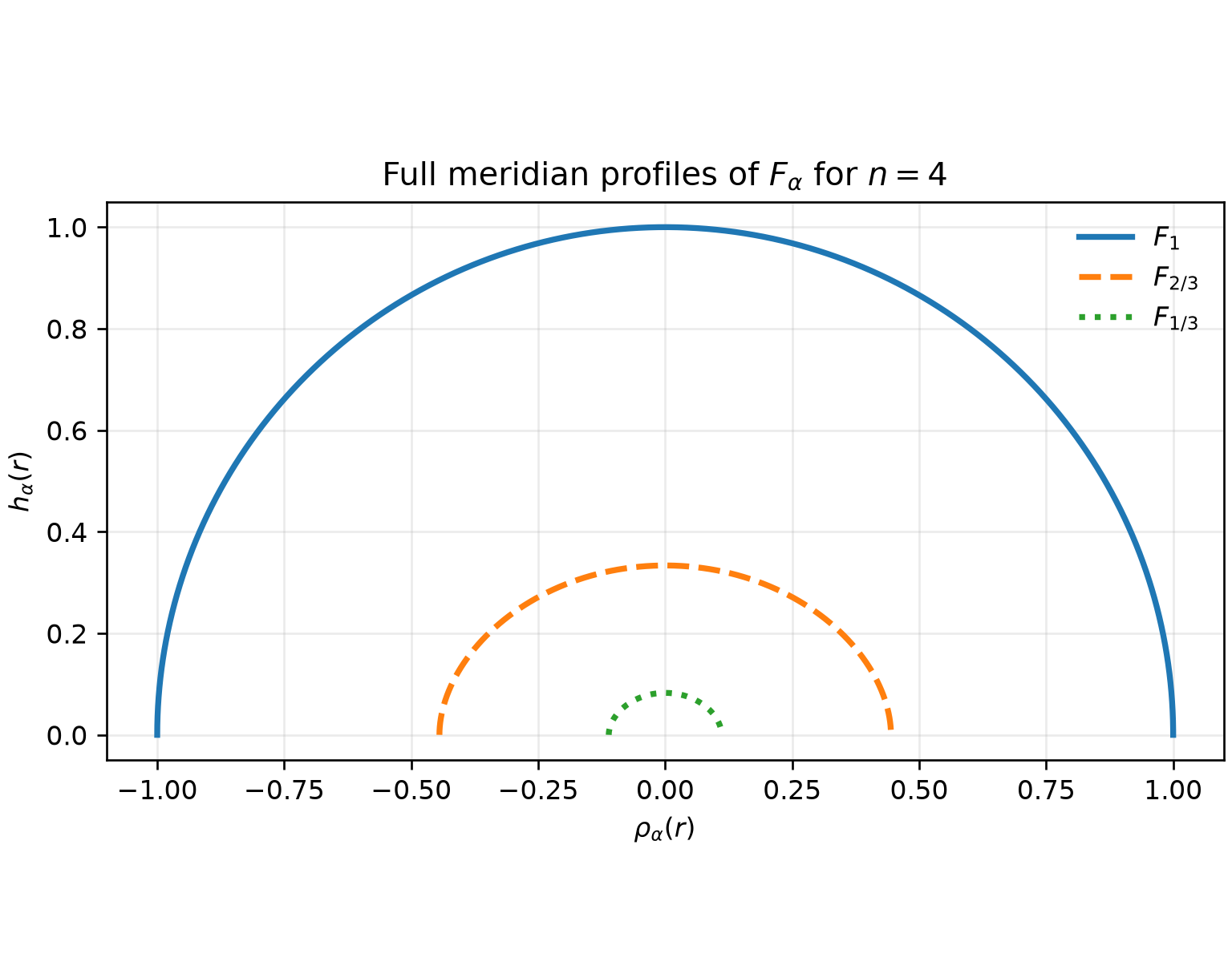}
{\small (c) Generating curves of the embeddings $F_\alpha$.}
\end{minipage}\hspace{0.05\textwidth}
\begin{minipage}[t]{0.35\textwidth}
\centering
\includegraphics[width=\linewidth]{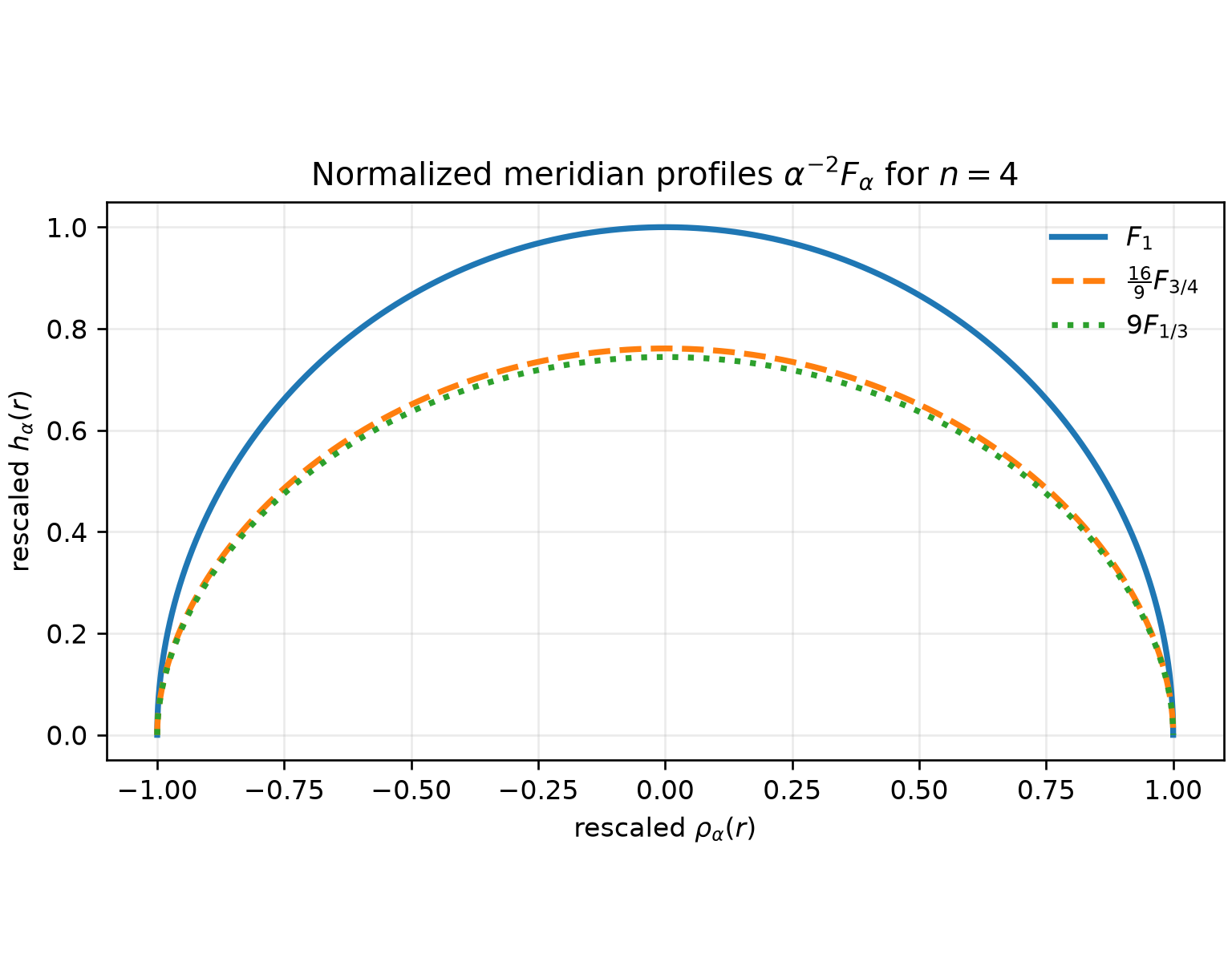}
{\small (d) Normalized generating curves $\alpha^{-2/(n-3)}F_\alpha$.}
\end{minipage}
\caption{Numerical radial solutions and generating curves of the isometric embeddings for $n=4$. Panels~(a) and~(b) show $U_\alpha$ and $\alpha^{-1}U_\alpha$, respectively. Panels~(c) and~(d) show $F_\alpha$ and $\alpha^{-2/(n-3)}F_\alpha=\alpha^{-2}F_\alpha$, respectively. In all panels, the solid, dashed, and dotted curves represent the cases $\alpha=1$, $\alpha=3/4$, and $\alpha=1/3$, respectively.}
\label{fig:geometry-u-and-f}
\end{figure}

		The paper is organized as follows. In Section \ref{sec:2}, we recall the conformal boundary operators, verify the complementing condition, and collect the associated Poisson kernel and Green function representations for biharmonic boundary value problems.  In Section \ref{sec:3}, we derive the integral representation formula and apply the method of moving spheres to prove the classification of the boundary trace, which leads to the proof of Theorem \ref{thm:main0}. In Section \ref{sec:4}, we construct radial solutions to the general boundary value problem  \eqref{BVP_U-1} and prove the existence result in Theorem \ref{thm:Existence} for sufficiently small boundary volume. Section \ref{sec:5} is devoted to the fourth-order ODE analysis on the unit ball and completes the proof of  Theorem \ref{thm:Main}.   
		Technical lemmas are collected in  Appendix \ref{app:A}.

\medskip

	\noindent\textbf{Notation.}~ We collect below the main notation used throughout the paper.
		\begin{itemize}
        \item For $n\geq 4$, we set  $p^*=\frac{n+5}{n-3}$,  	$p_k^* = \frac{n+2k-3}{n-3}$, $k \in \{1,2,3\}$, and $\kap=\frac{(n-3)(n-1)(n+1)(n+3)}{16}$.
        
			\item
			We use capital letters, such as $X=(x,t)$ and $Y=(y,s)$, to denote points in the upper half-space $\R^{n+1}_+$, where $x,y\in\R^n$ and $t,s>0$. We write
			$\d X=\d x\,\d t$, $\d Y=\d y\,\d s$, $\bar X=(x,-t)$ and  $\bar Y=(y,-s)$.
			The variables $\xi$ and $\eta$ are reserved for points in the unit ball $\mathbb{B}^{n+1}$.  We also write $\R_+=(0,+\infty)$.

			\item 	We denote by 
$\mathcal S:\mathbb R^{n+1}_+\to\mathbb B^{n+1}$ the conformal M\"obius transformation: 
  	\be\label{conformalmapS}
\mathcal S(X)=
\frac{2(X+\mathbf e_{n+1})}{|X+\mathbf e_{n+1}|^2}
-\mathbf e_{n+1},
	\ee
where $\mathbf e_{n+1}=(0,0,\cdots,1)\in \R^{n+1}$.

			\item  For $X_0 \in \R^{n+1}$,  we denote $\cB_{r}(X_0):=\{X \in \R^{n+1}:|X-X_0|<r\}$ and $\cB_{r}^{+}(X_0):=\cB_{r}(X_0) \cap \R^{n+1}_+$. If $X_0=(x_0,0)\in\pa \R ^{n+1}_+$, we write $B_r(x_0):=\{x\in\Rn:|x-x_0|<r\}$, so that $\pa '\cB_r^+(X_0)=B_r(x_0)$. More generally, for $X\in\overline{\R ^{n+1}_+}$, set $\pa '\cB_r^+(X):=\{y\in\Rn:|X-(y,0)|<r\}$.

            \item  We use the standard asymptotic notation: $g=O(f)$ means that there exists a constant $C>0$ such that
			$|g|\le C|f|$, while $g=o(f)$ means that $g/f\to0$ (in the indicated limit).
			

			\item  The symbol $C>0$ denotes a generic positive constant which may vary from line to line. We write $C(\al, \beta, \ldots)$ or $C_{\al,\beta,\ldots}$ to indicate that  the constant depends on $\al, \beta, \ldots$.
		\end{itemize}

		\section{Preliminaries}\label{sec:2}

		\subsection{Conformal boundary operators}\label{sec:2.1}
		For completeness, we recall the conformally covariant boundary operators
associated with the Paneitz operator and fix the geometric conventions used
throughout the paper. Let $(X^{n+1},g)$ be a smooth Riemannian manifold with
boundary
		$M^n=\partial X^{n+1}$, and assume throughout this section that $n\geq4$. We refer to
\cite{C2018,CZ202403,CZ202406} and the references therein for further
background.

		Let $\bar g=g|_{TM^n}$. We denote by $R_g$ and $\operatorname{Ric}_g$ the
		scalar curvature and Ricci curvature of $g$, respectively. Let $\nu_g$ be
		the outward unit normal along $M^n$. The second fundamental form is
\[
\pi(X,Y)=\langle\nabla_X\nu_g,Y\rangle, \qquad X,Y\in TM^n,
\]
		and its trace-free part is
\[
\mathring\pi(X,Y)=\pi(X,Y)-h_g\bar g(X,Y),
\]
		where $h_g=\frac{H}{n}$ is the normalized mean curvature of $M^n$.  We use
$\bar\nabla$ and $\bar\Delta$ for the Levi-Civita connection and Laplacian
associated with $\bar g$.

		The Paneitz operator,  introduced by Paneitz \cite{P2008} in 1983,  is the fourth-order conformally covariant operator
\[
P_4^g = \Delta_g^2 -\delta\Big[ \Big( \frac{n^2-2n+5}{2n(n-1)}R_g g -\frac{4}{n-1}\operatorname{Ric}_g \Big)\d \Big] +\frac{n-3}{2}Q_g,
\]
		where $\delta$ is the divergence operator and $\d$ is the exterior
		differential. The zeroth-order curvature term in $P_4^g$ is given by the $Q$-curvature, emphasized by Branson \cite{B1985}, namely
\[
Q_g = -\frac{1}{2n}\Delta_gR_g + \frac{n^3-n^2+11n-3}{8n^2(n-1)^2}R_g^2 -\frac{2}{(n-1)^2}|\operatorname{Ric}_g|^2.
\]
		Under a conformal change of metrics $\tilde g=e^{2\tau}g$, one has
\[
P_4^{\tilde g}(\phi) = e^{-\frac{n+5}{2}\tau} P_4^g(e^{\frac{n-3}{2}\tau}\phi), \quad \forall\, \phi\in C^\infty(\overline{X^{n+1}}).
\]

As mentioned in the introduction, the Paneitz operator is accompanied by a family of conformally covariant
boundary operators $\mathscr B_k^3$, $k=0,1,2,3$, which satisfy
\[
	(\sB_i^3)_{\tilde g}(\phi) = e^{-\frac{n+2i-3}{2}\tau} (\sB_i^3)_g(e^{\frac{n-3}{2}\tau}\phi), \quad \forall\, \phi\in C^\infty(\overline{X^{n+1}}).
    \]
	For \(i=1,2,3\), the associated boundary \(T\)-curvatures are defined by
	\be\label{Tcurvatures}
	(\sT ^3_i)_{g}=\frac{2}{n-3}(\sB^3_i)_g(1).
	\ee
   In particular, \(\sT_1^3\) is the normalized mean curvature, while \(\sT_2^3\) and \(\sT_3^3\) are higher-order boundary curvature quantities associated with the Paneitz operator.

With the above normalization, for
$\phi\in C^\infty(\overline X)$ the boundary operators are given by
		\begin{align*}
			\mathscr B_0^3\phi
			=&\,\phi,\\
			\mathscr B_1^3\phi
			=&\,
			\frac{\partial\phi}{\partial\nu_g}
			+\frac{n-3}{2}h_g\phi,\\
			\mathscr B_2^3\phi
			=&\,
			-\bar\Delta\phi
			+\nabla^2\phi(\nu_g,\nu_g)
			+(n-2)h_g\frac{\partial\phi}{\partial\nu_g}
			+\frac{n-3}{2}\sT_2^3\phi,\\
			\mathscr B_3^3\phi
			=&\,
			-\frac{\partial}{\partial\nu_g}\Delta_g\phi
			-2\bar\Delta\frac{\partial\phi}{\partial\nu_g}
			-\frac{n-3}{2}h_g\nabla^2\phi(\nu_g,\nu_g)
			+\frac{4}{n-1}\langle\mathring\pi,\bar\nabla^2\phi\rangle\\
			&\,
			-\frac{3n-5}{2}h_g\bar\Delta\phi
			-2(n-4)\langle\bar\nabla h_g,\bar\nabla\phi\rangle
			+S_2^3\frac{\partial\phi}{\partial\nu_g}
			+\frac{n-3}{2}\sT_3^3\phi;
		\end{align*}
        see  \cite{C2018}. 
		
        Let
\[
A_{ij} = \frac{1}{n-1} \Big( R_{ij}-\frac{R_g}{2n}g_{ij} \Big)
\]
		be the Schouten tensor of $g$, and set $J=\operatorname{tr}_g A$. We denote by $\bar A$ and $\bar J$ the corresponding Schouten tensor and
its trace for the boundary metric $\bar g$. Then
		\begin{align*}
			S_2^3
			=&\,
			-\frac{3n^2-7n+6}{4}h_g^2
			+\frac{n-7}{2}A(\nu_g,\nu_g)
			+\frac{3n-5}{2}\bar J
			+\frac12|\mathring\pi|^2,\\
			\sT_2^3
			=&\,
			\bar J-A(\nu_g,\nu_g)
			+\frac{n-2}{2}h_g^2,\\
			\sT_3^3
			=&\,
			\frac{\partial J}{\partial\nu_g}
			-2\bar\Delta h_g
			-\frac{4}{n-1}\langle\mathring\pi,\bar A\rangle
			+\frac{n-3}{2}h_gA(\nu_g,\nu_g)\\
			&\,
			+\frac{3n-1}{2}h_g\bar J
			+\frac{n+1}{2(n-1)}h_g|\mathring\pi|^2
			-\frac{n^2-n+2}{4}h_g^3.
		\end{align*}
		Here $S_2^3$ is a lower-order geometric coefficient appearing in
$\mathscr B_3^3$. 

We shall frequently use the specialization of these operators to the
Euclidean unit ball. On \((\B^{n+1},\Sn,|\d\xi|^2\)) with $r=|\xi|$, one has $\sB_0^3(U)=U$ and
		\be\label{Boundaryoperators-Ball}
		\begin{aligned}
			\sB_1^3(U)= &\, \frac{\pa U}{\pa r}+\frac{n-3}{2} U, \\
			\sB_2^3(U)= &\, \frac{\pa^2 U}{\pa r^2}-\Delta_{\Sn} U+(n-2) \frac{\pa U}{\pa r}+\frac{(n-3)(n-1)}{2} U, \\
			\sB_3^3(U)= & -\frac{\pa \Delta U}{\pa r}-2 \Delta_{\Sn} \frac{\pa U}{\pa r}-\frac{n-3}{2} \frac{\pa^2 U}{\pa r^2}-\frac{3 n-5}{2} \Delta_{\Sn} U \\
			&\, +\frac{n-3}{2} \frac{\pa U}{\pa r}+\frac{(n^2-1)(n-3)}{4} U.
		\end{aligned}
		\ee 
		The corresponding $T$-curvatures of the standard Euclidean ball are
		\[
		\sT ^3_1=1, \quad \sT ^3_2=n-1, \quad \text { and } \quad \sT ^3_3=\frac{n^2-1}{2}.
		\]


		\subsection{Complementing condition}


We first recall the complementing condition for fourth-order boundary value
problems and verify it for the conformal boundary operators introduced above.
This condition will allow us to apply standard boundary elliptic estimates
to the pairs $(\sB_i^3,\sB_3^3)$, $i\in\{1,2\}$.

		Following the notation of Gazzola-Grunau-Sweers \cite[p.~34]{GGS2010}, let $B_1(x,D)$ and $B_2(x,D)$ be two linear boundary differential operators, and let $B'_1(x,D)$ and $B'_2(x,D)$ denote their principal parts. Given an interior source term $f$ and boundary data $f_1,f_2$, we consider the fourth-order boundary value problem:
\[
\left\{\begin{aligned} &\Delta^2 u=f &&\text{ in }\, \R_{+}^{n+1},\\ &B_1(x,D) u=f_1&&\text{ on }\,\pa \R^{n+1}_{+},\\ &B_2(x,D) u=f_2 &&\text{ on }\, \pa \R^{n+1}_{+}. \end{aligned}\right.
\]
		Elliptic estimates for such boundary value problems depend crucially on the structure of the boundary operators. The relevant condition is the following complementing condition, also known as the Lopatinski-Shapiro condition; see Agmon-Douglis-Nirenberg \cite{ADN1964} and Gazzola-Grunau-Sweers \cite[p.~34]{GGS2010}.

		\begin{defn}[Complementing condition]
	Let $\Omega$ be a smooth domain.		The pair $\{B_1,B_2\}$ is said to satisfy the complementing condition for
			$\Delta^2$ if, for every $x\in\pa \Omega$ and every nonzero tangential
			covector $\tau\in T_x^*\pa \Omega$, the two polynomials
			$B'_1(x, \tau+s\nu)$ and $B'_2(x, \tau+s\nu)$ in the complex variable $s$ are linearly independent modulo
			$(s-\mathrm i |\tau|)^{2}$, where $\nu$ denotes the unit outward normal at $x$ and $\mathrm i=\sqrt{-1}$.
		\end{defn}
A direct computation of the principal boundary symbols gives the following result.
		\begin{lem}\label{lem:complementing-condition}
			On $(\R^{n+1}_+,\partial\R^{n+1}_+,|\d X|^2)$, for $0\leq i<j\leq 3$, the pair
			$(\sB_i^3,\sB_j^3)$ satisfies the complementing condition for $\Delta^2$ if
			and only if
			$i+j\ne3$.
		\end{lem}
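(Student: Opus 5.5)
The plan is to reduce the statement to a direct computation of principal boundary symbols on the flat half-space, followed by a reduction modulo $(s-\mathrm i|\tau|)^2$. On $(\R^{n+1}_+,\pa\R^{n+1}_+,|\d X|^2)$ the operators are already given explicitly:
\[
\sB_0^3=\mathrm{Id},\quad \sB_1^3=-\pa_t,\quad \sB_2^3=\pa_t^2-\Delta_{\Rn},\quad \sB_3^3=\pa_t\Delta+2\Delta_{\Rn}\pa_t=\pa_t^3+3\Delta_{\Rn}\pa_t .
\]
All of them coincide with their principal parts, since the flat background has vanishing $h_g$, $\mathring\pi$, $A$ and $T$-curvatures.

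I will substitute $D\mapsto \tau+s\nu$ with $\nu=-\mathbf e_{n+1}$, following the convention of \cite{GGS2010} in which the symbol of $\Delta^2$ is $(|\tau|^2+s^2)^2$, so that its roots in the upper half-plane are the double root $s=\mathrm i\rho$ with $\rho=|\tau|>0$. With this substitution $\pa_t\mapsto -s$ and $\Delta_{\Rn}\mapsto\rho^2$, up to the fixed sign convention. Up to nonzero constant factors, which do not affect linear independence, the symbols become
\[
b_0=1,\qquad b_1=s,\qquad b_2=s^2-\rho^2,\qquad b_3=s^3+3\rho^2 s .
\]

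Next I will reduce each $b_k$ modulo $(s-\mathrm i\rho)^2$. Writing $s=\mathrm i\rho+w$ and discarding $w^2$, every polynomial becomes $a+bw$, identified with the vector $(a,b)\in\mathbb C^2$. A short expansion gives
\[
b_0\mapsto(1,0),\quad b_1\mapsto(\mathrm i\rho,1),\quad b_2\mapsto(-2\rho^2,2\mathrm i\rho)=2\mathrm i\rho\,(\mathrm i\rho,1),\quad b_3\mapsto(2\mathrm i\rho^3,0)=2\mathrm i\rho^3(1,0).
\]
Thus $b_2$ is proportional to $b_1$, and $b_3$ is proportional to $b_0$. Since the vectors $(1,0)$ and $(\mathrm i\rho,1)$ are independent for $\rho\neq0$, the pair $(\sB_i^3,\sB_j^3)$ is independent modulo $(s-\mathrm i\rho)^2$ exactly when one of $i,j$ lies in $\{0,3\}$ and the other lies in $\{1,2\}$. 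For $i<j$ these pairs are $(0,1),(0,2),(1,3),(2,3)$, whose index sums are $1,2,4,5$. The failing pairs are $(0,3)$ and $(1,2)$, which are precisely those with $i+j=3$. This can be confirmed by computing the $2\times2$ determinants, which are $\pm1$, $\pm2\mathrm i\rho$, $\pm2\mathrm i\rho^3$ or $\pm4\rho^4$ in the good cases and $0$ in the bad ones.

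The only delicate point is bookkeeping: fixing the sign and $\mathrm i$ conventions for the symbol consistently, so that the root relevant to the outward normal really is $+\mathrm i\rho$. A wrong sign would interchange the roles of $b_2$ and $b_3$ in the reduction. I will fix this by checking that the symbol of $\Delta^2$ factors with the stated double root under the chosen convention. The cancellation in $b_3$ (the $w$-coefficient $-3\rho^2+3\rho^2=0$) is robust under an overall sign of $s$, which gives a consistency check.
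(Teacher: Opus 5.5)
Your computation is correct and is exactly the direct principal-symbol computation the paper invokes (the paper states the lemma without writing it out): modulo $(s-\mathrm i\rho)^2$ one has $b_3\equiv 2\mathrm i\rho^3\, b_0$ and $b_2\equiv 2\mathrm i\rho\, b_1$, so linear independence fails precisely for the pairs $(0,3)$ and $(1,2)$. The sign worry in your last paragraph is harmless: $b_0,b_2$ are even and $b_1,b_3$ are odd in $s$, so the same two linear dependencies also hold modulo $(s+\mathrm i\rho)^2$, and no choice of convention can interchange the roles of $b_2$ and $b_3$.
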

%
%

		We also record here  a local elliptic estimate for homogeneous biharmonic boundary
		value problems. The proof follows from the standard Agmon-Douglis-Nirenberg
		estimates; see \cite[Theorem 9.3 or Theorem 15.2]{ADN1964} and
		\cite[Theorem 2.19 or Theorem 2.20]{GGS2010}.
		\begin{lem}\label{lem:elliptic-estimate}
			Let 	$i\in \{1,2\}$.	Assume that
			$u\in C^4(\overline{\R^{n+1}_+}\setminus\{0\})$ satisfies
			\[	\left\{\begin{aligned}
				&\Delta^2 u=0  &&\text{ in }\, \R_{+}^{n+1},\\
				&\sB^3_i(u)=0&&\text{ on }\, \pa \R^{n+1}_{+},\\
				&\sB_3^3(u)=0 &&\text{ on }\,  \pa \R^{n+1}_{+}.
			\end{aligned}\right.
			\]
			Then, for every $m\in\N$,
\[
\|u\|_{C^{m}(\cB^{+}_{3/4}\backslash \cB^{+}_{1/2})}\leq C(m,n)\|u\|_{L^1(\cB^{+}_{1}\backslash \cB^{+}_{1/4})}.
\]
		\end{lem}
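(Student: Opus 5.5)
The estimate in Lemma \ref{lem:elliptic-estimate} follows from a localization and bootstrap based on the boundary Schauder/$L^p$ estimates of Agmon--Douglis--Nirenberg. By Lemma \ref{lem:complementing-condition}, for $i\in\{1,2\}$ we have $i+3\neq3$, so the pair $(\sB_i^3,\sB_3^3)$ satisfies the complementing condition for $\Delta^2$ on $\partial\R^{n+1}_+$. The operators have constant coefficients in the flat model, $\sB_1^3=-\partial_t$ or $\sB_2^3=\partial_t^2-\Delta_{\Rn}$, together with $\sB_3^3=\partial_t\Delta+2\Delta_{\Rn}\partial_t$. Hence \cite[Theorem 15.2]{ADN1964} (equivalently \cite[Theorem 2.20]{GGS2010}) gives interior-plus-boundary estimates on half-annular regions away from the origin, where $u$ is $C^4$ up to the boundary.

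The plan has three steps, with cutoffs on a nested family of half-annuli $A_k:=\cB^+_{r_k^+}\setminus\cB^+_{r_k^-}$ interpolating between $\cB^+_1\setminus\cB^+_{1/4}$ and $\cB^+_{3/4}\setminus\cB^+_{1/2}$.

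\textbf{Step 1: local ADN estimates.} Cover $\overline{A_{k+1}}$ by finitely many small balls, either interior or centered on $\partial\R^{n+1}_+$, whose doubles lie in $A_k$. On each such ball, ADN gives
\[
\|u\|_{W^{m+4,p}(\text{small})}\le C\|u\|_{L^p(\text{double})}.
\]
The right-hand side carries no data terms because the equation and both boundary conditions are homogeneous. Differentiating tangentially, or simply using the higher-order version of the estimate, gives every $W^{m,p}$ norm.

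\textbf{Step 2: lowering the integrability to $L^1$.} Iterate the estimate with Sobolev embedding to pass from $L^p$ down to $L^1$ on the right. One route is to use the $W^{4,p}$ estimate with $p$ close to $1$, which ADN allows for any $1<p<\infty$. Alternatively, apply the standard trick of interpolating $\|u\|_{L^p}\le \epsilon\|u\|_{W^{4,p}}+C_\epsilon\|u\|_{L^1}$ on nested domains and absorbing the first term through a covering/iteration (Simon-type absorption lemma) over shrinking half-annuli.

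\textbf{Step 3: passing to $C^m$.} Sobolev embedding then turns $W^{m+n+2,p}$ into $C^m$. Since only finitely many steps are involved and the geometry is fixed, the constant depends only on $m$ and $n$.

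I expect the main obstacle to be the reduction to the $L^1$ norm, since ADN is stated in $L^p$ with $p>1$. I would handle it with the absorption lemma. Let $\phi(\rho,\sigma)=\|u\|_{L^p(\text{region between radii})}$. Using interpolation, one shows $\phi(\rho)\le \tfrac12\phi(\sigma)+C(\sigma-\rho)^{-N}\|u\|_{L^1}$ for nested regions, and the standard iteration lemma then removes the $\tfrac12\phi$ term. The only other point to check is that the cutoff commutators preserve the boundary conditions. To avoid this, apply the local ADN estimates directly with the homogeneous boundary conditions on boundary balls, without multiplying $u$ by a cutoff.
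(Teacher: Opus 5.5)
Your proposal is correct and follows the paper's route. The paper's proof consists only of invoking the Agmon--Douglis--Nirenberg Schauder/$L^p$ estimates, justified by the complementing condition of Lemma \ref{lem:complementing-condition} since $i+3\neq 3$; it does not spell out the localization on half-annuli or the passage to the $L^1$ norm, and your interpolation-and-absorption iteration correctly supplies that step (the ``$p$ close to $1$'' remark alone would not reach $L^1$, but you rightly rely on the absorption lemma instead).
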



			%

			\subsection{Poisson kernels and Green functions}\label{sec:2.3}

		The complementing condition established above provides the elliptic
framework for the biharmonic boundary value problems associated with the
pairs $(\sB_i^3,\sB_j^3)$ satisfying $i+j\neq3$. We now recall the explicit representation formulas for these problems in some standard models. More precisely, we record the Poisson kernels and Green functions obtained by Chen--Zhang \cite{CZ202406} on the upper half-space and, via conformal equivalence, on the unit ball.  For sixth-order analogues, we refer to the work of the last named author
\cite{Z2026}.
			%

			Let $i,j\in\{0,1,2,3\}$ with $i<j$. A pair of kernel functions $(P_i^3,P_j^3)$ on $\R^{n+1}_+$ is called the biharmonic Poisson kernel associated with the conformal boundary operator pair $(\sB_i^3,\sB_j^3)$ if, for suitable boundary data $f_i,f_j$, the Poisson integral
			\begin{align*}
				v(X) & =[P_i^3 * f_i+P_j^3 * f_j](X) \\
				& =\int_{\pa \mathbb{R}_{+}^{n+1}} P_i^3(x-y, t) f_i(y)\, \d  y+\int_{\pa  \mathbb{R}_{+}^{n+1}} P_j^3(x-y, t) f_j(y) \, \d  y
			\end{align*}
			is well-defined and gives a classical solution of
			\be\label{vzeroin}
			\left\{\begin{aligned}
				&\Delta^2 u=0  &&\text{ in }\,  \R_{+}^{n+1},\\
				&\sB^3_i(u)=f_i&&\text{ on }\, \pa \R_+^{n+1},\\
				&\sB_j^3(u)=f_j&&\text{ on }\,  \pa \R_+^{n+1}.
			\end{aligned}\right.
			\ee

			We shall use the following explicit formulas for the biharmonic Poisson kernels
			on the upper half-space, whose proof can be found in \cite[Theorem 1.1]{CZ202406}.
			\begin{lem}[Poisson kernel in $\R_{+}^{n+1}$]\label{lem:Poissonformula}
				Let $n\ge4$ and $X=(x,t)\in\R^{n+1}_+$.  Then
				\begin{align*}
					P_0^3(X)=&\,\frac{2(n+1)}{|\Sn|}\frac{t^3}{(t^2+|x|^2)^{\frac{n+3}{2}}},\nonumber\\
					P_1^3(X)=&\,-\frac{2}{|\Sn|}\frac{t^2}{(t^2+|x|^2)^{\frac{n+1}{2}}},\\
					P_2^3(X)=&\,-\frac{1}{(n-1)|\Sn|}\frac{t}{(t^2+|x|^2)^{\frac{n-1}{2}}},\\
					P_3^3(X)=&\,\frac{1}{(n-1)(n-3)|\Sn|}\frac{1}{(t^2+|x|^2)^{\frac{n-3}{2}}}.
				\end{align*}
				Suppose that, for $k\in\{i,j\}$, 
				$f_k\in C^{4-k,\gamma}(\Rn)$ for some $\gamma\in (0,1)$  and
				$f_k=O(|x|^{-k-\delta_k})$ as $|x| \to \infty$ for some $\delta_k >0$. Then, for $i,j\in\{0,1,2,3\}$ with $i<j$ and
				$i+j\ne3$, the Poisson kernel for \eqref{vzeroin} is given by
				$P_i^3\oplus P_j^3$.
			\end{lem}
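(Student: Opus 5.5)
The formulas in Lemma~\ref{lem:Poissonformula} can be verified directly, and this is the route I would take; the full argument is in \cite[Theorem 1.1]{CZ202406}. Throughout, write $R=|X|=(t^2+|x|^2)^{1/2}$.

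\textbf{Step 1: each kernel is biharmonic.} Every kernel has the form $t^{3-k}R^{-(n+3-2k)}$, $k=0,1,2,3$.
\begin{itemize}
\item $P_3^3\propto R^{3-n}$ is, up to a constant, the fundamental solution of $\Delta^2$ in $\R^{n+1}$, so it is biharmonic away from the origin.
\item For the others I would use $\Delta(t\,f)=t\,\Delta f+2\pa_t f$ repeatedly. Alternatively, observe that $P_2^3$, $P_1^3$, $P_0^3$ are linear combinations of $t\,\pa_t$-type derivatives of radial biharmonic functions $R^{m}$. For example, $t R^{1-n}$ is a combination of $\pa_t R^{3-n}$ and $t\,\pa_t R^{3-n}$, and $\Delta^2(t g)=t\Delta^2 g+4\pa_t\Delta g$.
\end{itemize}
Either way, a finite computation shows $\Delta^2 P_k^3=0$ in $\R^{n+1}_+$.

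\textbf{Step 2: off-diagonal boundary conditions.} For each admissible pair $i<j$ with $i+j\neq 3$, I would check that $\sB_l^3 P_k^3(x,0)=0$ for $x\neq0$ whenever $l\neq k$ and $l,k\in\{i,j\}$. This uses the half-space forms $\sB_1^3=-\pa_t$, $\sB_2^3=\pa_t^2-\Delta_{\Rn}$ and $\sB_3^3=\pa_t\Delta+2\Delta_{\Rn}\pa_t$. Two sample checks:
\begin{itemize}
\item $\pa_t R^{3-n}$ vanishes at $t=0$, so $\sB_1^3P_3^3=0$.
\item At $t=0$ one has $\pa_t^2R^{3-n}=(3-n)|x|^{1-n}$, which equals $\Delta_{\Rn}|x|^{3-n}$, so $\sB_2^3P_3^3=0$.
\end{itemize}
Similar parity arguments handle the remaining cases. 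Odd powers of $t$ kill the even-order operators, and even powers of $t$ kill the odd-order ones, up to explicit cancellations in the mixed terms. The excluded pairs $i+j=3$ are exactly those where this cancellation fails; this is consistent with Lemma~\ref{lem:complementing-condition}.

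\textbf{Step 3: diagonal boundary conditions.} I would compute $\sB_k^3P_k^3(x,t)$ for $t>0$ and show it equals a positive multiple of the harmonic Poisson kernel $t\,R^{-(n+1)}$, plus terms that form an approximate identity of vanishing total mass. The normalizing constants $\frac{2(n+1)}{|\Sn|}$, $-\frac{2}{|\Sn|}$, etc. are fixed precisely so that
\[
\lim_{t\to0^+}\sB_k^3P_k^3(\cdot,t)*f_k=f_k.
\]

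\textbf{Step 4: the analytic part, which I expect to be the main obstacle.}
\begin{itemize}
\item \emph{Well-definedness and interior derivatives.} The decay $f_k=O(|x|^{-k-\delta_k})$ together with $|P_k^3|\lesssim t^{3-k}R^{-(n+3-2k)}$ makes the convolution absolutely convergent. It also permits differentiation under the integral in the interior.
\item \emph{Boundary traces, which I expect to be the delicate step.} Justifying that the boundary traces of $v$, up to third order, are attained is the hard part. Higher derivatives of the kernels are not integrable near the diagonal, so differentiation cannot be moved under the integral at $t=0$ directly.
\end{itemize}
To handle the traces, I would first move tangential derivatives onto $f_k$, which is legitimate by translation invariance. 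I would then use the Hölder regularity $f_k\in C^{4-k,\ga}$ to subtract Taylor polynomials, i.e. the standard Schauder-type argument for singular integrals. Combined with Steps 2--3, this gives $\sB_i^3 v=f_i$ and $\sB_j^3 v=f_j$ classically, so $P_i^3\oplus P_j^3$ is the Poisson kernel for \eqref{vzeroin}.
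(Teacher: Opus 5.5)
The paper proves nothing itself here; it defers to \cite[Theorem 1.1]{CZ202406}. Your direct-verification plan is the natural one, and Steps 1 and 3 are sound. The genuine gap is Step 2.

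Showing that $\sB_l^3P_k^3(x,0)=0$ for $x\neq0$, or that $\sB_l^3P_k^3$ is odd in $t$, does not imply $\sB_l^3(P_k^3*f_k)(x,t)\to0$ as $t\to0^+$. The reason is that the \emph{diagonal} terms pass exactly the same test. $P_k^3$ has the parity of $t^{3-k}$ and $\sB_l^3$ has that of $\pa_t^l$, so $\sB_k^3P_k^3$ is always odd in $t$ and vanishes at every $(x,0)$ with $x\neq0$. For example, $\sB_0^3P_0^3=P_0^3\propto t^3$, yet $\sB_k^3(P_k^3*f_k)\to f_k$. Whether the boundary value is $0$ or $f_k$ is decided by the mass that $\sB_l^3P_k^3(\cdot,t)$ concentrates at the origin as $t\to0^+$. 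No evaluation at $t=0$, $x\neq0$, can detect this. So, as written, Steps 2--4 do not establish the homogeneous conditions $\sB_j^3(P_i^3*f_i)=0$ and $\sB_i^3(P_j^3*f_j)=0$.

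What is needed is the $t>0$ analysis you reserve for Step 3, applied to the off-diagonal terms as well. Introduce the harmonic functions $P=\frac{2}{|\Sn|}\,t\,R^{-n-1}$ and $Q=\frac{1}{(n-1)|\Sn|}R^{1-n}$, and note $\pa_tQ=-\frac12P$. Then $P_0^3=P-t\pa_tP$, $P_1^3=-tP$, and $P_2^3=-tQ=\pa_tP_3^3$. A short computation using $\Delta_{\Rn}g=-\pa_t^2g$ for harmonic $g$ gives, for $t>0$,
\begin{align*}
&\sB_1^3P_0^3=t\,\pa_t^2P,\quad \sB_2^3P_0^3=-2t\,\pa_t^3P,\quad \sB_0^3P_1^3=-tP,\quad \sB_3^3P_1^3=2t\,\pa_t^3P,\\
&\sB_0^3P_2^3=-tQ,\quad \sB_3^3P_2^3=-t\,\pa_t^2P,\quad \sB_1^3P_3^3=tQ,\quad \sB_2^3P_3^3=tP.
\end{align*}
On the diagonal, $\sB_0^3P_0^3=\sB_3^3P_3^3=P-t\pa_tP$ and $\sB_1^3P_1^3=\sB_2^3P_2^3=P+t\pa_tP$. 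The limits then follow from standard facts about these pieces:
\begin{itemize}
\item $P*f\to f$.
\item $t\pa_tP(\cdot,t)$ is a mean-zero approximate identity, so its convolution with a bounded continuous function tends to $0$.
\item $t\pa_t^2(P*f)=-t\,P*\Delta_{\Rn}f$ and $t\pa_t^3(P*f)=-(t\pa_tP)*\Delta_{\Rn}f$ both tend to $0$ under the $C^{4-k,\gamma}$ hypotheses.
\item $tQ*f_k\to0$ for $k=2,3$ holds only because of the decay $f_k=O(|x|^{-k-\delta_k})$, since $Q(\cdot,t)\notin L^1(\Rn)$.
\end{itemize}
This settles all eight off-diagonal and all four diagonal boundary identities at once, and it shows exactly where the hypotheses on $f_k$ enter.
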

			We next turn to the Green functions for the corresponding homogeneous
boundary conditions. Let $\Gamma$ denote the fundamental solution of
$\Delta^2$ in $\R^{n+1}$. For $n\geq2$,
\[
\Ga(X-Y)=\Ga(|X-Y|)=\left\{\begin{aligned}& \frac{1}{2(n-1)(n-3)|\Sn|}|X-Y|^{3-n}&& \text{ if }\, n\neq 3, \\&-\frac{1}{4|\S^3|}\log |X-Y|&&\text{ if }\, n=3. \end{aligned}\right.
\]
		
          Let $i,j\in\{0,1,2,3\}$ with $i<j$ and $i+j\neq3$, and consider
			\be\label{vzerobdr}
			\left\{\begin{aligned}
				&\Delta^2 u=f  &&\text{ in }\,  \R_+^{n+1},\\
				&\sB^3_i(u)=0&&\text{ on }\, \pa \R_+^{n+1},\\
				&\sB_j^3(u)=0&&\text{ on }\,  \pa \R_+^{n+1}.
			\end{aligned}\right.
			\ee
			A function $G^{(i, j)}:(X, Y) \in \overline{\mathbb{R}_{+}^{n+1}} \times \overline{\mathbb{R}_{+}^{n+1}} \mapsto \mathbb{R} \cup\{\infty\}$ is called the Green function for \eqref{vzerobdr} if $G^{(i, j)}(X, Y)=\Gamma(X-Y)+H^{(i, j)}(X, Y)$, where, for each fixed $Y\in\R^{n+1}_+$, the regular part
			$H^{(i,j)}(\cdot,Y)$ satisfies
\[
\left\{\begin{aligned} &\Delta^2 H^{(i,j)}=0 &&\text{ in }\, \R_+^{n+1},\\ &\sB^3_i(H^{(i,j)})=-\sB_{i}^3(\Ga)&&\text{ on }\, \pa \R_+^{n+1},\\ &\sB_j^3(H^{(i,j)})=-\sB_{j}^3(\Ga)&&\text{ on }\, \pa \R_+^{n+1}. \end{aligned}\right.
\]

		The following explicit formulas will be used later. Their proof can be found in \cite[Theorem 1.2]{CZ202406}.
			\begin{lem}[Green functions for $\R_{+}^{n+1}$]\label{lem:Greenfunctions}
				Let $n\geq4$. For $X=(x,t)\in\R^{n+1}_+$, set $\bar{X}=(x,-t)$.  Then the biharmonic Green functions in $\R^{n+1}_+$ are given by
				\begin{align*}
					G^{(0,1)}(X,Y)=&\,\Ga(X-Y)+\frac{1}{2}\Ga(|\bar{X}-Y|)\Big[(n-3)\frac{|X-Y|^2}{|\bar{X}-Y|^2}-(n-1)\Big],\\
					G^{(0,2)}(X,Y)=&\,\Ga(X-Y)-\Ga(\bar{X}-Y),\\
					G^{(1,3)}(X,Y)=&\,\Ga(X-Y)+\Ga(\bar{X}-Y),\\
					G^{(2,3)}(X,Y)=&\,\Ga(X-Y)-\frac{1}{2}\Ga(|\bar{X}-Y|)\Big[(n-3)\frac{|X-Y|^2}{|\bar{X}-Y|^2}-(n-1)\Big].
				\end{align*}
				Moreover,
\[
0 \leq G^{(0,1)}(X, Y) \leq G^{(0,2)}(X, Y) \leq G^{(1,3)}(X, Y) \leq G^{(2,3)}(X, Y)
\]
				for all $X,Y\in\overline{\R^{n+1}_+}$, $X\ne Y$, with equality if and only
				if either $X\in\partial\R^{n+1}_+$ or $Y\in\partial\R^{n+1}_+$.
			\end{lem}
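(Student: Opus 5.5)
The plan is to verify directly that each displayed kernel has the defining properties of a Green function for \eqref{vzerobdr}. Then uniqueness and the ordering reduce to elementary one-variable inequalities in the single quantity
\[
\rho=\rho(X,Y):=\frac{|X-Y|^2}{|\bar X-Y|^2}.
\]
The basic identity is $|\bar X-Y|^2=|X-Y|^2+4ts$, so that $1-\rho=4ts/|\bar X-Y|^2\in[0,1]$ on $\overline{\R^{n+1}_+}\times\overline{\R^{n+1}_+}$. Moreover $\rho=1$ if and only if $ts=0$.

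First I check that each regular part $H^{(i,j)}(\cdot,Y)$ is biharmonic in $\R^{n+1}_+$ for fixed $Y\in\R^{n+1}_+$. The term $\Ga(\bar X-Y)$ is biharmonic because its singularity $\bar Y$ lies outside $\overline{\R^{n+1}_+}$. For the remaining term, the identity above gives
\[
\Ga(|\bar X-Y|)\,\rho=\Ga(|\bar X-Y|)-4ts\,\Ga(|\bar X-Y|)\,|\bar X-Y|^{-2},
\]
and $\Ga(|\bar X-Y|)|\bar X-Y|^{-2}$ is a constant multiple of $|\bar X-Y|^{1-n}$, which is harmonic in $\R^{n+1}_+$. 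Since $t$ times a harmonic function is biharmonic, $H^{(0,1)}$ and $H^{(2,3)}$ are biharmonic.

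Next come the boundary conditions, using the half-space formulas $\sB^3_1=-\pa_t$, $\sB^3_2=\pa_t^2-\Delta_{\Rn}$ and $\sB^3_3=\pa_t\Delta+2\Delta_{\Rn}\pa_t$.
\begin{itemize}
\item $G^{(1,3)}$ is even in $t$, so its odd $t$-derivatives vanish on $\{t=0\}$. Both $\sB^3_1$ and $\sB^3_3$ involve only odd $t$-derivatives, hence both vanish.
\item $G^{(0,2)}$ is odd in $t$, so $G^{(0,2)}$, $\pa_t^2G^{(0,2)}$ and $\Delta_{\Rn}G^{(0,2)}$ all vanish at $t=0$. This gives $\sB^3_0=\sB^3_2=0$.
\item For $G^{(0,1)}$ and $G^{(2,3)}$, I write $G=\Ga(X-Y)\pm\bigl[\tfrac{n-3}{2}\,ts\,\phi(X)-\Ga(\bar X-Y)\bigr]$ with $\phi=c_n|\bar X-Y|^{1-n}$ harmonic, expand to third order in $t$ at $t=0$, and check the two conditions by a routine computation.
\end{itemize}
This boundary computation is the main (purely computational) obstacle. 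To organize it, I would work in the variable $\rho$ and use that $\pa_t\rho|_{t=0}=-4s/|X-Y|^2\cdot\rho$-type expressions. Uniqueness of the Green function, if required, follows from the decay of $H^{(i,j)}$ at infinity together with the Poisson representation in Lemma~\ref{lem:Poissonformula}: a decaying biharmonic function with zero data satisfying the complementing condition (Lemma~\ref{lem:complementing-condition}, since $i+j\ne3$) vanishes.

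Finally, the ordering. Set $a=\frac{n-3}{2}>0$ and note that $\Ga(X-Y)=\Ga(\bar X-Y)\rho^{-a}$. Then
\[
G^{(0,2)}-G^{(0,1)}=a(1-\rho)\Ga(\bar X-Y),\qquad
G^{(1,3)}-G^{(0,2)}=2\Ga(\bar X-Y),\qquad
G^{(2,3)}-G^{(1,3)}=a(1-\rho)\Ga(\bar X-Y).
\]
For the first inequality, $G^{(0,1)}=\Ga(\bar X-Y)f(\rho)$ with $f(\rho)=\rho^{-a}+a\rho-(a+1)$. Since $f(1)=0$ and $f'(\rho)=a(1-\rho^{-a-1})<0$ on $(0,1)$, we get $f>0$ on $(0,1)$. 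All differences are therefore nonnegative and vanish exactly when $\rho=1$, i.e. when $ts=0$. The single exception is the middle difference $2\Ga(\bar X-Y)$, which is strictly positive; so the equality statement should be read for the inequalities involving the factor $1-\rho$ and for $G^{(0,1)}\ge0$, and I would double-check that case against the paper's intended formulation.
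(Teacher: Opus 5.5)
Your proof is correct, and it takes a different route from the paper: the paper gives no argument and simply quotes the formulas and the ordering from \cite[Theorem 1.2]{CZ202406}, while you verify the defining properties directly. Your one-variable reduction via $\rho=|X-Y|^2/|\bar X-Y|^2$ also shortens the boundary computation you expected to be the main obstacle:
\begin{itemize}
\item For $G^{(0,1)}$: since $G^{(0,1)}=\Gamma(\bar X-Y)f(\rho)$ with $f(1)=f'(1)=0$ and $\rho=1$ on $\{t=0\}$, the conditions $G^{(0,1)}=\pa_tG^{(0,1)}=0$ there are immediate.
\item For $G^{(2,3)}$: write $\Gamma(r)=c_0r^{3-n}$. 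Then $\frac{n-3}{2}(1-\rho)\Gamma(\bar X-Y)=2(n-3)c_0\,ts\,|\bar X-Y|^{1-n}$, so $G^{(2,3)}=G^{(1,3)}+2(n-3)c_0\,ts\,\Phi$ with $\Phi=|\bar X-Y|^{1-n}$ harmonic in $\R^{n+1}_+$. The operator $\sB^3_3=\pa_t^3+3\Delta_{\Rn}\pa_t$ annihilates $G^{(1,3)}$ by parity and sends $ts\Phi$ to $3s\Delta\Phi=0$ at $t=0$. For $\sB^3_2$, the term $G^{(1,3)}$ contributes $4(3-n)(1-n)c_0s^2|X-Y|^{-n-1}$, which is cancelled exactly by $4(n-3)(1-n)c_0s^2|X-Y|^{-n-1}$ from the correction term. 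No third-order expansion is needed.
\end{itemize}

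You are right about the equality clause. Since $G^{(1,3)}-G^{(0,2)}=2\Gamma(\bar X-Y)>0$ everywhere, on the boundary one has $0=G^{(0,1)}=G^{(0,2)}<G^{(1,3)}=G^{(2,3)}=2\Gamma$. This is consistent with Remark \ref{rem:boundary-G-and-P}, so the ``if and only if'' can only hold for the other three inequalities.

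The one point to tighten is uniqueness, which your verification does not give by itself. Under the paper's definition, which imposes no condition at infinity, the regular part is not unique. For example, constants or $t^2$ can be added to $H^{(1,3)}$, and $t$ or $t^3$ to $H^{(0,2)}$. So you must require $H^{(i,j)}(\cdot,Y)\to0$ at infinity. Uniqueness then follows from a Liouville argument like the one closing the proof of Proposition \ref{prop:integral-representation}, using scaled boundary estimates (available since $i+j\neq3$). It does not follow from the Poisson formula of Lemma \ref{lem:Poissonformula}, which is an existence statement.
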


\begin{rem}\label{rem:boundary-G-and-P}
				For $i\in\{1,2\}$, one has  $G^{(i,3)}(X,Y)<2\Ga(X-Y)$ for $X,Y\in\R^{n+1}_+$, $\ X\ne Y$.  Moreover, the boundary traces in the first variable satisfy
\[
G^{(i,3)}(X,Y)|_{X=(x,0)}=2\Ga((x,0)-Y) \quad \text{ and }\quad P_3^3(X-(y,0))|_{X=(x,0)}
=\frac12\Ga^n(x-y),\quad x\neq y,
\]
				where
\[
\Ga^n(x-y)=\frac{2}{(n-1)(n-3)|\Sn|}\frac{1}{|x-y|^{n-3}}
\]
				is the fundamental solution of $(-\Delta_{\Rn})^{3/2}$.
			\end{rem}

\begin{rem}\label{rem:Greenfuncions-in-Ball}
Since $\R^{n+1}_+$ and $\B^{n+1}$ are conformally equivalent, the
preceding Poisson kernels and Green functions induce corresponding kernels
$\bar P_k^3$ and $\bar G^{(i,j)}$ on the unit ball. Their explicit formulas
and basic properties can be found in \cite[Section~4]{CZ202406}.
\end{rem}

			\section{The method of moving spheres}\label{sec:3}

			\subsection{Differential and integral equations}
Throughout this section, we assume that $n\geq4$.  The moving-spheres argument will be carried out in 
the associated integral equation. We therefore begin by deriving a
Green--Poisson representation for finite-volume solutions of the
differential problem.

			\begin{prop}\label{prop:integral-representation}
				Let $i\in\{1,2\}$ and $c_i,c_3\in\mathbb R$.	Assume that  $u\in C^{4}(\overline{{\R}_{+}^{n+1}})$  is a nonnegative solution of
\[
\left\{\begin{aligned} &\Delta^2 u= u^{p^{*}} &&\text{ in }\, \R_+^{n+1},\\ &\sB^3_iu=c_iu^{p_i^{*}}&&\text{ on }\, \pa \R^{n+1}_{+},\\ &\sB^3_3u=c_3u^{p_3^{*}} &&\text{ on }\,\pa \R^{n+1}_{+}, \end{aligned}\right.
\]
satisfying the finite-volume conditions \eqref{Finitevolume}. Then $u$ admits the integral representation
				\be\label{Integral-equ}
				u(X)=\int_{\R^{n+1}_{+}}G^{(i,3)}(X,Y)u^{p^{*}}(Y)\, \d Y
				+\sum_{k=i,3}\int_{\Rn}c_kP^3_k(X-(y,0))u^{p_k^{*}}(y,0)\, \d y,
				\ee
				where $G^{(i,3)}$ and $P_k^3$ are the Green function and Poisson kernels,
respectively, introduced in Section \ref{sec:2.3}.
			\end{prop}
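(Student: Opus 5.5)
We obtain \eqref{Integral-equ} by comparing $u$ with the function $v$ defined by its right-hand side. We show that the difference $w=u-v$ is a biharmonic function with homogeneous boundary data and controlled growth, and then apply a Liouville argument based on Lemma~\ref{lem:elliptic-estimate}. The main obstacle is that we are not given pointwise decay of $u$, only the integrability conditions \eqref{Finitevolume}.

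\emph{Step 1: $v$ is well defined and solves the same problem.} Write $f=u^{p^*}$ in $\R^{n+1}_+$ and $f_k=c_ku^{p_k^*}(\cdot,0)$ for $k=i,3$. By \eqref{Finitevolume}, $f\in L^{\frac{2(n+1)}{n+5}}(\R^{n+1}_+)$, since $p^*\cdot\frac{2(n+1)}{n+5}=\frac{2(n+1)}{n-3}$. Likewise $f_k\in L^{\frac{2n}{n+2k-3}}(\Rn)$.

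Recall $G^{(i,3)}\le 2\Ga$ (Remark~\ref{rem:boundary-G-and-P}), and $|P_k^3(X-(y,0))|\lesssim |X-(y,0)|^{k-n}$ because $t\le |X-(y,0)|$. Hardy--Littlewood--Sobolev in $\R^{n+1}$ gives $\Ga*f\in L^{\frac{2(n+1)}{n-3}}$. The boundary-to-interior (trace-type) HLS inequality for the Riesz potential $|X-(y,0)|^{k-n}$ maps $L^{\frac{2n}{n+2k-3}}(\Rn)$ into $L^{\frac{2(n+1)}{n-3}}(\R^{n+1}_+)$. So each term of $v$ is finite almost everywhere, and in fact everywhere, by local regularity of $u$.

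We then check that $\Delta^2v=f$, $\sB_i^3v=f_i$, and $\sB_3^3v=f_3$. Since $G^{(i,3)}$ is the Green function and $P_i^3\oplus P_3^3$ is the Poisson kernel (Lemmas~\ref{lem:Poissonformula} and \ref{lem:Greenfunctions}), these identities hold by local arguments. We split $f$ and $f_k$ into a piece near the point, which is H\"older continuous because $u\in C^4$, and a far piece, which produces a smooth contribution. This avoids the decay hypotheses of Lemma~\ref{lem:Poissonformula}.

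\emph{Step 2: $w=u-v$ is a solution of the homogeneous problem with sublinear averaged growth.} We have $\Delta^2w=0$ in $\R^{n+1}_+$ and $\sB_i^3w=\sB_3^3w=0$ on $\pa\R^{n+1}_+$. Moreover $w\in L^{\frac{2(n+1)}{n-3}}(\R^{n+1}_+)$: this holds for $u$ by \eqref{Finitevolume} and for $v$ by Step~1. By H\"older's inequality,
\[
\int_{\cB^+_R\setminus\cB^+_{R/4}}|w|\,\d X\le C R^{(n+1)(1-\frac{n-3}{2(n+1)})}\|w\|_{L^{\frac{2(n+1)}{n-3}}(\R^{n+1}_+\setminus\cB^+_{R/4})}=o\big(R^{\frac{n+5}{2}}\big).
\]

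\emph{Step 3: Liouville argument.} Rescale $w_R(X)=w(RX)$, which solves the same homogeneous problem because the operators are homogeneous on the flat half-space. Apply Lemma~\ref{lem:elliptic-estimate} to $w_R$ (the lemma only uses the equation near the annulus, so it applies to $w_R$). Combined with Step~2, this gives
\[
\sup_{\cB^+_{3R/4}\setminus\cB^+_{R/2}}|w|\le CR^{-(n+1)}\int_{\cB^+_R\setminus\cB^+_{R/4}}|w|=o\big(R^{\frac{n+5}{2}-(n+1)}\big)=o\big(R^{-\frac{n-3}{2}}\big).
\]
Hence $w\to0$ at infinity. Differentiating the estimate (the $C^m$ bounds) shows that $\nabla^m w=o(R^{-m-\frac{n-3}{2}})$ on these annuli.

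Now apply the Green identity for $\Delta^2$ on $\cB_R^+$ to $w$ against $G^{(i,3)}(X,\cdot)$. The flat boundary terms vanish because the pair $(\sB_i^3,\sB_3^3)$ annihilates both $w$ and $G^{(i,3)}$. On the hemisphere $|Y|=R$, each term pairs $\nabla^{m}w$ with $\nabla^{3-m}G^{(i,3)}$ and is bounded by
\[
o\big(R^{-m-\frac{n-3}{2}}\big)\cdot R^{-(n-3)-(3-m)}\cdot R^n\to0.
\]
Therefore $w(X)=0$, so $w\equiv0$, which gives \eqref{Integral-equ}.

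An alternative to the Green identity is to extend $w$ by an even or odd reflection, depending on the boundary pair, to a biharmonic function on $\R^{n+1}$ that lies in $L^{\frac{2(n+1)}{n-3}}$, and conclude $w\equiv0$ by the classical Liouville theorem. The delicate step is Step~1: justifying the boundary conditions for $v$ and the integrability with only $L^q$ data, via HLS and the trace inequality.
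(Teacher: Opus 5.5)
Your proof is correct, but it is organized differently from the paper's.

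\textbf{How the paper argues.} The paper relies on the Kelvin transform at both stages.
\begin{itemize}
\item It checks that $v$ solves the boundary value problem by applying the Chen--Zhang Poisson theorem to $v_{0,1}$, whose data now decay, and then moving the center.
\item It proves $w=u-v\to0$ at infinity by showing $w_{0,1}\in L^p(\cB_1^+)$ for every $p<\frac{2(n+1)}{n-3}$. This uses only the weak-type bound of Lemma~\ref{lem:Weaknorm}. Lemma~\ref{lem:elliptic-estimate} on shrinking annuli then gives $|w(X)|=O(|X|^{3-n+\frac{n+1}{p}})$.
\item It finishes using boundedness alone, through the scaled estimate $\|\nabla w\|_{L^\infty(\cB_R^+)}\le CR^{-1}\|w\|_{L^\infty(\cB_{2R}^+)}$.
\end{itemize}

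\textbf{How your route differs.} You verify the boundary conditions by a near/far splitting of the data instead. You then use strong-type HLS and trace-HLS bounds to place $w$ in the critical space $L^{\frac{2(n+1)}{n-3}}$. These bounds are available: the exact Poisson kernels fall under Lemma~\ref{lem:Gluck}, which the paper itself uses in Lemma~\ref{lem:Start}. This lets you rescale Lemma~\ref{lem:elliptic-estimate} directly on large annuli, with no inversion. It also gives a sharper bound, $\nabla^m w=o(|X|^{-m-\frac{n-3}{2}})$. Your Green-identity ending needs more bookkeeping than the paper's Liouville step. Once you know $w\to0$, the paper's gradient argument would close your proof just as well.

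\textbf{Two corrections for $i=2$.}

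First, the flat boundary terms in Green's formula do not vanish pointwise. On $t=0$ you have $\partial_t^2w=\Delta_xw$ and $\partial_t\Delta w=-2\Delta_x\partial_tw$. The same holds for $G=G^{(2,3)}(X,\cdot)$, which satisfies these conditions in its second variable by symmetry. The integrand then reduces to the tangential divergence
\[
2\big[G\,\Delta_x\partial_tw-\partial_tw\,\Delta_xG\big]+2\big[\partial_tG\,\Delta_xw-w\,\Delta_x\partial_tG\big].
\]
Integrated over $B_R$, this leaves terms on $\partial B_R\times\{0\}$ pairing $\nabla^jw$ with $\nabla^{2-j}G$. By your decay bounds these are $o(R^{-\frac{n-3}{2}})$, so the argument survives once this is stated.

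Second, the reflection alternative works only for $i=1$. There the conditions become $\partial_tw=\partial_t^3w=0$, and the even reflection is biharmonic. The $(\sB_2^3,\sB_3^3)$ conditions match neither the even pair $\partial_tw=\partial_t^3w=0$ nor the odd pair $w=\partial_t^2w=0$, so that shortcut fails for $i=2$.
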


		We first establish a Green--Poisson representation for the associated
linear biharmonic problem.
			\begin{prop}\label{prop:2}
				Let $i\in\{1,2\}$. Assume that $f\in C^{\infty}(\overline{{\R}_{+}^{n+1}})\cap L^{\frac{2(n+1)}{n+5}}(\R_{+}^{n+1})$ and $f_k\in C^{\infty}(\Rn)\cap L^{\frac{2n}{n+2k-3}}(\Rn)$, $k \in \{1,2,3\}$.  Define
				\be\label{v-integralformula}
				v(X)=\int_{\R^{n+1}_{+}}G^{(i,3)}(X,Y)f(Y)\, \d Y+\sum_{k=i,3}\int_{\Rn}P^3_k(X-(y,0))f_k(y)\, \d y.
				\ee
				Then $v\in C^4(\overline{\R ^{n+1}_+})$ and satisfies
				\be\label{bdry_value-Possion_kernel}
				\left\{\begin{aligned}
					&\Delta^2 v=f  &&\text{ in }\,  \R^{n+1}_+,\\
					&\sB^3_i(v)=f_i&&\text{ on }\, \pa\R_+^{n+1},\\
					&\sB_3^3(v)=f_3&&\text{ on }\,  \pa\R_+^{n+1}.
				\end{aligned}\right.
				\ee
			\end{prop}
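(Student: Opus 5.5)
The plan is to treat the three pieces of \eqref{v-integralformula} separately and use linearity. Write $v=v_0+v_i+v_3$, where
\[
v_0(X)=\int_{\R^{n+1}_+}G^{(i,3)}(X,Y)f(Y)\,\d Y,\qquad
v_k(X)=\int_{\Rn}P^3_k(X-(y,0))f_k(y)\,\d y .
\]
It then suffices to show three things. First, $\Delta^2 v_0=f$ with $\sB^3_i v_0=\sB^3_3 v_0=0$. Second, $v_i$ and $v_3$ are biharmonic. Third, $\sB^3_k v_k=f_k$ and $\sB^3_j v_k=0$ for $\{j,k\}=\{i,3\}$. All three functions must also be $C^4$ up to the boundary.

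\emph{Convergence.} The integrals converge absolutely under the stated integrability.
\begin{itemize}
\item By Lemma~\ref{lem:Greenfunctions} and Remark~\ref{rem:boundary-G-and-P}, $|G^{(i,3)}(X,Y)|\le 2\Ga(X-Y)\lesssim |X-Y|^{3-n}$.
\item Hardy--Littlewood--Sobolev maps $L^{\frac{2(n+1)}{n+5}}(\R^{n+1}_+)$ into $L^{\frac{2(n+1)}{n-3}}$, so $v_0$ is finite a.e. Local smoothness of $f$ upgrades this to pointwise finiteness.
\item For the Poisson terms, $|P^3_k(X-(y,0))|\lesssim t^{3-k}|X-(y,0)|^{-n+k-3}\cdot$ (a bounded factor). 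At fixed $t>0$ this kernel lies in $L^q(\Rn)$ for every $q\ge1$, so Hölder with $f_k\in L^{\frac{2n}{n+2k-3}}$ gives finiteness. At $t=0$, $P^3_3$ reduces to $\tfrac12\Ga^n$, which is a Riesz potential of order $3$ on $\Rn$. HLS again applies, with the exponent $\frac{2n}{n+3}\mapsto\frac{2n}{n-3}$.
\end{itemize}

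\emph{Interior equation.} Split $G^{(i,3)}=\Ga+H^{(i,3)}$, whose explicit form is in Lemma~\ref{lem:Greenfunctions}; for $i=1$ the regular part is simply $\Ga(\bar X-Y)$. The Newtonian-type potential $\int\Ga(X-Y)f(Y)\,\d Y$ satisfies $\Delta^2=f$ by the standard argument for smooth $f$. Apply it with a cutoff $\chi f+(1-\chi)f$: the near part is the classical compactly supported case, and the far part is biharmonic, with derivatives passed under the integral by dominated convergence. The regular part $H^{(i,3)}(X,Y)$ is biharmonic in $X\in\R^{n+1}_+$ for $Y\in\R^{n+1}_+$, and likewise each Poisson kernel is biharmonic in $X$ for $t>0$. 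Differentiating under the integral, justified by the decay bounds above, gives $\Delta^2 v=f$.

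\emph{Boundary conditions and $C^4$ up to the boundary.} This is the main obstacle.
\begin{itemize}
\item For $v_0$: the kernel $G^{(i,3)}(\cdot,Y)$ satisfies the homogeneous conditions $\sB^3_i=\sB^3_3=0$ for each fixed $Y$. I will localize: on $\cB^+_R$ use Schauder/ADN regularity for the problem with the complementing pair $(\sB^3_i,\sB^3_3)$ (Lemma~\ref{lem:complementing-condition}), which makes the near part $C^{4,\gamma}$ up to the boundary. Outside, the far part has a kernel smooth up to $t=0$, so derivatives may be taken under the integral there.
\item For $v_k$: the natural route is Lemma~\ref{lem:Poissonformula}. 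It requires $f_k\in C^{4-k,\gamma}$ and the decay $O(|x|^{-k-\delta})$, but we only have smoothness and $L^p$ integrability. I would therefore cut off $f_k=\chi_R f_k+(1-\chi_R)f_k$. The compactly supported part falls exactly under Lemma~\ref{lem:Poissonformula}. For the tail, at points $x$ with $|x|<R/2$ the kernel is smooth across $t=0$, and computing $\sB^3_j$ of the kernel directly gives zero; this is the identity that defines the kernels.
\end{itemize}
Summing the pieces yields \eqref{bdry_value-Possion_kernel}. The delicate point is uniformly controlling the tails of $L^p$ data, which is handled by Hölder against the kernel tails $|y|^{-(n-k+3)}$; these lie in the dual space, since $(n-k+3)\cdot\frac{2n}{n-2k+3}>n$.
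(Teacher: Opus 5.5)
Your proof is correct in substance, but it follows a different route from the paper.

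The paper never splits the data. It applies the Kelvin transform $v\mapsto v_{0,1}$ and uses the covariance of the representation (Lemma \ref{lem:Kelvin}) to trade the mere $L^p$-integrability of $f,f_k$ at infinity for pointwise decay: $f_{0,1}=O(|X|^{-n-5})$ and $(f_k)_{0,1}=O(|x|^{3-2k-n})$. The price is a possible singularity at the inversion center. It then checks the boundary limits by dominated convergence and runs the Chen--Zhang argument behind Lemma \ref{lem:Poissonformula} away from that center. Finally it inverts back and removes the one exceptional boundary point by re-centering the inversion.

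You instead localize directly with cutoffs. Compactly supported pieces fall under Lemma \ref{lem:Poissonformula} and the standard Green-potential theory. The tails are smooth across $t=0$ over any bounded boundary region. They satisfy the homogeneous conditions because the kernels do so pointwise off the diagonal. This claim checks out: for example, $\partial_t^3u+3\Delta_x\partial_tu=0$ at $t=0$, $x\neq0$, for $u=t(t^2+|x|^2)^{-\frac{n-1}{2}}$. Differentiation under the integral is justified by H\"older against kernel tails in the dual space.

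What each buys: your route is more elementary and avoids conformal covariance entirely. The paper's route reuses the inversion machinery that drives Section \ref{sec:3}, and reduces everything to decaying data, which is the setting in which the existing Poisson-kernel theorem is stated.

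Two points to tighten.

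\emph{Kernel bounds.} Your kernel bounds are misstated, though the conclusion survives.
\begin{itemize}
\item $P^3_k(X-(y,0))=C\,t^{3-k}|X-(y,0)|^{-(n+3-2k)}$ is homogeneous of degree $k-n$. At fixed $t$ it decays like $|y|^{-(n+3-2k)}$, not $|y|^{-(n-k+3)}$.
\item For $k=2,3$ it is not in $L^q$ for every $q\ge1$.
\item What you actually need is membership at infinity in $L^{\frac{2n}{n-2k+3}}$. This holds, since $(n+3-2k)\cdot\frac{2n}{n-2k+3}=2n>n$.
\item Likewise, for $|x|<R/2$ and $|y|\ge R$, one has $|\nabla^m_X P^3_k|\lesssim|y|^{k-n-m}$, which lies in that space for every $m\ge0$.
\end{itemize}

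\emph{Near part of $v_0$.} ADN regularity presupposes the boundary conditions, so verify them first.
\begin{itemize}
\item Since $|\bar X-Y|\ge|X-Y|$ and $t,s\le|\bar X-Y|$, the explicit formulas give $|\nabla^3_XG^{(i,3)}(X,Y)|\lesssim|X-Y|^{-n}$ uniformly up to the boundary.
\item Hence $\sB^3_i$ and $\sB^3_3$ pass under the integral. They vanish there because $G^{(i,3)}(\cdot,Y)$ satisfies the homogeneous conditions.
\item Only then invoke boundary regularity for the fourth derivatives.
\item For $i=1$ this step is immediate: $v_0$ is the Newtonian potential of the Lipschitz even extension of $f$.
\end{itemize}
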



The main ingredient in the proof of Proposition \ref{prop:2} is the
following weak-type estimate for the Green--Poisson potential.
			\begin{lem}\label{lem:Weaknorm}
			Let $v$ be defined by \eqref{v-integralformula}, where  $f\in C^{\infty}(\overline{\R_{+}^{n+1}})\cap L^{\frac{2(n+1)}{n+5}}(\R_{+}^{n+1})$ and $f_k\in C^{\infty}(\Rn)\cap L^{\frac{2n}{n+2k-3}}(\Rn)$, $k\in\{i,3\}$. Then  $v$ belongs to $L_w^{\frac{2(n+1)}{n-3}}(\R^{n+1}_{+})$.  More precisely, there exists a constant $C>0$, depending only on $n$, such that, for every $\tau>0$,
				\[
				|\{X\in \R^{n+1}_{+}:|v(X)|>\tau \}|\leq C\tau ^{-\frac{2(n+1)}{n-3}}\Big(\|f\|_{L^{\frac{2(n+1)}{n+5}}(\R^{n+1}_{+})}^{\frac{2(n+1)}{n-3}}+\sum_{k=i,3}\|f_k\|_{L^{\frac{2n}{n+2k-3}}(\Rn)}^{\frac{2(n+1)}{n-3}}\Big).
				\]
				Equivalently,
\[
\|v\|_{L_w^{\frac{2(n+1)}{n-3}}(\R ^{n+1}_+)} \le C \Big( \|f\|_{L^{\frac{2(n+1)}{n+5}}(\R ^{n+1}_+)} + \sum_{k=i,3} \|f_k\|_{L^{\frac{2n}{n+2k-3}}(\R ^n)} \Big).
\]
			\end{lem}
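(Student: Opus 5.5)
The plan is to bound each of the three pieces of $v$ pointwise by a fractional-type (Riesz) potential and then invoke the weak-type Hardy--Littlewood--Sobolev inequality together with its restriction/extension version between $\R^{n+1}_+$ and $\Rn$. Since $L^{q}_w$ is a quasi-normed space with $\|g_1+g_2+g_3\|_{L^q_w}\le C(\|g_1\|_{L^q_w}+\|g_2\|_{L^q_w}+\|g_3\|_{L^q_w})$, it suffices to treat the interior Green term and each Poisson term separately. Set $q=\frac{2(n+1)}{n-3}$.

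\emph{Interior term.} By Lemma \ref{lem:Greenfunctions} and Remark \ref{rem:boundary-G-and-P}, $0\le G^{(i,3)}(X,Y)\le 2\Ga(X-Y)=C|X-Y|^{3-n}$. Hence $\big|\int G^{(i,3)}(X,Y)f(Y)\,\d Y\big|\le C\int_{\R^{n+1}}|X-Y|^{-(n+1-4)}|\tilde f|(Y)\,\d Y=C\,I_4|\tilde f|(X)$, where $\tilde f$ is the zero extension of $f$. The Riesz potential $I_4$ on $\R^{n+1}$ maps $L^p$ to $L^{q}$ (hence to $L^q_w$) with $\frac1q=\frac1p-\frac4{n+1}$. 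For $p=\frac{2(n+1)}{n+5}$ this gives $\frac1p-\frac4{n+1}=\frac{n+5-8}{2(n+1)}=\frac{n-3}{2(n+1)}$, which is the correct exponent.

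\emph{Boundary terms.} By Lemma \ref{lem:Poissonformula}, $|P_k^3(x-y,t)|\le C\,t^{3-k}|X-(y,0)|^{-(n+3-2k)}\le C|X-(y,0)|^{-(n-k)}$, using $t\le|X-(y,0)|$. This holds uniformly for $k=1,2,3$, so the $k$-th term is dominated by the extension operator
\[
E_k g(X)=\int_{\Rn}\frac{|g(y)|}{|X-(y,0)|^{n-k}}\,\d y .
\]
It remains to show $E_k:L^{p_k}(\Rn)\to L^q_w(\R^{n+1}_+)$ with $p_k=\frac{2n}{n+2k-3}$. The scaling check is $\frac{n+1}{q}=\frac{n}{p_k}-k$, i.e.\ $\frac{n-3}{2}=\frac{n+2k-3}{2}-k$, which holds. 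For the boundedness itself I would cite the known Hardy--Littlewood--Sobolev inequality on the half-space (Hang--Wang--Yan, Dou--Zhu), whose extension operator has kernel $|X-(y,0)|^{-(n-\lambda)}$. Alternatively, I would prove it by duality: the adjoint $R_k h(y)=\int_{\R^{n+1}_+}|X-(y,0)|^{k-n}h(X)\,\d X$ should map $L^{q'}(\R^{n+1}_+)$ into $L^{p_k'}(\Rn)$. To see this, write $\int_0^\infty\!\int|(x-y,t)|^{k-n}h(x,t)\,\d x\,\d t$ and bound the kernel by $|x-y|^{k-n}$ for the $x$-integration, which reduces matters to the $n$-dimensional HLS applied to $\int_0^\infty h(\cdot,t)\,\d t$. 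That direct route loses integrability in $t$, though. A cleaner alternative is Marcinkiewicz interpolation of a weak bound: split the kernel at radius $R$ and estimate the near part in $L^1$ and the far part in $L^\infty$ via Hölder, choosing $R$ depending on $\tau$. That gives exactly the weak-type estimate asserted, which is all that is needed.

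The main obstacle is this trace/extension HLS step for the boundary Poisson terms. I would carry out the standard truncation argument for a level $\tau$ with $\|g\|_{L^{p_k}}=1$. Write $E_kg\le E^{<R}+E^{>R}$. By Hölder, $E^{>R}\le\|g\|_{p_k}\big\|\,|(\cdot,t)|^{k-n}\chi_{>R}\big\|_{L^{p_k'}(\Rn)}\le CR^{k-n/p_k}$, and this is at most $\tau/2$ for a suitable $R(\tau)$. Integrating the near part over $\R^{n+1}_+$ gives $\|E^{<R}\|_{L^1}\le\|g\|_{1}CR^{k+1}$, but since $g$ is only in $L^{p_k}$, $L^1$ is unavailable. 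So I would instead use the $L^{p_k}$ integral in $x$ for each fixed $t\le R$ via Young's inequality (kernel in $L^1_x$ with norm $\lesssim R^{k}$), and then Chebyshev with exponent $p_k$, integrated over $t\in(0,R)$. Balancing the two pieces should reproduce the exponent $q$, because the scaling identity above forces it; I expect the bookkeeping here to be the only delicate part.
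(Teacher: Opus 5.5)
Your proposal is correct and follows essentially the route the paper intends. The paper omits the details and points to the Dou--Zhu half-space HLS and Lemma~2.7 of \cite{SX2016}; your version bounds $G^{(i,3)}$ by $2\Ga(X-Y)$ and $|P_k^3|$ by $C|X-(y,0)|^{k-n}$, uses weak-type HLS in $\R^{n+1}$ for the interior term, and handles each boundary term with the near/far kernel splitting (or a citation of the half-space HLS, or Lemma~\ref{lem:Gluck}). The bookkeeping you left open does close: normalize $\|g\|_{L^{p_k}}=1$ and take $R\sim\tau^{-2/(n-3)}$, so the far part is at most $\tau/2$ and the near part gives $|\{E^{<R}>\tau/2\}|\lesssim\tau^{-p_k}R^{kp_k+1}\simeq\tau^{-q}$, which holds precisely because $p_k(n+2k-3)=2n$.
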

\begin{proof}
The proof  is partially inspired by Dou and Zhu \cite{DZ2015-2,DZ2015-1} and analogous to Lemma 2.7 in \cite{SX2016}, so we omit  the details here. 
\end{proof}

We next record the covariance of the integral formulation under spherical
inversion, which will be used in the moving-spheres argument.

			For $X_0=(x_0,0)\in\pa \R ^{n+1}_+$ and $\lam>0$, we define the spherical inversion centered at $X_0$ with radius $\lam$ by
			\be \label{Spherical-inversion}
			X^\lam
			=(x^\lam,t^\lam)=
			\cI_{X_0,\lam}(X)
			:=
			X_0+\frac{\lam^2(X-X_0)}{|X-X_0|^2},
			\quad
			X=(x,t)\in\overline{\R^{n+1}_+}\setminus\{X_0\}.
			\ee
		Since $t^\lam = \frac{\lam^2t}{|X-X_0|^2}$, the inversion $\cI_{X_0,\lam}$ preserves $\R^{n+1}_+$ and its boundary
$\partial\R^{n+1}_+$. Its restriction to the boundary $\pa \R_+^{n+1}=\Rn$ is given by
			\be \label{bdry-Spherical-inversion}
			x^\lam = x_0+\frac{\lam^2(x-x_0)}{|x-x_0|^2}, \quad x\in\Rn\setminus\{x_0\}.
			\ee
	For $u\in C^\infty(\overline{\R ^{n+1}_+})$, we define its Kelvin transform with respect to $(X_0,\lam)$ by
\[
u_{X_0,\lam}(X) = \Big( \frac{\lam}{|X-X_0|}\Big)^{n-3} u(X^\lam), \quad X\in\overline{\R^{n+1}_+}\setminus\{X_0\}.
\]
For the interior datum $f$ and the boundary data $f_i,f_3$ in
\eqref{v-integralformula}, define
\[
f_{X_0,\lam}(X) = \Big( \frac{\lam}{|X-X_0|} \Big)^{n+5} f(X^\lam), \quad X\in\R ^{n+1}_+, \]
and, for $k\in\{i,3\}$, 
\[ (f_k)_{X_0,\lam}(x) = \Big( \frac{\lam}{|x-x_0|} \Big)^{n+2k-3} f_k(x^\lam), \quad x\in\Rn\setminus\{x_0\}.
\]

With the above notation, the integral representation
\eqref{v-integralformula} is covariant under the Kelvin transform.
			\begin{lem}\label{lem:Kelvin}
				Let $v$ be defined by \eqref{v-integralformula}. Then, for every
$X_0\in\partial\R^{n+1}_+$, $\lam>0$, and $X\in\R^{n+1}_+$, one has
		\[
					v_{X_0,\lam }(X)=\int_{\R^{n+1}_{+}}G^{(i,3)}(X,Y)f_{X_0,\lam }(Y)\, \d Y+\sum_{k=i,3}\int_{\Rn}P^3_k(X-(y,0))(f_k)_{X_0,\lambda}(y)\, \d y.
			\]
			\end{lem}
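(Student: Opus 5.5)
The identity should follow from a change of variables $Y=Z^\lam$ (respectively $y=z^\lam$) in each integral of \eqref{v-integralformula} evaluated at $X^\lam$. This works once we know that every kernel transforms under $\cI_{X_0,\lam}$ with a weight of the form $(|X-X_0|/\lam)^{n-3}\cdot(|Y-X_0|/\lam)^{\beta}$. The exponents must then combine with the Jacobians, $\d(Z^\lam)=(\lam/|Z-X_0|)^{2(n+1)}\d Z$ in the interior and $\d(z^\lam)=(\lam/|z-x_0|)^{2n}\d z$ on the boundary, to give exactly the weights $n+5$ and $n+2k-3$ in the definitions of $f_{X_0,\lam}$ and $(f_k)_{X_0,\lam}$.

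\textbf{Step 1: the interior kernel.} I will use the standard inversion identity
\[
|X^\lam-Y^\lam|=\frac{\lam^2|X-Y|}{|X-X_0||Y-X_0|}.
\]
Since $X_0\in\pa\R^{n+1}_+$, reflection across $\{t=0\}$ commutes with $\cI_{X_0,\lam}$, so $\overline{X^\lam}=(\bar X)^\lam$. Applying the identity to $\bar X$, and using $|\bar X-X_0|=|X-X_0|$, gives the same identity with $\bar X$ in place of $X$. Two consequences follow.
\begin{itemize}
\item The ratio $|X-Y|^2/|\bar X-Y|^2$ is invariant under the inversion.
\item Both $\Ga(X-Y)$ and $\Ga(\bar X-Y)$ scale by the factor $(|X-X_0||Y-X_0|/\lam^2)^{n-3}$.
\end{itemize}
From the explicit formulas in Lemma \ref{lem:Greenfunctions} I then get
\[
G^{(i,3)}(X^\lam,Y^\lam)=\Big(\frac{|X-X_0|}{\lam}\Big)^{n-3}\Big(\frac{|Y-X_0|}{\lam}\Big)^{n-3}G^{(i,3)}(X,Y).
\]
Substituting $Y=Z^\lam$ and multiplying by $(\lam/|X-X_0|)^{n-3}$ leaves the weight $(\lam/|Z-X_0|)^{2(n+1)-(n-3)}=(\lam/|Z-X_0|)^{n+5}$. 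This is exactly $f_{X_0,\lam}$.

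\textbf{Step 2: the Poisson kernels.} I use Lemma \ref{lem:Poissonformula} together with $t^\lam=\lam^2t/|X-X_0|^2$ and the distance identity applied with $Y=(y,0)$.
\begin{itemize}
\item For $P_3^3\propto|X-(y,0)|^{3-n}$, the factor is $(|X-X_0||y-x_0|/\lam^2)^{n-3}$. The boundary weight is then $2n-(n-3)=n+3=n+2\cdot3-3$.
\item For $P_1^3\propto t^2|X-(y,0)|^{-(n+1)}$, the factor is $(\lam/|X-X_0|)^4(|X-X_0||y-x_0|/\lam^2)^{n+1}=(|X-X_0|/\lam)^{n-3}(|y-x_0|/\lam)^{n+1}$. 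The weight is $2n-(n+1)=n-1$.
\item For $P_2^3\propto t|X-(y,0)|^{-(n-1)}$, the factor is $(|X-X_0|/\lam)^{n-3}(|y-x_0|/\lam)^{n-1}$. The weight is $2n-(n-1)=n+1$.
\end{itemize}
These match $(f_k)_{X_0,\lam}$ for $k=1,2,3$.

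\textbf{Step 3: justification.} The change of variables is legitimate because the integrands are absolutely integrable. This holds by the $L^p$ assumptions on $f,f_k$, the kernel bounds, and the convergence already used in Lemma \ref{lem:Weaknorm} and Proposition \ref{prop:2}. Moreover, the weighted data $f_{X_0,\lam}$ and $(f_k)_{X_0,\lam}$ have the same $L^{\frac{2(n+1)}{n+5}}$ and $L^{\frac{2n}{n+2k-3}}$ norms as the original data, since these exponents are the conformally invariant ones. The isolated singular point $X_0$, respectively $x_0$, has measure zero.

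I expect Step 1 to be the only genuinely delicate point. One must check that the reflected term in $G^{(i,3)}$ transforms with the same weight as $\Ga(X-Y)$, which hinges on the center $X_0$ lying on the boundary. The bracket factor in $G^{(2,3)}$ must also be shown to be inversion-invariant. The rest is exponent bookkeeping.
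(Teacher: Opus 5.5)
Your proposal is correct and takes the same approach as the paper. The paper only invokes the conformal covariance of $G^{(i,3)}$ and $P_k^3$ under $\cI_{X_0,\lam}$, together with the transformation laws for $f,f_i,f_3$, and omits the computation. Your Steps 1--2 supply that omitted computation, including the key fact $\overline{X^\lam}=(\bar X)^\lam$ for a boundary center, and your resulting weights $n+5$ and $n+2k-3$ are right.
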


			\begin{proof}
				The identity follows from the conformal covariance of the Green kernel and the
				Poisson kernels under the spherical inversion $\cI_{X_0,\lam}$,
				together with the transformation laws for $f$, $f_i$, and $f_3$. 
				Hence we omit the details here.
			\end{proof}

With the preceding estimates and covariance properties at hand, we can now prove Proposition \ref{prop:2}.
			\begin{proof}[Proof of Proposition \ref{prop:2}]
				We first take $X_0=0$ and $\lam=1$. By the definitions of the Kelvin transforms, one checks directly that,
\[
f_{0,1}\in C^{\infty}(\overline{{\R}_{+}^{n+1}}\backslash\{0\})\cap L^{\frac{2(n+1)}{n+5}}(\R_{+}^{n+1}),\quad f_{0,1}(X)=O(|X|^{-n-5}) \quad \text{ as }\,|X|\to+\infty,
\]
				and, for $k\in\{i,3\}$,
                \[
                (f_{k})_{0,1}\in  C^{\infty}(\Rn \backslash\{0\})\cap L^{\frac{2n}{n+2k-3}}(\Rn), \quad
				(f_{k})_{_{0,1}}(x)=O(|x|^{3-2k-n}) \quad \text{ as }\, |x|\to+\infty.
				\]
				For each fixed boundary point $x\ne0$, the preceding regularity and decay allow us to apply the Lebesgue dominated convergence theorem and obtain
				\begin{align*} \lim_{t\to0^+} \int_{\Rn} \frac{t(f_3)_{0,1}(y)} {(t^2+|x-y|^2)^{\frac{n-1}{2}}}\,\d y &=0,\\ \lim_{t\to0^+} \int_{\Rn} P_0^3(x-y,t)(f_k)_{0,1}(y)\,\d y &=(f_k)_{0,1}(x),\\ \lim_{t\to0^+} \int_{\Rn} P(x-y,t)(f_k)_{0,1}(y)\,\d y &=(f_k)_{0,1}(x). \end{align*} Here $P(X)=\frac{2}{|\Sn |}\frac{t}{|X|^{n+1}}$ is the classical Poisson kernel on $\R ^{n+1}_+$. Combining these boundary limits with Lemma \ref{lem:Kelvin} and arguing
as in \cite[Theorem 1.1]{CZ202406}, we obtain
\[
\left\{ \begin{aligned} &\Delta^2 v_{0,1}=f_{0,1} &&\text{ in }\,\R ^{n+1}_+,\\ &\sB_i^3v_{0,1}=(f_i)_{0,1} &&\text{ on }\,\pa \R ^{n+1}_+\setminus\{0\},\\ &\sB_3^3v_{0,1}=(f_3)_{0,1} &&\text{ on }\,\pa \R ^{n+1}_+\setminus\{0\}. \end{aligned} \right.
\]
				Applying the inverse Kelvin transform and using the conformal covariance of
the biharmonic equation and the boundary operators, we conclude that
\[
\left\{\begin{aligned} &\Delta^2 v=f &&\text{ in }\,\R_{+}^{n+1},\\ &\sB^3_i(v)=f_{i}&&\text{ on }\, \pa \R^{n+1}_{+}\backslash\{0\},\\ &\sB_3^3(v)=f_{3} &&\text{ on }\, \pa \R^{n+1}_{+}\backslash\{0\}. \end{aligned}\right.
\]

It remains to recover the boundary identities at the origin.				Take an arbitrary $X_0\in\pa \R ^{n+1}_+\setminus\{0\}$ and $\lam=1$. Applying the same argument with this new center shows that $v$ satisfies the same boundary value problem on $\overline{\R ^{n+1}_+}\setminus\{X_0\}$. Since the center can be chosen away from any prescribed boundary point, the boundary identities hold on the whole $\pa \R ^{n+1}_+$. Hence $v$ satisfies \eqref{bdry_value-Possion_kernel}.
			\end{proof}
            
We are now in a position to prove Proposition \ref{prop:integral-representation}.

			\begin{proof}[Proof of Proposition \ref{prop:integral-representation}]
				Define
\[
v(X)=\int_{\R^{n+1}_{+}}G^{(i,3)}(X,Y)u^{p^{*}}(Y)\, \d Y+\sum_{k=i,3}\int_{\Rn}c_kP^3_k(X-(y,0))u^{p_k^{*}}(y,0)\, \d y,
\]
	and			set $w:=u-v$. By Proposition \ref{prop:2}, $v$ satisfies the same inhomogeneous boundary value problem as $u$. Hence $w$ solves
				\be\label{w-eq}
                \left\{\begin{aligned}
					&\Delta^2 w=0 &&\text{ in }\, \R_+^{n+1},\\
					&\sB^3_iw=0&&\text{ on }\,  \pa \R^{n+1}_{+},\\
					&\sB^3_3w=0 &&\text{ on }\, \pa \R^{n+1}_{+}.
				\end{aligned}\right.
				\ee

			We first show that $w$ vanishes at infinity. Consider the Kelvin transform
with center $0$ and radius $1$,
\[ 
w_{0,1}(X) = |X|^{3-n}w\Big(\frac{X}{|X|^2}\Big) .
\] By the conformal covariance of the biharmonic equation and the boundary
operators, $w_{0,1}$ satisfies the same homogeneous boundary value problem
on $\overline{\R ^{n+1}_+}\setminus\{0\}$.

				We claim that \[ w_{0,1}\in L^p(\cB_1^{+}) \quad\text{ for every }\,1\le p<\frac{2(n+1)}{n-3}. \]
				Indeed, by Lemma \ref{lem:Weaknorm}, for every $p<\frac{2(n+1)}{n-3}$, we have
				\begin{align*}
					&\,\int_{B_1^{+}}|v_{0,1}(X)|^p\,\d X\\ =&\, p\int_0^\infty| \{X\in \cB_1^{+}:\ |v_{0,1}(X)|>\tau\}| \tau^{p-1}\,  \d \tau\\ \le&\, |\cB_1^{+}| + C \Big( \|(u_{0,1})^{p^*}\|_{L^{\frac{2(n+1)}{n+5}}(\R ^{n+1}_+)} + \sum_{k=i,3} \|(u_{0,1})^{p_k^*}\|_{L^{\frac{2n}{n+2k-3}}(\Rn)} \Big) \int_1^\infty \tau^{p-\frac{2(n+1)}{n-3}-1}\,  \d \tau\\ \le&\, C \Big( 1+ \|u_{0,1}\|_{L^{\frac{2(n+1)}{n-3}}(\R ^{n+1}_+)}^{p^*} + \sum_{k=i,3} \|u_{0,1}(\cdot,0)\|_{L^{\frac{2n}{n-3}}(\Rn)}^{p_k^*} \Big) <+\infty.
				\end{align*}
				Here we used the conformal invariance of the critical norms:
\[
\|u_{0,1}\|_{L^{\frac{2(n+1)}{n-3}}(\R ^{n+1}_+)} = \|u\|_{L^{\frac{2(n+1)}{n-3}}(\R ^{n+1}_+)}\quad \text{ and }\quad \|u_{0,1}(\cdot,0)\|_{L^{\frac{2n}{n-3}}(\Rn)} = \|u(\cdot,0)\|_{L^{\frac{2n}{n-3}}(\Rn)}. \] Since $u_{0,1}\in L^{\frac{2(n+1)}{n-3}}(\cB_1^{+})$, it follows that $w_{0,1}=u_{0,1}-v_{0,1}\in L^p(\cB_1^{+})$ for every $p<\frac{2(n+1)}{n-3}$. Applying the local elliptic estimate in Lemma \ref{lem:elliptic-estimate} to $w_{0,1}$ on boundary annuli, we obtain, for sufficiently small $r>0$, \[ \|w_{0,1}\|_{L^\infty(\cB_{3r/4}\setminus \cB_{r/2}^{+})} \le Cr^{-(n+1)} \|w_{0,1}\|_{L^1(\cB_r^{+}\setminus \cB_{r/4}^{+})}.
\]
				By H\"older's inequality,
\[
\|w_{0,1}\|_{L^\infty(\cB_{3r/4}\setminus \cB_{r/2}^{+})} \le Cr^{-\frac{n+1}{p}} \|w_{0,1}\|_{L^p(\cB_r^{+}\setminus \cB_{r/4}^{+})}. 
\] Since $w_{0,1}\in L^p(\cB_1^{+})$, this implies 
\[ 
\|w_{0,1}\|_{L^\infty(\cB_{3r/4}\setminus \cB_{r/2}^{+})} = O(r^{-\frac{n+1}{p}}) \quad\text{ as }\,r\to0^{+}. 
\] 
Transforming back to the original variables, we obtain
\[ |w(X)| = O( |X|^{3-n+\frac{n+1}{p}} ) \quad\text{ as }\, |X|\to+\infty . 
\] 
Choose $\frac{n+1}{n-3}<p<\frac{2(n+1)}{n-3}$. Then $3-n+\frac{n+1}{p}<0$, and hence $ w(X)\to0 $ as $|X|\to+\infty$. Since $w$ is smooth up to the boundary, it follows that $w$ is bounded on
$\overline{\R^{n+1}_+}$. 

Finally, we will  show that a bounded solution of the above homogeneous
boundary value problem \eqref{w-eq} is constant. Applying the elliptic estimate to $w$ on large half-balls, there exists a constant $C>0$, independent of $R$, such that \be \label{w-decay-1} \|\nabla w\|_{L^\infty(\cB_R^{+})} \le CR^{-1}\|w\|_{L^\infty(\cB_{2R}^{+})} \quad\text{ for }\,R\ge1. \ee Since $w$ is bounded, letting $R\to+\infty$ in \eqref{w-decay-1} yields 
\[ 
\nabla w\equiv0 \quad\text{ in }\,\R ^{n+1}_+.
\] Thus $w$ is constant. Since $w(X)\to0$ as $|X|\to+\infty$, this constant must be zero. Hence $w\equiv0$, and therefore $u=v$, which proves
\eqref{Integral-equ}.
\end{proof}

\subsection{Proof of Theorem \ref{thm:main0}} 

Since $\kap>0$ is fixed, we first normalize the interior equation. Set $\tilde u=\kap^{\frac{n-3}{8}}u$.  Then $\Delta^2\tilde u=\tilde u^{p^*}$, while the boundary constants become $\tilde c_k=\kap^{-\frac{k}{4}}c_k$, $k=i,3$. Since the sign assumptions and the finiteness of the volume integrals are preserved, we henceforth assume $\kappa=1$ and omit the tildes. 

Let $v(x):=u(x,0)$, $x\in\Rn$. By Proposition \ref{prop:integral-representation}, Lemma \ref{lem:Poissonformula}, and Remark \ref{rem:boundary-G-and-P}, the pair $(u,v)$
satisfies the integral system \be \label{System-equation}
\left\{ \begin{aligned} u(X) =&\, \int_{\R ^{n+1}_+} G^{(i,3)}(X,Y)u^{p^*}(Y)\,\d Y +\sum_{k=i,3} c_k\int_{\Rn} P_k^3(X-(y,0))v^{p_k^*}(y)\, \d y,&&\forall\, X\in \R_{+}^{n+1},\\ v(x) =&\, 2\int_{\R ^{n+1}_+} \Ga((x,0)-Y)u^{p^*}(Y)\,\d Y + \frac{c_3}{2} \int_{\Rn} \Ga^n(x-y)v^{p_3^*}(y)\,\d y, &&\forall\, x\in \Rn. \end{aligned} \right. \ee Here $i\in\{1,2\}$, $c_i\le0$, and $c_3\ge0$. Moreover, \[ (u,v)\in ( C(\overline{\R ^{n+1}_+}) \cap L^{\frac{2(n+1)}{n-3}}(\R ^{n+1}_+) ) \times ( C(\Rn) \cap L^{\frac{2n}{n-3}}(\Rn) ).
\]

		We now apply the method of moving spheres to \eqref{System-equation},
following the integral approach used in related conformally invariant
problems; see Dou--Zhu \cite{DZ2015-2} and Tang--Dou \cite{TD2018}.

		Fix $X_0=(x_0,0)\in\partial\R^{n+1}_+$ and $\lam>0$. Let
$X^\lam=\cI_{X_0,\lam}(X)$ be the spherical inversion defined in
\eqref{Spherical-inversion}, and let $x^\lam$ denote its restriction to
the boundary as in \eqref{bdry-Spherical-inversion}. Set
\[
\cB_\lam^+(X_0) := \cB_\lam(X_0)\cap\R ^{n+1}_+\quad \text{ and }\quad \Si_{X_0,\lam} := \R ^{n+1}_+\setminus\overline{\cB_\lam^+(X_0)}.
\]
			Their boundary parts are denoted by
\[
\pa '\cB_\lam^+(X_0) = B_\lam(x_0) \quad \text{ and }\quad \Si_{x_0,\lam} := \Rn\setminus\overline{B_\lam(x_0)}.
\]
The Kelvin transforms of $u$ and $v$ are given by
\[
u_{X_0,\lam}(X) := \Big(\frac{\lam}{|X-X_0|}\Big)^{n-3} u(X^\lam), \quad X\in\R ^{n+1}_+,
\]
			and
\[
v_{X_0,\lam}(x) := \Big(\frac{\lam}{|x-x_0|}\Big)^{n-3} v(x^\lam), \quad x\in\Rn\setminus\{x_0\}.
\]

The conformal invariance of the critical exponents implies that
\eqref{System-equation} is preserved under these Kelvin transforms.
For later use, we record the corresponding integral identities.
			\begin{lem}\label{lem:Identity}
				Let $(u,v)$ be a positive solution of \eqref{System-equation}. Then, for
every $X_0=(x_0,0)\in\partial\R^{n+1}_+$ and every $\lam>0$, one has
				\begin{align*}
					\left(\frac{\lam}{|X-X_0|}\right)^{n-3}
					\int_{\cB_\lam^+(X_0)}
					G^{(i,3)}(X^\lam,Y)u^{p^*}(Y)\,\d Y
					&=
					\int_{\Si_{X_0,\lam}}
					G^{(i,3)}(X,Y)
					(u_{X_0,\lam}(Y))^{p^*}\,\d Y,
					\\
					\left(\frac{\lam}{|X-X_0|}\right)^{n-3}
					\int_{B_\lam(x_0)}
					P_i^3(X^\lam-(y,0))v^{p_i^*}(y)\,\d y
					&=
					\int_{\Si_{x_0,\lam}}
					P_i^3(X-(y,0))
					(v_{X_0,\lam}(y))^{p_i^*}\,\d y,
					\\
					\left(\frac{\lam}{|X-X_0|}\right)^{n-3}
					\int_{B_\lam(x_0)}
					P_3^3(X^\lam-(y,0))v^{p_3^*}(y)\,\d y
					&=
					\int_{\Si_{x_0,\lam}}
					P_3^3(X-(y,0))
					(v_{X_0,\lam}(y))^{p_3^*}\,\d y,
					\\
					\Big(\frac{\lam}{|x-x_0|}\Big)^{n-3}
					\int_{\cB_\lam^+(X_0)}
					\Ga((x^\lam,0)-Y)u^{p^*}(Y)\,\d Y
					&=
					\int_{\Si_{X_0,\lam}}
					\Ga((x,0)-Y)
					(u_{X_0,\lam}(Y))^{p^*}\,\d Y,
					\\
					\Big(\frac{\lam}{|x-x_0|}\Big)^{n-3}
					\int_{B_\lam(x_0)}
					\Ga^n(x^\lam-y)v^{p_3^*}(y)\,\d y
					&=
					\int_{\Si_{x_0,\lam}}
					\Ga^n(x-y)
					(v_{X_0,\lam}(y))^{p_3^*}\,\d y.
				\end{align*}
				The same identities remain valid if the regions
				$\cB_\lam^+(X_0)$ and $\Si_{X_0,\lam}$, or
				$B_\lam(x_0)$ and $\Si_{x_0,\lam}$, are interchanged.
				Consequently, the Kelvin transforms $(u_{X_0,\lam},v_{X_0,\lam})$ also
				satisfy the integral system
				\[	\left\{
				\begin{aligned}
					u_{X_0,\lam}(X)
					=&\,
					\int_{\R ^{n+1}_+}
					G^{(i,3)}(X,Y)
					(u_{X_0,\lam}(Y))^{p^*}\,\d Y
					+\sum_{k=i,3}\int_{\Rn}
					c_kP_k^3(X-(y,0))
					(v_{X_0,\lam}(y))^{p_k^*}	\,\d y,\\
					v_{X_0,\lam}(x)
					=&\,
					2\int_{\R ^{n+1}_+}
					\Ga((x,0)-Y)
					(u_{X_0,\lam}(Y))^{p^*}\,\d  Y
					+
					\frac{c_3}{2}
					\int_{\Rn}
					\Ga^n(x-y)
					(v_{X_0,\lam}(y))^{p_3^*}\,\d y.
				\end{aligned}
				\right.
				\]
			\end{lem}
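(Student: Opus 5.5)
The plan is to reduce every identity to one pointwise covariance of the kernels under the spherical inversion $\cI_{X_0,\lam}$. After that, a change of variables $Y\mapsto Y^\lam$ (respectively $y\mapsto y^\lam$) finishes each case, with the Jacobians absorbed by the critical exponents.

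\textbf{Step 1: kernel covariance.} Write $X^\lam=\cI_{X_0,\lam}(X)$. The basic fact is
\[
|X^\lam-Y^\lam|=\frac{\lam^2|X-Y|}{|X-X_0|\,|Y-X_0|}.
\]
Since $X_0\in\pa\R^{n+1}_+$, the inversion commutes with the reflection $X\mapsto\bar X$. Hence the same identity holds with $\bar X$ in place of $X$, and the ratio $|X-Y|^2/|\bar X-Y|^2$ is invariant. From the explicit formulas in Lemma~\ref{lem:Greenfunctions} we then obtain
\[
G^{(i,3)}(X^\lam,Y^\lam)=\Big(\frac{|X-X_0|}{\lam}\Big)^{n-3}\Big(\frac{|Y-X_0|}{\lam}\Big)^{n-3}G^{(i,3)}(X,Y),
\]
and the same relation holds for $\Ga((x,0)-Y)$ and for $\Ga^n(x-y)$.

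For the Poisson kernels of Lemma~\ref{lem:Poissonformula} we use $t^\lam=\lam^2t/|X-X_0|^2$. Each factor $t$ carries the weight $|X-X_0|^{-2}\lam^2$, so $P_k^3(X-(y,0))$, which is homogeneous of degree $k-n$, satisfies
\[
P_k^3(X^\lam-(y^\lam,0))=\Big(\frac{|X-X_0|}{\lam}\Big)^{n-3}\Big(\frac{|y-x_0|}{\lam}\Big)^{n+3-2k}P_k^3(X-(y,0)).
\]
For $k=3$ this is immediate. For $k=1,2$ it needs a short bookkeeping of the powers of $t$.

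\textbf{Step 2: change of variables.} In the first identity, substitute $Y=Z^\lam$ with $Z\in\Si_{X_0,\lam}$, so that $Z^\lam$ ranges over $\cB^+_\lam(X_0)$. The Jacobian is $\d Y=(\lam/|Z-X_0|)^{2(n+1)}\d Z$. Apply Step 1 with $X$ replaced by $X^\lam$, using $(X^\lam)^\lam=X$. The kernel contributes the factor $(\lam/|X-X_0|)^{n-3}$, which cancels the prefactor, together with $(\lam/|Z-X_0|)^{n-3}$. Combining the powers of $\lam/|Z-X_0|$ gives
\[
2(n+1)-(n-3)=n+5=(n-3)p^*,
\]
which is exactly the weight that turns $u^{p^*}(Z^\lam)$ into $(u_{X_0,\lam}(Z))^{p^*}$.

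The boundary identities work the same way, using the boundary Jacobian $(\lam/|z-x_0|)^{2n}$. For $P_k^3$ the exponents combine as
\[
2n-(n+3-2k)=n+2k-3=(n-3)p_k^*.
\]
For $\Ga^n$ they combine as $2n-(n-3)=n+3=(n-3)p_3^*$.

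Because the inversion is an involution exchanging the two regions, the interchanged identities follow by the same substitution.

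\textbf{Step 3: the system for the Kelvin transforms.} Evaluate \eqref{System-equation} at $X^\lam$ and multiply by $(\lam/|X-X_0|)^{n-3}$. Split each integral into its parts over $\cB^+_\lam(X_0)$ and $\Si_{X_0,\lam}$, and over $B_\lam(x_0)$ and $\Si_{x_0,\lam}$. Applying the identities and their interchanged versions turns each piece into an integral over the complementary region with the Kelvin-transformed densities. Summing the pieces recovers integrals over all of $\R^{n+1}_+$ and $\Rn$, which gives the stated system.

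The main obstacle is the bookkeeping in Step 1 for $G^{(i,3)}$ and for $P_1^3,P_2^3$. There one must check that the reflected terms and the powers of $t$ transform with exactly the right weights; everything else is mechanical.
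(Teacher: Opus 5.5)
Your proposal is correct and takes the same route as the paper. The paper only sketches the argument: the change of variables under $\cI_{X_0,\lam}$, together with the conformal covariance of $G^{(i,3)}$, $P_k^3$, $\Ga$, $\Ga^n$ and the critical exponents; your covariance laws and the exponent counts $n+5=(n-3)p^*$, $n+2k-3=(n-3)p_k^*$, $n+3=(n-3)p_3^*$ supply exactly the omitted details. (One cosmetic slip in Step~2: the kernel contributes $(|X-X_0|/\lam)^{n-3}$ and $(|Z-X_0|/\lam)^{n-3}$, not their reciprocals, although your net exponent $2(n+1)-(n-3)$ is right.)
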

			\begin{proof}
				The identities follow from the change of variables induced by the spherical inversion $\cI_{X_0,\lam}$, together with the conformal covariance of the Green kernel, the Poisson kernels, and the critical nonlinearities. The calculation is identical in spirit to that in Lemma \ref{lem:Kelvin}; hence we omit the details.
			\end{proof}

		For the remainder of this subsection, when the center 
$X_0=(x_0,0)\in\partial\R^{n+1}_+$ is fixed, we write, for simplicity,
\[
u_{\lam} :=u_{X_0,\lam}\quad \text{ and }\quad v_\lam:=v_{X_0,\lam}.
\]
Using Lemma \ref{lem:Identity}, we obtain the following difference identities, whose kernels have definite signs.
			\begin{lem}\label{lem:Difference}
			Let $(u,v)$ be a positive solution of the integral system
\eqref{System-equation}.  Then, for $X\in\Si_{X_0,\lam}$, one has
\[
u_{\lam} (X)-u(X)=I_1(X)+I_2(X)+I_3(X),
\]
				where
				\begin{align*}
					I_1(X)
					=&\,
					\int_{\Si_{X_0,\lam}}
					\cG^{(i,3)}(X,Y)
					[
					u_{\lam} ^{p^*}(Y)-u^{p^*}(Y)
					]\, \d Y,\\
					I_2(X)
					=&\,
					c_i\int_{\Si_{x_0,\lam}}
					P_i(X,y)
					[
					v_\lam^{p_i^*}(y)-v^{p_i^*}(y)
					]\, \d y,\\
					I_3(X)
					=&\,
					c_3\int_{\Si_{x_0,\lam}}
					P_3(X,y)
					[
					v_\lam^{p_3^*}(y)-v^{p_3^*}(y)
					]\, \d y.
				\end{align*}
				Similarly, for $x\in\Si_{x_0,\lam}$,
\[
v_\lam(x)-v(x)=II_1(x)+II_2(x),
\]
				where
				\begin{align*}
					II_1(x)
					=&\,
					2\int_{\Si_{X_0,\lam}}
					\cQ(x,Y)
					[
					u_{\lam} ^{p^*}(Y)-u^{p^*}(Y)
					]\, \d  Y,\\
					II_2(x)
					=&\,
					\frac{c_3}{2}\int_{\Si_{x_0,\lam}}
					\cQ^n(x,y)
					[
					v_\lam^{p_3^*}(y)-v^{p_3^*}(y)
					]\, \d  y.
				\end{align*}
				Here the kernels are defined by
				\begin{align*}
					\cG^{(i,3)}(X,Y)
					:=&\,
					G^{(i,3)}(X,Y)
					-
					\Big(\frac{\lam}{|X-X_0|}\Big)^{n-3}
					G^{(i,3)}(X^\lam,Y),\\
					P_i(X,y)
					:=&\,
					P_i^3(X-(y,0))
					-
					\Big(\frac{\lam}{|X-X_0|}\Big)^{n-3}
					P_i^3(X^\lam-(y,0)),\\
					P_3(X,y)
					:=&\,
					P_3^3(X-(y,0))
					-
					\Big(\frac{\lam}{|X-X_0|}\Big)^{n-3}
					P_3^3(X^\lam-(y,0)),\\
					\cQ(x,Y)
					:=&\,
					\Ga((x,0)-Y)
					-
					\Big(\frac{\lam}{|x-x_0|}\Big)^{n-3}
					\Ga((x^\lam,0)-Y),\\
					\cQ^n(x,y)
					:=&\,
					\Ga^n(x-y)
					-
					\Big(\frac{\lam}{|x-x_0|}\Big)^{n-3}
					\Ga^n(x^\lam-y).
				\end{align*}
			Moreover,
\[
\cG^{(i,3)}(X,Y)>0,\quad P_i(X,y)<0,\quad P_3(X,y)>0,
\]
				for $X,Y\in\Si_{X_0,\lam}$, $X\ne Y$, and $y\in\Si_{x_0,\lam}$, while
\[
\cQ(x,Y)>0,\quad \cQ^n(x,y)>0
\]
				for $x,y\in\Si_{x_0,\lam}$, $x\ne y$, and
				$Y\in\Si_{X_0,\lam}$.
			\end{lem}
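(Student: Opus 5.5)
The plan is to derive the difference identities by splitting every integral in \eqref{System-equation} into the parts over $\cB_\lam^+(X_0)$ (resp.\ $B_\lam(x_0)$) and over $\Si_{X_0,\lam}$ (resp.\ $\Si_{x_0,\lam}$). Then Lemma \ref{lem:Identity} moves the inner pieces to the outer region. The sign claims follow from one elementary inversion identity together with a monotonicity property of each kernel.

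\emph{Step 1: the identities.} Fix $X\in\Si_{X_0,\lam}$. Write $u(X)$ using the first equation of \eqref{System-equation}, splitting each integral into the inner region and $\Si$. Apply the \lq\lq interchanged'' form of Lemma \ref{lem:Identity} to turn the inner pieces into integrals over $\Si$ with kernel $(\lam/|X-X_0|)^{n-3}G^{(i,3)}(X^\lam,\cdot)$ (resp.\ $P_k^3(X^\lam-(\cdot,0))$) and densities $u_\lam^{p^*}$, $v_\lam^{p_k^*}$.

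Similarly, write $u_\lam(X)=(\lam/|X-X_0|)^{n-3}u(X^\lam)$ using the same equation evaluated at $X^\lam$. Split the integrals in the same way and convert the inner pieces with Lemma \ref{lem:Identity}; this produces integrals over $\Si$ with kernel $G^{(i,3)}(X,\cdot)$ and densities $u_\lam^{p^*}$, $v_\lam^{p_k^*}$.

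Subtracting, the four types of terms pair off, and each paired term carries the factor $u_\lam^{p^*}-u^{p^*}$ (resp.\ $v_\lam^{p_k^*}-v^{p_k^*}$) against the kernels $\cG^{(i,3)}$, $P_i$, $P_3$. This gives $I_1+I_2+I_3$. The same bookkeeping on the second equation, with $\Ga$ and $\Ga^n$, gives $II_1+II_2$.

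\emph{Step 2: the basic geometric identity.} Since $X_0\in\partial\R^{n+1}_+$, reflection $X\mapsto\bar X$ commutes with $\cI_{X_0,\lam}$, and $|\bar X-X_0|=|X-X_0|$. For any $Z$ (we will use $Z=Y$, $Z=\bar Y$, or $Z=(y,0)$), a direct expansion gives
\[
\frac{|X-X_0|^2}{\lam^2}|X^\lam-Z|^2-|X-Z|^2=\frac{(|X-X_0|^2-\lam^2)(|Z-X_0|^2-\lam^2)}{\lam^2}=:A.
\]
We have $A>0$ when $X,Z$ both lie outside $\overline{\cB_\lam(X_0)}$. Note that $A$ is the same for $Z=Y$ and $Z=\bar Y$.

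Writing $d=|X-Y|^2$ and $D=|\bar X-Y|^2$, every kernel above has the form $K(X,Y)=t^m k(d,D)$ with $k$ homogeneous of the right degree. Using $t^\lam=\lam^2t/|X-X_0|^2$, the conformal weight $(\lam/|X-X_0|)^{n-3}$ is exactly absorbed, and
\[
\Big(\frac{\lam}{|X-X_0|}\Big)^{n-3}K(X^\lam,Y)=t^m k(d+A,D+A).
\]
For instance, for $P_1^3$ this reads $t^2(d+A)^{-(n+1)/2}$. Hence each kernel difference equals $t^m[k(d,D)-k(d+A,D+A)]$.

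\emph{Step 3: signs.} For $\Ga$, $\Ga^n$ and $P_3^3$ we have $k=c\,d^{-(n-3)/2}$ with $c>0$, which is strictly decreasing, so the difference is positive. For $P_1^3$ and $P_2^3$, $k$ is a negative multiple of a decreasing power of $d$, so the difference is negative. For $G^{(1,3)}$, $k=c[d^{-(n-3)/2}+D^{-(n-3)/2}]$, and the difference is positive termwise.

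The main obstacle is $G^{(2,3)}$, where
\[
k=c\Big[d^{-\frac{n-3}{2}}-\tfrac12 D^{-\frac{n-3}{2}}\Big((n-3)\tfrac dD-(n-1)\Big)\Big],\qquad d\le D.
\]
Here I would show that $\frac{\d}{\d s}k(d+s,D+s)<0$ for $0<d\le D$ by computing $(\partial_d+\partial_D)k$ explicitly. After factoring out $D^{-(n-1)/2}$ and setting $\rho=d/D\in(0,1]$, this should reduce to a one-variable inequality in $\rho$; the expected key inequality is $\rho^{-(n-1)/2}\ge 1+\frac{n-1}{2}(1-\rho)$-type convexity bounds, which I would verify directly. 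Integrating this derivative bound in $s$ from $0$ to $A>0$ gives $\cG^{(2,3)}>0$.
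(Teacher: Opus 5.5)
Your proposal is correct and is essentially the paper's proof. It uses the same split-and-invert bookkeeping via Lemma~\ref{lem:Identity} and the same inversion distance identity: your $A$ is the paper's $\delta^2/\lam^2$, and your uniform reduction $(\lam/|X-X_0|)^{n-3}K(X^\lam,Y)=t^m k(d+A,D+A)$ is what the paper does kernel by kernel. For $G^{(2,3)}$ you use the same monotonicity of $s\mapsto k(d+s,D+s)$ that the paper encodes as $\Phi'(t)<0$.

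The only simplification is that no convexity bound is needed in that last step. Indeed, $(\partial_d+\partial_D)k=-\frac{n-3}{2}c\,(d^{-\frac{n-1}{2}}+D^{-\frac{n-1}{2}})+\frac{(n-1)(n-3)}{4}c\,(d-D)D^{-\frac{n+1}{2}}$ is termwise negative because $d<D$.
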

			\begin{proof}
				We first derive the decomposition formula for $u_{\lam} -u$. For
				$X\in\Si_{X_0,\lam}$, splitting the integrals into the interior and exterior
of $\cB_\lam^+(X_0)$ gives
				\begin{align}
					&\,\int_{\R ^{n+1}_+}
					G^{(i,3)}(X,Y)u_{\lam} ^{p^*}(Y)\,\d Y
					-
					\int_{\R ^{n+1}_+}
					G^{(i,3)}(X,Y)u^{p^*}(Y)\,\d Y
					\nonumber\\
					=&\,
					\int_{\Si_{X_0,\lam}}
					G^{(i,3)}(X,Y)
					[
					u_{\lam} ^{p^*}(Y)-u^{p^*}(Y)
					]\,\d Y
					\nonumber\\
					&\,
					+
					\int_{\cB_\lam^+(X_0)}
					G^{(i,3)}(X,Y)u_{\lam}^{p^*}(Y)\,\d Y
					-
					\int_{\cB_\lam^+(X_0)}
					G^{(i,3)}(X,Y)u^{p^*}(Y)\,\d Y .
					\label{Diffi-Lem-formula-a}
				\end{align}
				By Lemma \ref{lem:Identity}, 
				\be\label{Diffi-Lem-formula-b}
				\int_{\cB_\lam^+(X_0)}
				G^{(i,3)}(X,Y)u_{\lam} ^{p^*}(Y)\,\d Y
				=
				\Big(\frac{\lam}{|X-X_0|}\Big)^{n-3}
				\int_{\Si_{X_0,\lam}}
				G^{(i,3)}(X^\lam,Y)u^{p^*}(Y)\,\d Y,
				\ee and 
				\be \label{Diffi-Lem-formula-c}
				\int_{\cB_\lam^+(X_0)}
				G^{(i,3)}(X,Y)u^{p^*}(Y)\,\d Y
				=
				\Big(\frac{\lam}{|X-X_0|}\Big)^{n-3}
				\int_{\Si_{X_0,\lam}}
				G^{(i,3)}(X^\lam,Y)u_{\lam} ^{p^*}(Y)\,\d Y .
				\ee
				Combining \eqref{Diffi-Lem-formula-a}--\eqref{Diffi-Lem-formula-c}, we obtain
				\begin{align*}
					&\,\int_{\R ^{n+1}_+}
					G^{(i,3)}(X,Y)u_{\lam} ^{p^*}(Y)\,\d Y
					-
					\int_{\R ^{n+1}_+}
					G^{(i,3)}(X,Y)u^{p^*}(Y)\,\d Y\\
					=&\,
					\int_{\Si_{X_0,\lam}}
					\Big[
					G^{(i,3)}(X,Y)
					-
					\Big(\frac{\lam}{|X-X_0|}	\Big)^{n-3}
					G^{(i,3)}(X^\lam,Y)
					\Big]
					[
					u_{\lam} ^{p^*}(Y)-u^{p^*}(Y)
					]\,\d Y .
				\end{align*}
				The same argument applied to the Poisson terms and to the boundary equation
for $v$ yields the remaining difference identities.

				It remains to establish the signs of the kernels. For
$X,Y\in\Si_{X_0,\lam}$, a direct computation gives
				\begin{align}
					&|X^\lam-Y|^2|X-X_0|^2-\lam^2|X-Y|^2\nonumber\\
					=&\,
					\Big|
					\frac{\lam^2(X-X_0)}{|X-X_0|}
					+
					|X-X_0|(X_0-Y)
					\Big|^2
					-
					\lam^2|(X-X_0)+(X_0-Y)|^2\nonumber\\
					=&\,
					(|X-X_0|^2-\lam^2)
					(|Y-X_0|^2-\lam^2)
					:=\delta^2>0 .\label{distance-identity1}
				\end{align}
                Since $X_0\in\pa \R ^{n+1}_+$, the same identity with $X$ replaced
				by its reflection $\bar X=(x,-t)$ gives
\be\label{distance-identity2}
|\bar X^\lam-Y|^2|\bar X-X_0|^2 - \lam^2|\bar X-Y|^2 = (|\bar X-X_0|^2-\lam^2) (|Y-X_0|^2-\lam^2) = \delta^2 .
\ee 
		
  For $i=1$, the explicit formula for $G^{(1,3)}$ in Lemma \ref{lem:Greenfunctions} and the preceding
distance inequalities \eqref{distance-identity1}--\eqref{distance-identity2} immediately imply
\[
G^{(1,3)}(X,Y) > \Big(\frac{\lam}{|X-X_0|} \Big)^{n-3} G^{(1,3)}(X^\lam,Y), \quad X,Y\in\Si_{X_0,\lam}.
\]

				We next consider $i=2$. Set 
				$C_n=2(n-1)(n-3)|\Sn |$.
				Using Lemma \ref{lem:Greenfunctions} and  \eqref{distance-identity1}--\eqref{distance-identity2} again, we compute
				\begin{align*} C_n
					\Big(\frac{\lam}{|X-X_0|}	\Big)^{n-3}
					G^{(2,3)}(X^\lam,Y)
					=&\,
					\lam^{n-3}
					\Big[
					\frac{1}{(\lam^2|X-Y|^2+\delta^2)^{\frac{n-3}{2}}}
					-
					\frac{n-3}{2}
					\frac{\lam^2|X-Y|^2+\delta^2}
					{(\lam^2|\bar X-Y|^2+\delta^2)^{\frac{n-1}{2}}}
					\\
					&\,
					+
					\frac{n-1}{2}
					\frac{1}
					{(\lam^2|\bar X-Y|^2+\delta^2)^{\frac{n-3}{2}}}
					\Big].
				\end{align*}
			
                We claim that
\[
\Big(\frac{\lam}{|X-X_0|} \Big)^{n-3} G^{(2,3)}(X^\lam,Y) < G^{(2,3)}(X,Y).
\]
				Indeed, for $0<a<b$, define
\[
\Phi(t) = (a+t)^{\frac{3-n}{2}} - \frac{n-3}{2}(a+t)(b+t)^{\frac{1-n}{2}} + \frac{n-1}{2}(b+t)^{\frac{3-n}{2}}, \quad t\ge0 .
\]
				Then
\[
\Phi'(t) = -\frac{n-3}{2} [ (a+t)^{\frac{1-n}{2}}+(b+t)^{\frac{1-n}{2}} ] + \frac{(n-3)(n-1)}{4} (a-b)(b+t)^{-\frac{n+1}{2}} <0 .
\]
				Thus $\Phi(t)<\Phi(0)$ for every $t>0$. Applying this with
				$a=\lam^2|X-Y|^2$,
				$b=\lam^2|\bar X-Y|^2$,
				$t=\delta^2$,
				proves the claim. Consequently, for each $i\in\{1,2\}$,
\[
\cG^{(i,3)}(X,Y)>0, \quad \forall\, X,Y\in\Si_{X_0,\lam},\, X\ne Y .
\]

			We next establish the corresponding signs for the Poisson kernels. From
the explicit formulas in Lemma \ref{lem:Poissonformula} and
				$t^\lam=\lam^2t/|X-X_0|^2$, we have, for $i\in\{1,2\}$,
\[
\Big(\frac{\lam}{|X-X_0|} \Big)^{n-3} P_i^3(X^\lam-(y,0)) = -C_i \Big(\frac{\lam}{|X-X_0|} \Big)^{n+3-2i} \frac{t^{3-i}}{|X^\lambda-(y, 0)|^{n+3-2 i}},
\]
				where $C_i=C_i(n)>0$. Similarly,
\[
\Big(\frac{\lam}{|X-X_0|} \Big)^{n-3} P_3^3(X^\lam-(y,0)) = C_3 \Big(\frac{\lam}{|X-X_0|} \Big)^{n-3} \frac{1}{|X^\lam-(y,0)|^{n-3}},
\]
				where $C_3=C_3(n)>0$. For
$X\in\Si_{X_0,\lam}$ and $y\in\Si_{x_0,\lam}$,
\[
|X^\lam-(y,0)|^2|X-X_0|^2 - \lam^2|X-(y,0)|^2 = (|X-X_0|^2-\lam^2) (|y-x_0|^2-\lam^2)>0.
\]
It follows that
\[
P_i(X,y)<0, \quad P_3(X,y)>0 .
\]

				Finally, let $x\in\Si_{x_0,\lam}$ and
$Y=(y,s)\in\Si_{X_0,\lam}$. Then
\[
(|x^\lam-y|^2+s^2)|x-x_0|^2 - \lam^2(|x-y|^2+s^2) = (|x-x_0|^2-\lam^2) (|Y-X_0|^2-\lam^2)>0 .
\]
Since $\Gamma(r)$ is strictly decreasing in $r$ for $n\geq4$, we conclude
that
\[
\Ga((x,0)-Y) > \Big(\frac{\lam}{|x-x_0|} \Big)^{n-3} \Ga((x^\lam,0)-Y),
\]
				and therefore $\cQ(x,Y)>0$. Taking $s=0$ in the same argument gives
\[
\cQ^n(x,y)>0, \quad\forall\, x,y\in\Si_{x_0,\lam},\ x\ne y .
\]
		This completes the proof.
			\end{proof}

			To start the moving sphere method, we define
\[
\Si_{u,\lam} := \{X\in\Si_{X_0,\lam}: u(X)<u_{\lam} (X)\} \quad \text{ and }\quad \Si_{v,\lam} := \{x\in\Si_{x_0,\lam}: v(x)<v_\lam(x)\}.
\]
The following lemma shows that the spheres can be started from sufficiently
small radii.
			\begin{lem}\label{lem:Start}
				Let $(u,v)$ be a positive solution of \eqref{System-equation}. For every
fixed $X_0=(x_0,0)\in\partial\R^{n+1}_+$, there exists
$\lam_0=\lam_0(X_0)>0$ such that, for  all
				$\lam\in (0,\lam_0)$,
\be\label{comparison-inequalities}
u_{\lam }\leq u\quad  \text{ a.e. in }\Si_{X_0,\lam} \quad \text{ and }\quad v_{\lam }\leq v \quad\text{ a.e. in } \Si_{x_0,\lam}.
\ee
			\end{lem}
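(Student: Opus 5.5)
We argue by the standard integral-form start of the moving spheres (as in Dou--Zhu and Tang--Dou), working with the difference identities of Lemma \ref{lem:Difference}. Set $w_\lam:=(u_\lam-u)^+$ on $\Si_{X_0,\lam}$ and $z_\lam:=(v_\lam-v)^+$ on $\Si_{x_0,\lam}$. On $\Si_{u,\lam}$, Lemma \ref{lem:Difference} gives
\[
u_\lam-u\le I_1+I_3,
\]
because $c_i\le0$ and $P_i<0$. So $I_2$ is the integral of a nonpositive kernel times $(v_\lam^{p_i^*}-v^{p_i^*})$. On the set where $v_\lam\ge v$ the integrand has the good sign. On the complement it is positive, and we simply drop it, since the corresponding contribution to $I_2$ is $\le$ the contribution from the set where $v_\lam>v$. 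Precisely, we bound $I_2\le c_i\int_{\Si_{v,\lam}}P_i\,(v_\lam^{p_i^*}-v^{p_i^*})\le 0$ after discarding nonpositive parts.

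Similarly, in $I_1$ and $I_3$ we restrict the integrals to $\Si_{u,\lam}$ and $\Si_{v,\lam}$ respectively, using positivity of $\cG^{(i,3)}$ and $P_3$ together with $c_3\ge0$. Then, using $\cG^{(i,3)}\le G^{(i,3)}<2\Ga$, $P_3(X,y)\le P_3^3(X-(y,0))$, and the mean value theorem, we obtain pointwise
\[
w_\lam(X)\le C\int_{\Si_{u,\lam}}\frac{u_\lam^{p^*-1}w_\lam(Y)}{|X-Y|^{n-3}}\,\d Y+Cc_3\int_{\Si_{v,\lam}}\frac{v_\lam^{p_3^*-1}z_\lam(y)}{|X-(y,0)|^{n-3}}\,\d y .
\]
For $z_\lam$, Lemma \ref{lem:Difference} gives the analogous inequality with kernels $\cQ\le\Ga$ and $\cQ^n\le\Ga^n$.

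Next we apply the Hardy--Littlewood--Sobolev inequality and its half-space trace versions. These are the same mapping properties behind Lemma \ref{lem:Weaknorm}: $\R^{n+1}_+\to\R^{n+1}_+$, $\Rn\to\R^{n+1}_+$, $\R^{n+1}_+\to\Rn$, and $\Rn\to\Rn$. Together with Hölder's inequality they yield
\[
\|w_\lam\|_{L^{\frac{2(n+1)}{n-3}}(\Si_{u,\lam})}+\|z_\lam\|_{L^{\frac{2n}{n-3}}(\Si_{v,\lam})}\le C\,\varepsilon(\lam)\Big(\|w_\lam\|_{L^{\frac{2(n+1)}{n-3}}(\Si_{u,\lam})}+\|z_\lam\|_{L^{\frac{2n}{n-3}}(\Si_{v,\lam})}\Big),
\]
where
\[
\varepsilon(\lam)=\|u_\lam\|^{p^*-1}_{L^{\frac{2(n+1)}{n-3}}(\Si_{u,\lam})}+\|v_\lam\|^{p_3^*-1}_{L^{\frac{2n}{n-3}}(\Si_{v,\lam})}.
\]
Finiteness of these norms, including that of $w_\lam$ near infinity, follows from \eqref{Finitevolume} and the conformal invariance of the critical norms. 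The extra exponents match: $(p^*-1)\frac{n+1}{4}=\frac{2(n+1)}{n-3}$ and $(p_3^*-1)\frac n3=\frac{2n}{n-3}$.

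The main point is to show $\varepsilon(\lam)\to0$ as $\lam\to0$. Since $u_\lam\le u$ on $\Si_{u,\lam}^c$ trivially, we instead use the change of variables $X\mapsto X^\lam$. It gives
\[
\int_{\Si_{u,\lam}}u_\lam^{\frac{2(n+1)}{n-3}}\le\int_{\cB^+_\lam(X_0)}u^{\frac{2(n+1)}{n-3}},
\]
and similarly on the boundary. Because $u$ and $v$ are continuous, hence bounded near $X_0$, the right-hand sides are $O(\lam^{n+1})$ and $O(\lam^n)$, which tend to $0$. Choosing $\lam_0$ so that $C\varepsilon(\lam)<1/2$ for $\lam<\lam_0$ forces $w_\lam=0$ and $z_\lam=0$ a.e., which is \eqref{comparison-inequalities}.

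The delicate step is the sign handling of $I_2$. The rest is routine; there I would first verify carefully that dropping $I_2$ is legitimate on $\Si_{u,\lam}$, since $P_i<0$ and $c_i\le 0$ make $c_iP_i\ge 0$. That means the integrand of $I_2$ is $\le0$ exactly where $v_\lam\le v$, and the part where $v_\lam>v$ must then be absorbed together with the $I_3$ term via $z_\lam$, using the bound $|c_iP_i(X,y)|\le C|X-(y,0)|^{-(n-3)}$ (valid since $t\le |X-(y,0)|$) and the same HLS trace estimate with exponent $p_i^*$.
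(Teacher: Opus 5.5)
Your skeleton is the paper's: restrict to $\Si_{u,\lam}$ and $\Si_{v,\lam}$, use HLS-type bounds plus H\"older, and get smallness from $\Si_{u,\lam}^\lam\subset\cB_\lam^+(X_0)$ and $\Si_{v,\lam}^\lam\subset B_\lam(x_0)$. However, the one nonroutine step, the term $I_2$, is not handled correctly.

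In your main argument you assert $u_\lam-u\le I_1+I_3$ and $I_2\le c_i\int_{\Si_{v,\lam}}P_i\,(v_\lam^{p_i^*}-v^{p_i^*})\,\d y\le 0$. But $c_i\le0$ and $P_i<0$ give $c_iP_i\ge0$. So the integrand of $I_2$ is nonnegative exactly on $\Si_{v,\lam}$. You may discard the part over $\Si_{x_0,\lam}\setminus\Si_{v,\lam}$, but what remains is $\ge 0$, not $\le 0$. Hence your pointwise inequality for $w_\lam$, and your $\varepsilon(\lam)$ with no $v^{p_i^*-1}$ term, are unjustified.

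Your last paragraph notices this. The proposed repair, however, rests on $|c_iP_i(X,y)|\le C|X-(y,0)|^{-(n-3)}$, which is false near the boundary diagonal. At $y=x$ the subtracted Kelvin term stays bounded as $t\to0^+$, while $|P_1^3(X-(y,0))|=\frac{2}{|\Sn|}t^{1-n}$, which is not $O(t^{3-n})$. The inequality $t\le|X-(y,0)|$ only yields $|X-(y,0)|^{-(n-1)}$, respectively $|X-(y,0)|^{-(n-2)}$ for $i=2$. The homogeneity is also wrong for your exponents. A boundary-to-interior kernel of degree $-(n-3)$ maps $L^{\frac{2n}{n+3}}(\Rn)$, not $L^{\frac{2n}{n+2i-3}}(\Rn)$, into $L^{\frac{2(n+1)}{n-3}}(\R^{n+1}_+)$. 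So it does not fit the H\"older split $v_\lam^{p_i^*-1}\in L^{\frac ni}$, $z_\lam\in L^{\frac{2n}{n-3}}$.

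The remedy, which is what the paper does, is to keep the true kernel. The subtracted term $(\lam/|X-X_0|)^{n-3}P_i^3(X^\lam-(y,0))$ is negative, so
\[
0\le c_iP_i(X,y)\le|c_i|\,|P_i^3(X-(y,0))|=C K_i(X,y),
\]
with $K_1=t^2(t^2+|x-y|^2)^{-\frac{n+1}{2}}$ and $K_2=t(t^2+|x-y|^2)^{-\frac{n-1}{2}}$. Gluck's weighted trace inequality, Lemma \ref{lem:Gluck} with $(b,a)=(2,0)$ for $i=1$ and $(b,a)=(1,2)$ for $i=2$, maps $L^{\frac{2n}{n+2i-3}}(\Rn)$ into $L^{\frac{2(n+1)}{n-3}}(\R^{n+1}_+)$. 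H\"older then gives
\[
\|I_2\|_{L^{\frac{2(n+1)}{n-3}}(\Si_{u,\lam})}\le C\|v\|_{L^{\frac{2n}{n-3}}(B_\lam(x_0))}^{\frac{2i}{n-3}}\|z_\lam\|_{L^{\frac{2n}{n-3}}(\Si_{v,\lam})}.
\]
Alternatively, $K_i\le|X-(y,0)|^{-(n-i)}$ has the correct homogeneity for the unweighted half-space HLS inequality of Dou--Zhu. Add $\|v\|^{p_i^*-1}_{L^{\frac{2n}{n-3}}(B_\lam(x_0))}$ to $\varepsilon(\lam)$; it also tends to $0$ as $\lam\to0$. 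Your absorption argument then closes exactly as in the paper.
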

			\begin{proof}
				For $X\in\Si_{u,\lam}$, the decomposition in Lemma
				\ref{lem:Difference} gives
\[
0<u_{\lam} (X)-u(X)=I_1(X)+I_2(X)+I_3(X).
\]
			We estimate the three terms on the right-hand side separately.

				Since $\cG^{(i,3)}(X,Y)>0$, the contribution of
$\Si_{X_0,\lam}\setminus\Si_{u,\lam}$ to $I_1$ is nonpositive. Moreover,
by Lemma \ref{lem:Greenfunctions},
\[
I_1(X) \le \int_{\Si_{u,\lam}} \cG^{(i,3)}(X,Y) [ u_{\lam} ^{p^*}(Y)-u^{p^*}(Y) ]\,\d Y\le C\int_{\Si_{u,\lam}} \frac{u_{\lam} ^{\frac{8}{n-3}}(Y)} {|X-Y|^{n-3}} (u_{\lam} (Y)-u(Y))\,\d Y,
\]
				where we used the inequality
				$a^p-b^p\le p a^{p-1}(a-b)$ for  $a>b>0$, $p>1$.

			Extending the integrand by zero outside $\Si_{u,\lam}$ and applying
Lemma \ref{HLS} in $\R^{n+1}$, we obtain
				\begin{align}
					\|I_1\|_{L^{\frac{2(n+1)}{n-3}}(\Si_{u,\lam})}
					&\le
					C
					\|
					u_{\lam} ^{\frac{8}{n-3}}
					(u_{\lam} -u)
					\|_{L^{\frac{2(n+1)}{n+5}}(\Si_{u,\lam})}
					\nonumber\\
					&\le
					C
					\|
					u_{\lam} ^{\frac{8}{n-3}}
					\|_{L^{\frac{n+1}{4}}(\Si_{u,\lam})}
					\|u_{\lam} -u\|_{L^{\frac{2(n+1)}{n-3}}(\Si_{u,\lam})}
					\nonumber\\
					&=
					C
					\|u\|_{L^{\frac{2(n+1)}{n-3}}(\Si_{u,\lam}^{\lam})}^{\frac{8}{n-3}}
					\|u_{\lam} -u\|_{L^{\frac{2(n+1)}{n-3}}(\Si_{u,\lam})},
					\label{lem:Start formula a}
				\end{align}
				where $\Si_{u,\lam}^{\lam}
				:=
				\cI_{X_0,\lam}(\Si_{u,\lam})
				\subset \cB_\lam^+(X_0)$.

				We next estimate $I_2$. Since $c_i\leq0$ and $P_i(X,y)<0$, the
contribution of
$\Si_{x_0,\lam}\setminus\Si_{v,\lam}$ is again nonpositive. Hence
\[
I_2(X) \le C\int_{\Si_{v,\lam}} K_i(X,y) v_\lam^{\frac{2i}{n-3}}(y) (v_\lam(y)-v(y))\,\d y,
\]
				where
\[
K_1(X,y)=\frac{t^2}{(t^2+|x-y|^2)^{\frac{n+1}{2}}} \quad\text{ and }\quad K_2(X,y)=\frac{t}{(t^2+|x-y|^2)^{\frac{n-1}{2}}}.
\]
				Applying the first Hardy--Littlewood--Sobolev type inequality in Lemma
				\ref{lem:Gluck}, with $(b,a)=(2,0)$ for $i=1$ and
$(b,a)=(1,2)$ for $i=2$, we obtain
				\begin{align}
					\|I_2\|_{L^{\frac{2(n+1)}{n-3}}(\Si_{u,\lam})}
					&\le
					C
					\|
					v_\lam^{\frac{2i}{n-3}}
					(v_\lam-v)
					\|_{L^{\frac{2n}{n+2i-3}}(\Si_{v,\lam})}
					\nonumber\\
					&\le
					C
					\|
					v_\lam^{\frac{2i}{n-3}}
					\|_{L^{\frac{n}{i}}(\Si_{v,\lam})}
					\|v_\lam-v\|_{L^{\frac{2n}{n-3}}(\Si_{v,\lam})}
					\nonumber\\
					&=
					C
					\|v\|_{L^{\frac{2n}{n-3}}(\Si_{v,\lam}^{\lam})}^{\frac{2i}{n-3}}
					\|v_\lam-v\|_{L^{\frac{2n}{n-3}}(\Si_{v,\lam})},
					\label{lem:Start formula b}
				\end{align}
				where $\Si_{v,\lam}^{\lam}
				:=
				\cI_{X_0,\lam}(\Si_{v,\lam})
				\subset B_\lam(x_0)$.

			Similarly, since $c_3\geq0$ and $P_3(X,y)>0$,
            \[
				I_3(X)
				\le
				C\int_{\Si_{v,\lam}}
				\frac{
					v_\lam^{\frac{6}{n-3}}(y)
					(v_\lam(y)-v(y))}
				{(t^2+|x-y|^2)^{\frac{n-3}{2}}}
				\,\d y .
				\]
				Applying again the first inequality in Lemma \ref{lem:Gluck}, now with
$(b,a)=(0,4)$, we have
				\be\label{lem:Start formula c}
				\|I_3\|_{L^{\frac{2(n+1)}{n-3}}(\Si_{u,\lam})}
				\le
				C
				\|
				v_\lam^{\frac{6}{n-3}}
				(v_\lam-v)
				\|_{L^{\frac{2n}{n+3}}(\Si_{v,\lam})}
				\le
				C
				\|v\|_{L^{\frac{2n}{n-3}}(\Si_{v,\lam}^{\lam})}^{\frac{6}{n-3}}
				\|v_\lam-v\|_{L^{\frac{2n}{n-3}}(\Si_{v,\lam})}.
				\ee

			Combining \eqref{lem:Start formula a}--\eqref{lem:Start formula c} and
using the facts
$\Si_{u,\lam}^{\lam}\subset\cB_\lam^+(X_0)$, $\Si_{v,\lam}^{\lam}\subset B_\lam(x_0)$,
we obtain
				\begin{align}
					\|u_{\lam} -u\|_{L^{\frac{2(n+1)}{n-3}}(\Si_{u,\lam})}
					\le&\,
					C_1
					\|u\|_{L^{\frac{2(n+1)}{n-3}}(\cB_\lam^+(X_0))}^{\frac{8}{n-3}}
					\|u_{\lam} -u\|_{L^{\frac{2(n+1)}{n-3}}(\Si_{u,\lam})}
					\nonumber\\
					&\,+
					C_1
					\|v\|_{L^{\frac{2n}{n-3}}(B_\lam(x_0))}^{\frac{2i}{n-3}}
					\|v_\lam-v\|_{L^{\frac{2n}{n-3}}(\Si_{v,\lam})}
					\nonumber\\
					&\,+
					C_1
					\|v\|_{L^{\frac{2n}{n-3}}(B_\lam(x_0))}^{\frac{6}{n-3}}
					\|v_\lam-v\|_{L^{\frac{2n}{n-3}}(\Si_{v,\lam})}.
					\label{lem:Start formula a1}
				\end{align}

				We now derive the corresponding estimate for $v_\lam-v$. For
$x\in\Si_{v,\lam}$, Lemma \ref{lem:Difference} gives
\[
0<v_\lam(x)-v(x)=II_1(x)+II_2(x).
\]
		Since $\cQ(x,Y)>0$, only the contribution from $\Si_{u,\lam}$ needs to
be retained, and hence
\[
				II_1(x)
				\le
				C\int_{\Si_{u,\lam}}
				\frac{
					u_{\lam} ^{\frac{8}{n-3}}(Y)
					(u_{\lam} (Y)-u(Y))}
				{(|x-y|^2+s^2)^{\frac{n-3}{2}}}
				\,\d Y,
				\quad Y=(y,s).
				\]
				The second Hardy--Littlewood--Sobolev type inequality in
Lemma \ref{lem:Gluck}, with $(b,a)=(0,4)$, yields
				\[
					\|II_1\|_{L^{\frac{2n}{n-3}}(\Si_{v,\lam})}
					\le
					C
					\|
					u_{\lam} ^{\frac{8}{n-3}}
					(u_{\lam} -u)
					\|_{L^{\frac{2(n+1)}{n+5}}(\Si_{u,\lam})}
					\le
					C
					\|u\|_{L^{\frac{2(n+1)}{n-3}}(\Si_{u,\lam}^{\lam})}^{\frac{8}{n-3}}
					\|u_{\lam} -u\|_{L^{\frac{2(n+1)}{n-3}}(\Si_{u,\lam})}.
			\]
		Similarly, extending the integrand by zero and applying
Lemma \ref{HLS} in $\Rn$, we obtain
\[
\|II_2\|_{L^{\frac{2n}{n-3}}(\Si_{v,\lam})} \le C \| v_\lam^{\frac{6}{n-3}} (v_\lam-v) \|_{L^{\frac{2n}{n+3}}(\Si_{v,\lam})} \le C \|v\|_{L^{\frac{2n}{n-3}}(\Si_{v,\lam}^{\lam})}^{\frac{6}{n-3}} \|v_\lam-v\|_{L^{\frac{2n}{n-3}}(\Si_{v,\lam})}.
\]
			Consequently,
				\begin{align}
					\|v_\lam-v\|_{L^{\frac{2n}{n-3}}(\Si_{v,\lam})}
					\le&\,
					C_2
					\|u\|_{L^{\frac{2(n+1)}{n-3}}(\cB_\lam^+(X_0))}^{\frac{8}{n-3}}
					\|u_{\lam} -u\|_{L^{\frac{2(n+1)}{n-3}}(\Si_{u,\lam})}
					\nonumber\\
					&+
					C_2
					\|v\|_{L^{\frac{2n}{n-3}}(B_\lam(x_0))}^{\frac{6}{n-3}}
					\|v_\lam-v\|_{L^{\frac{2n}{n-3}}(\Si_{v,\lam})}.
					\label{lem:Start formula a2}
				\end{align}

				Since
				$u\in L^{\frac{2(n+1)}{n-3}}(\R ^{n+1}_+)$ and
				$v\in L^{\frac{2n}{n-3}}(\Rn)$,
				we may choose $\lam_0>0$ sufficiently small such that, for every
				$\lam\in (0,\lam_0)$,
\[
C_* \|u\|_{L^{\frac{2(n+1)}{n-3}}(\cB_\lam^+(X_0))}^{\frac{8}{n-3}} <\frac13, \quad C_* \|v\|_{L^{\frac{2n}{n-3}}(B_\lam(x_0))}^{\frac{2i}{n-3}} <\frac13,\quad \text{ and }\quad C_* \|v\|_{L^{\frac{2n}{n-3}}(B_\lam(x_0))}^{\frac{6}{n-3}} <\frac13,
\]
				where $C_*=\max\{C_1,C_2\}$. Then it follows from
				\eqref{lem:Start formula a1} and \eqref{lem:Start formula a2} that
\[
\|u_{\lam} -u\|_{L^{\frac{2(n+1)}{n-3}}(\Si_{u,\lam})} \le \|v_\lam-v\|_{L^{\frac{2n}{n-3}}(\Si_{v,\lam})}~ \text{ and }~ \|v_\lam-v\|_{L^{\frac{2n}{n-3}}(\Si_{v,\lam})} \le \frac12 \|u_{\lam} -u\|_{L^{\frac{2(n+1)}{n-3}}(\Si_{u,\lam})}.
\]
				Hence both norms vanish. Therefore
				$|\Si_{u,\lam}|=|\Si_{v,\lam}|=0$,
			which proves the desired inequalities \eqref{comparison-inequalities}.
			\end{proof}
We now define the maximal radius for which the comparison inequalities
\eqref{comparison-inequalities} hold.
For
$X_0=(x_0,0)\in\partial\R^{n+1}_+$, set	\be\label{lambdaX0}
			\bar{\lam }(X_0)=\sup\{\mu>0: u_{\lam} \le u  \text{ a.e. in } \Si_{X_0,\lam}
			\ \text{and}\
			v_\lam\le v  \text{ a.e. in } \Si_{x_0,\lam},
			\ \forall\,\lam\in (0,\mu)\}.
			\ee
The next lemma characterizes the limiting position when
$\bar\lam(X_0)<\infty$.
			\begin{lem}\label{lem:Limit}
				Fix $X_0=(x_0,0)\in\pa \R ^{n+1}_+$. If
				$\bar\lam:=\bar\lam(X_0)<+\infty$, then
\[
u_{\bar{\lam }}(X)= u(X), \quad\forall\, X\in\Si_{X_0,\bar\lam},\quad \text{ and }\quad v_{\bar{\lam }}(x)= v(x), \quad\forall\, x\in\Si_{x_0,\bar\lam}.
\]
			\end{lem}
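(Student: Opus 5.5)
We argue by contradiction, following the standard integral moving-spheres scheme (Dou--Zhu, Li--Zhu). By continuity of $u,v$ and of the Kelvin transforms in $\lam$, the comparison inequalities \eqref{comparison-inequalities} persist at $\lam=\bar\lam$: $u_{\bar\lam}\le u$ in $\Si_{X_0,\bar\lam}$ and $v_{\bar\lam}\le v$ in $\Si_{x_0,\bar\lam}$. The proof then has two steps. First we show that if equality fails somewhere, it fails everywhere strictly. Second, we show that strict inequality lets the sphere move beyond $\bar\lam$, contradicting the definition \eqref{lambdaX0}.

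For the first step, insert the inequalities at $\bar\lam$ into the decomposition of Lemma \ref{lem:Difference}. Each term $I_1,I_2,I_3,II_1,II_2$ is nonpositive, because of the kernel signs there together with $c_i\le0$ and $c_3\ge0$. Suppose $u_{\bar\lam}\not\equiv u$ on $\Si_{X_0,\bar\lam}$, or $v_{\bar\lam}\not\equiv v$ on $\Si_{x_0,\bar\lam}$. By continuity the difference is strictly negative on a set of positive measure. In the first case $I_1(X)<0$ and $II_1(x)<0$ for every point, since $\cG^{(i,3)}>0$ and $\cQ>0$ pointwise. In the second case $I_3<0$ and $II_2<0$ everywhere when $c_3>0$. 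If $c_3=0$, the boundary equation forces $v_{\bar\lam}-v=II_1$, so a strict inequality for $v$ already gives one for $u$ on a set of positive measure, and we are back in the first case. In all cases $u_{\bar\lam}<u$ in $\Si_{X_0,\bar\lam}$ and $v_{\bar\lam}<v$ in $\Si_{x_0,\bar\lam}$. We also need a quantitative boundary estimate. Using the explicit kernels, compute the normal derivatives of $\cG^{(i,3)}$ and $\cQ$ across the sphere $|X-X_0|=\bar\lam$; the factor $\delta^2=(|X-X_0|^2-\lam^2)(|Y-X_0|^2-\lam^2)$ makes them vanish linearly there. This yields $\liminf (u-u_{\bar\lam})(X)/(|X-X_0|-\bar\lam)>0$ uniformly on compact subsets of the closed sphere, and likewise for $v$.

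For the second step, fix small $\var>0$ and large $R$. For $\lam\in[\bar\lam,\bar\lam+\var)$ the sets $\Si_{u,\lam}$ and $\Si_{v,\lam}$ can then only lie in a thin shell $\{\lam<|X-X_0|<\bar\lam+\delta\}$ or near infinity. On compact pieces this follows from the strict inequality, the boundary derivative bound, and uniform continuity in $\lam$. Near infinity we use decay: the representation gives $\liminf_{|X|\to\infty}|X|^{n-3}u(X)>0$, since $G^{(i,3)}$ and the positive $P_3^3$ term dominate and the negative $P_i^3$ term decays faster. The exact same sets of estimates as in Lemma \ref{lem:Start} then apply:
\begin{align*}
\|u_\lam-u\|_{L^{\frac{2(n+1)}{n-3}}(\Si_{u,\lam})}&\le C\Big(\|u\|^{\frac{8}{n-3}}_{L^{\frac{2(n+1)}{n-3}}(\Si_{u,\lam}^\lam)}+\cdots\Big)(\text{norms of differences}),
\end{align*}
where the coefficients involve the norms of $u$ and $v$ over the reflected sets $\Si^\lam_{u,\lam}$ and $\Si^\lam_{v,\lam}$. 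These reflected sets have small measure, so by absolute continuity of the integrals the coefficients are below $1/3$. Both difference norms therefore vanish, and the comparison holds for $\lam\in(\bar\lam,\bar\lam+\var)$, contradicting the definition of $\bar\lam$. Hence $u_{\bar\lam}\equiv u$ and $v_{\bar\lam}\equiv v$.

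The main obstacle is controlling the region near infinity and the thin shell near the sphere uniformly in $\lam$. Here the strong comparison must be made quantitative, and this requires the Hopf-type lower bound on the kernel derivatives at $|X-X_0|=\bar\lam$. The Poisson term with $c_i\le0$ has the wrong sign for positivity, but it appears only with a nonpositive contribution in the differences, so it does not spoil the argument. We would first establish the asymptotic $u(X)\ge c|X|^{3-n}$, which is what makes the tail smallness work.
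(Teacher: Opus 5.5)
Your proposal is correct and is essentially the paper's proof. The inequalities persist at $\bar\lam$, and the kernel signs in Lemma~\ref{lem:Difference} upgrade a non-identity to strict inequality everywhere; your separate treatment of $c_3=0$ makes explicit a detail the paper leaves implicit. A uniform positive gap on $\overline{\cB_R^+(X_0)}\cap\Si_{X_0,\bar\lam+\delta}$ then persists for $\lam$ slightly above $\bar\lam$. Finally, the estimates of Lemma~\ref{lem:Start} close because the reflections of the thin shell and of $\R^{n+1}_+\setminus\cB_R^+(X_0)$ have small measure.

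The Hopf-type derivative bound and the tail bound $u\ge c|X|^{3-n}$, which you call the main obstacle, are not needed, and the paper does not use them. You never have to show $\Si_{u,\lam}$ or $\Si_{v,\lam}$ is empty in the shell or near infinity. Only the norms of $u$ and $v$ on their reflected sets must be small, and this is automatic by absolute continuity of the integral. Also, $c_iP_i^3\ge0$, so the $P_i^3$ term in the representation of $u$ is nonnegative, not of the wrong sign.
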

			\begin{proof}
				By the definition of $\bar\lam$ and continuity with respect to $\lam$,
\[
u_{\bar\lam}(X)\le u(X), \quad\forall\, X\in\Si_{X_0,\bar\lam},\quad \text{ and }\quad v_{\bar\lam}(x)\le v(x), \quad\forall\, x\in\Si_{x_0,\bar\lam}.
\]
				Suppose, by contradiction, that at least one of these two inequalities is not an
				identity. Then Lemma \ref{lem:Difference}, together with the sign properties of
				the kernels, implies the strict inequalities
\[
u_{\bar\lam}(X)<u(X), \quad\forall\, X\in\Si_{X_0,\bar\lam},\quad \text{ and }\quad v_{\bar\lam}(x)<v(x), \quad\forall\, x\in\Si_{x_0,\bar\lam}.
\]

				Fix $R\in (\bar\lam+1,+\infty)$ and $\delta\in (0,1)$. By continuity and compactness, there
				exist constants $C_1=C_1(R,\delta)>0$ and $C_2=C_2(R,\delta)>0$ such that
\[
u(X)-u_{\bar\lam}(X)\ge C_1, \quad\forall\, X\in \Si_{X_0,\bar\lam+\delta}\cap \overline{\cB_R^+(X_0)},
\]
				and
\[
v(x)-v_{\bar\lam}(x)\ge C_2, \quad\forall\, x\in \Si_{x_0,\bar\lam+\delta}\cap \overline{B_R(x_0)}.
\]
				By continuity with respect to $\lam$, there exists
				$\varepsilon=\varepsilon(R,\delta)\in (0,\frac{\delta}{2})$ such that, for every
				$\lam\in(\bar\lam,\bar\lam+\varepsilon)$,
\[
u(X)-u_{\lam} (X)\ge \frac{C_1}{2}, \quad\forall\, X\in \Si_{X_0,\bar\lam+\delta}\cap \overline{\cB_R^+(X_0)},
\]
				and
\[
v(x)-v_\lam(x)\ge \frac{C_2}{2}, \quad\forall\, x\in \Si_{x_0,\bar\lam+\delta}\cap \overline{B_R(x_0)}.
\]
				Consequently,
\[
\Si_{u,\lam} \subset (\R ^{n+1}_+\setminus \cB_R^+(X_0)) \cup (\Si_{X_0,\lam}\setminus\Si_{X_0,\bar\lam+\delta}) :=\Omega_{\lam,R},
\]
				and
\[
\Si_{v,\lam} \subset (\Rn\setminus B_R(x_0)) \cup (\Si_{x_0,\lam}\setminus\Si_{x_0,\bar\lam+\delta}) :=\Omega^n_{\lam,R}.
\]
				
                Under the inversion $\cI_{X_0,\lam}$, we have
\[
\Omega_{\lam,R}^{\lam} \subset \cB_{\lam^2/R}^+(X_0) \cup ( \cB_\lam^+(X_0) \setminus \cB_{\lam^2/(\bar\lam+\delta)}^+(X_0) ),
\]
				and
\[
(\Omega^n_{\lam,R})^{\lam} \subset B_{\lam^2/R}(x_0) \cup ( B_\lam(x_0) \setminus B_{\lam^2/(\bar\lam+\delta)}(x_0) ).
\]
				By choosing $R$ sufficiently large, then $\delta>0$ sufficiently small,
and finally decreasing $\varepsilon$ so that
$\varepsilon\in (0,\frac{\delta}{2})$, the measures of
$\Omega_{\lam,R}^{\lam}$ and $(\Omega^n_{\lam,R})^\lam$
can be made arbitrarily small, uniformly for
$\lam\in(\bar\lam,\bar\lam+\varepsilon)$.

			Since
$\Si_{u,\lam}^{\lam}\subset\Omega_{\lam,R}^{\lam}$ and 
$\Si_{v,\lam}^{\lam}\subset(\Omega^n_{\lam,R})^\lam$,
the estimates \eqref{lem:Start formula a1} and
\eqref{lem:Start formula a2}  yield
				\begin{align*}
					\|u_{\lam} -u\|_{L^{\frac{2(n+1)}{n-3}}(\Si_{u,\lam})}
					\le&\,
					C
					\|u\|_{L^{\frac{2(n+1)}{n-3}}(\Omega_{\lam,R}^{\lam})}^{\frac{8}{n-3}}
					\|u_{\lam} -u\|_{L^{\frac{2(n+1)}{n-3}}(\Si_{u,\lam})}\\
					&+
					C
					\|v\|_{L^{\frac{2n}{n-3}}((\Omega^n_{\lam,R})^\lam)}^{\frac{2i}{n-3}}
					\|v_\lam-v\|_{L^{\frac{2n}{n-3}}(\Si_{v,\lam})}\\
					&+
					C
					\|v\|_{L^{\frac{2n}{n-3}}((\Omega^n_{\lam,R})^\lam)}^{\frac{6}{n-3}}
					\|v_\lam-v\|_{L^{\frac{2n}{n-3}}(\Si_{v,\lam})},
				\end{align*}
				and
				\begin{align*}
					\|v_\lam-v\|_{L^{\frac{2n}{n-3}}(\Si_{v,\lam})}
					\le&\,
					C
					\|u\|_{L^{\frac{2(n+1)}{n-3}}(\Omega_{\lam,R}^{\lam})}^{\frac{8}{n-3}}
					\|u_{\lam} -u\|_{L^{\frac{2(n+1)}{n-3}}(\Si_{u,\lam})}\\
					&+
					C
					\|v\|_{L^{\frac{2n}{n-3}}((\Omega^n_{\lam,R})^\lam)}^{\frac{6}{n-3}}
					\|v_\lam-v\|_{L^{\frac{2n}{n-3}}(\Si_{v,\lam})}.
				\end{align*}
				Since
$u\in L^{\frac{2(n+1)}{n-3}}(\R^{n+1}_+)$ and 
$v\in L^{\frac{2n}{n-3}}(\Rn)$,
the absolute continuity of the Lebesgue integral allows us to choose
$R$, $\delta$, and $\varepsilon$ as above so that, uniformly for
$\lam\in(\bar\lam,\bar\lam+\varepsilon)$,
\[
C \|u\|_{L^{\frac{2(n+1)}{n-3}}(\Omega_{\lam,R}^{\lam})}^{\frac{8}{n-3}} <\frac13,\quad C \|v\|_{L^{\frac{2n}{n-3}}((\Omega^n_{\lam,R})^\lam)}^{\frac{2i}{n-3}} <\frac13, \quad \text{ and }\quad C \|v\|_{L^{\frac{2n}{n-3}}((\Omega^n_{\lam,R})^\lam)}^{\frac{6}{n-3}} <\frac13.
\]
				It follows that, for every $\lam\in(\bar\lam,\bar\lam+\varepsilon)$,
\[
\|u_{\lam} -u\|_{L^{\frac{2(n+1)}{n-3}}(\Si_{u,\lam})}=0\quad \text{ and }\quad \|v_\lam-v\|_{L^{\frac{2n}{n-3}}(\Si_{v,\lam})}=0.
\]
				Thus
\[
u_{\lam} \le u \quad\text{ a.e. in }\Si_{X_0,\lam} \quad \text{ and }\quad v_\lam\le v \quad\text{ a.e. in }\Si_{x_0,\lam},
\]
				for all $\lam\in(\bar\lam,\bar\lam+\varepsilon)$, contradicting the
				definition of $\bar\lam$. Therefore the inequalities at
				$\lam=\bar\lam$ must be identities.
			\end{proof}

            We are now ready to prove Theorem \ref{thm:main0}.
            \begin{proof}[Proof of Theorem \ref{thm:main0}]
				We first show that the moving sphere stops at a finite radius for at least
one boundary center. Suppose, to the contrary, that
				\[\bar\lam ((y,0))=+\infty,
				\quad
				\forall\,
				(y,0)\in\pa \R^{n+1}_+.
				\]
			Then, by the definition of $\bar\lam$ in \eqref{lambdaX0},
\[
v_{(y,0),\lam}(x)\le v(x), \quad \forall\,\lam>0, \, y\in\Rn,\, x\in\Rn\setminus \overline{B_\lam(y)} .
\]
			Lemma \ref{lem:Technique1}
therefore implies that $v$ is a positive constant, contradicting
				$\int_{\Rn}v^{\frac{2n}{n-3}}<+\infty$. Hence there exists $X_0\in\pa \R ^{n+1}_+$ such that
				$\bar\lam(X_0)<+\infty$.

				We next claim that
\[
\bar\lam((y,0))<+\infty, \quad \forall\, (y,0)\in\pa \R ^{n+1}_+ .
\]
				Indeed, by the definition of $\bar\lam((y,0))$, for any
				$\lam\in (0,\bar\lam((y,0)))$, we have
\[
u_{(y,0),\lam}(X) = \Big(\frac{\lam}{|X-(y,0)|}\Big)^{n-3} u\Big((y,0)+\frac{\lam^2(X-(y,0))}{|X-(y,0)|^2}\Big) \le u(X)
\]
				for $X\in\Si_{(y,0),\lam}$. Letting $|X|\to+\infty$, we obtain
				\be \label{Removable cond}
				\lam^{n-3}u(y,0)
				\le
				\liminf_{|X|\to+\infty}|X|^{n-3}u(X).
				\ee
				On the other hand, Lemma \ref{lem:Limit} applied at $X_0$ yields $u_{\bar\lam(X_0)}=u$ in $\Si_{X_0,\bar\lam(X_0)}$. Therefore,
				\be \label{Asy_decay_u}
				\bar\lam(X_0)^{n-3}u(X_0)
				=
				\lim_{|X|\to+\infty}|X|^{n-3}u(X).
				\ee
				Combining \eqref{Removable cond} and \eqref{Asy_decay_u}, and then letting
				$\lam\uparrow\bar\lam((y,0))$, gives
\[
\bar\lam((y,0)) \le \bar\lam(X_0) \Big(\frac{u(X_0)}{u(y,0)}\Big)^{\frac1{n-3}} <+\infty .
\]

				Applying Lemma \ref{lem:Technique2} to $v$, we conclude that
\[
v(x)=\Big(\frac{a}{\var^2+|x-x_0|^2}\Big)^{\frac{n-3}{2}}
\]
				for some $a> 0$, $\var>0$, and  $x_0\in\Rn$.

				Following the argument of Li and Zhu \cite{LZ1995}, we now identify the critical
				radius. For every $(y,0)\in\pa \R ^{n+1}_+$, we claim that
				\be\label{Bar-lambda}
				\bar{\lam}(y,0)=\sqrt{\var^2+|y-x_0|^2}.
				\ee
				Indeed, by Lemma \ref{lem:Limit}, $v_{\bar\lam((y,0))}=v$ in $\Si_{y,\bar\lam((y,0))}$. Hence, for
				$x\in\Rn\setminus \overline{B_{\bar\lam((y,0))}(y)}$,
\[
\Big(\frac{\bar{\lam}}{|x-y|}\Big)^{n-3}\Big(\frac{a}{\var^2+|y-x_0+\frac{\bar{\lam}^2(x-y)}{|x-y|^2}|}\Big)^{\frac{n-3}{2}}=\Big(\frac{a}{\var^2+|x-x_0|^2}\Big)^{\frac{n-3}{2}},
\]
				which is equivalent to
				\begin{align*}
					&\,\frac{|x-y|^2}{\bar{\lam}^2}\Big[\var^2+|y-x_0|^2+\frac{\bar{\lam}^4}{|x-y|^2}+\frac{2\bar{\lam}^2(x-y)\cdot(y-x_0)}{|x-y|^2}\Big]\\
					=&\,\var^2+|x-y|^2+|y-x_0|^2+2(x-y)(y-x_0).
				\end{align*}
				Comparing the coefficients of $|x-y|^2$ yields
				\eqref{Bar-lambda}.

				We now choose a conformal map from $\R ^{n+1}_+$ to $\mathbb B^{n+1}$
				so that the transformed solution has constant boundary value. Define
\[
u_{\var,x_0}(x,t) := \var^{\frac{n-3}{2}}u(\var x+x_0,\var t).
\]
				Then
				\be \label{u-u'}
				u_{\var,x_0}(x,0)
				=
				\var^{\frac{n-3}{2}}v(\var x+x_0)
				=
				\Big(
				\frac{a}{\var(1+|x|^2)}
				\Big)^{\frac{n-3}{2}} .
				\ee

				Let $\mathcal S:\R ^{n+1}_+\to\mathbb B^{n+1}$ be the conformal M\"obius transformation defined in \eqref{conformalmapS}, and set
\[
U(\xi)=\Big(\frac{2}{|\xi+e_{n+1}|^2}\Big)^{\frac{n-3}{2}}u_{\var,x_0}\circ \mathcal S^{-1}(\xi).
\]
				By \eqref{u-u'}, $U$ is constant on $\Sn$. Moreover, the critical
boundary norm is invariant under the preceding dilation and conformal
transformation. Therefore
\[
(U|_{\Sn })^{\frac{2n}{n-3}}|\Sn | = \int_{\Sn }U^{\frac{2n}{n-3}}\,\d \sigma = \int_{\Rn}u^{\frac{2n}{n-3}}(x,0)\,\d x = \al ^{\frac{2n}{n-3}}|\Sn |.
\]
				Hence $U|_{\Sn}=\al $.
	By the conformal covariance of the Paneitz operator and the boundary
operators $\sB_i^3$, the function $U$ satisfies the boundary value problem
\eqref{BVP_U-1}.	It remains to show that $U$ is radial.

By Lemma \ref{lem:Limit} and
				\eqref{Bar-lambda}, for any $Y=(y,0)\in\pa \R ^{n+1}_+$, 
\[
\Big( \frac{\sqrt{\var^2+|y-x_0|^2}}{|X-Y|} \Big)^{n-3} u\Big( Y+\frac{(\var^2+|y-x_0|^2)(X-Y)}{|X-Y|^2} \Big) = u(X). \]After the rescaling defining $u_{\var,x_0}$, this becomes \[ \Big( \frac{\sqrt{1+|y|^2}}{|X-(y,0)|} \Big)^{n-3} u_{\var,x_0}\Big( (y,0)+ \frac{(1+|y|^2)(X-(y,0))}{|X-(y,0)|^2} \Big) = u_{\var,x_0}(X).
\]
				Let $X=\mathcal S^{-1}(\xi)$, with $\xi=(\xi',\xi_{n+1})$. A direct computation gives
\[
\mathcal S\Big( (y,0)+ \frac{(1+|y|^2)(X-(y,0))}{|X-(y,0)|^2} \Big) = \xi - 2\frac{y\cdot\xi'+\xi_{n+1}}{1+|y|^2}(y,1).
\]
				Thus $\mathcal S$ conjugates the above Kelvin inversion to the Euclidean reflection
				across the hyperplane
\begin{align}\label{Plane}
y\cdot\xi'+\xi_{n+1}=0, \quad y\in\Rn.
\end{align}
				Furthermore, since $-\mathbf e_{n+1}$ is fixed by this inversion, the standard
				distance identity for inversion gives
\[
\Big| (y,0)+ \frac{(1+|y|^2)(X-(y,0))}{|X-(y,0)|^2} + \mathbf e_{n+1} \Big| = \frac{\sqrt{1+|y|^2}}{|X-(y,0)|} |X+\mathbf e_{n+1}|.
\]
				Consequently,
\[
\Big| \mathcal S \Big( (y,0)+ \frac{(1+|y|^2)(X-(y,0))}{|X-(y,0)|^2} \Big) +\mathbf e_{n+1} \Big|^2 = \frac{|X-(y,0)|^2}{1+|y|^2} |\xi+\mathbf e_{n+1}|^2.
\]
				Combining this identity with the Kelvin equality for $u_{\var,x_0}$, we obtain
\[
U\Big( \xi - 2\frac{y\cdot\xi'+\xi_{n+1}}{1+|y|^2}(y,1) \Big) = U(\xi).
\]
				Therefore $U$ is invariant under reflection across every hyperplane \eqref{Plane}.
				The corresponding normal vectors are $(y,1)$. As $y$ varies over
				$\Rn$, these vectors determine all directions with nonzero last
				component, and the remaining directions are obtained by taking limits. Since
				$U$ is continuous, $U$ is invariant under reflection across every hyperplane
				through the origin. Hence $U$ is radial:
$U(\xi)=U(|\xi|)$.
				The proof is complete.
			\end{proof}

			\begin{rem}\label{rem:Key}
\textbf{A comparison between the second-order and fourth-order ODEs.}
We conclude this section with a brief comparison between the second-order
model and its fourth-order counterpart, which also motivates the analysis
in the next section.

For the second-order equation studied by Li and Zhu \cite{LZ1995}, see
\eqref{LZ1}, the associated radial problem on $\B^{n+1}$ takes the form
\be\label{Sec-ODE}
\left\{
\begin{aligned}
&U''+\frac{n}{r}U'(r)
=-\frac{n^2-1}{4}U^{\frac{n+3}{n-1}}
&&\text{in }\,\B^{n+1},\\
&U'(0)=0,\qquad
\sB^1_1(U)=cU^{\frac{n+1}{n-1}}
&&\text{on }\,\Sn.
\end{aligned}
\right.
\ee
Thus, once the mean curvature $c$ is prescribed, the second-order equation
is supplied with two boundary conditions, and one expects uniqueness.
Moreover, for each admissible $c$, the corresponding solution is an
Aubin--Talenti bubble.

The fourth-order case is substantially different. If one only prescribes
the mean curvature $c_1$, then the radial problem becomes
\be\label{Fou-ODE}
\left\{
\begin{aligned}
&U^{''''}+\frac{2n}{r}U'''
+\frac{n(n-2)}{r^2}U''
-\frac{n(n-2)}{r^3}U'
=\kappa U^{\frac{n+5}{n-3}}
&&\text{ in }\,\B^{n+1},\\
&U'(0)=U'''(0)=0,\quad
\sB^3_1(U)=c_1U^{\frac{n-1}{n-3}}
&&\text{ on }\,\Sn.
\end{aligned}
\right.
\ee
This fourth-order problem imposes only three boundary conditions and hence
is not expected to be uniquely solvable. The missing condition should be
supplied by an admissible prescribed $T$-curvature. Since the pair
$(\sB_1^3,\sB_2^3)$ fails the complementing condition in Lemma
\ref{lem:complementing-condition}, the natural choice is the third-order
$T$-curvature $c_3$.

Moreover, for every $c_1\in\R$, there exists a unique $\lam>0$ such that
the radial Aubin--Talenti bubble $U_{0,\lam}$ satisfies
\eqref{Fou-ODE}. Indeed, the boundary condition is equivalent to
$c_1=\frac{n-3}{4}(\lam-\lam^{-1})$.
The corresponding third-order curvature $c_3$ is then determined by
$\lam$, and hence by $c_1$, through \eqref{eq:Bubble}. Consequently, an
Aubin--Talenti bubble does not in general solve the full boundary value
problem \eqref{BVP_U-1} for an independently prescribed pair $(c_1,c_3)$,
unless the two curvature constants satisfy the corresponding spherical-cap
constraint.

When $c_1=0$, one may prescribe one additional datum, either the boundary
value $\al:=U|_{\Sn}$
or the third-order $T$-curvature $c_3$. This suggests a nontrivial
correspondence between the boundary value $\al$ and the third-order
$T$-curvature constant $c_3$.
\end{rem}

			\section{Construction of radial solutions}\label{sec:4}
			In this section, we construct radial solutions of the boundary value problem
\eqref{BVP_U-1} arising from the conformal reduction in Theorem
\ref{thm:main0}. We work on the unit ball $\B^{n+1}$, where the conformal
boundary operators are given explicitly by \eqref{Boundaryoperators-Ball}.
Our basic comparison functions are the radial Aubin--Talenti bubbles
$U_{0,\lam}$ introduced in \eqref{bubble}.
			
            A direct computation using \eqref{Boundaryoperators-Ball} shows that $U_{0,\lam}$  satisfies
			\be\label{eq:Bubble}
			\left\{\begin{aligned}
				&\Delta^2 U_{0,\lam} =\kap U_{0,\lam} ^{p^*}   &&\text{ in }\, \B^{n+1},\\
				&U_{0,\lam} =\Big(
				\frac{2\lam}{1+\lam^2}\Big)^{\frac{n-3}{2}}
				&&\text{ on }\,  \Sn,\\
				& \sB_1^3(U_{0,\lam} )=\frac{n-3}{2}
				\frac{\lam^2-1}{\lam^2+1}
				U_{0,\lam} (1),&&\text{ on }\,  \Sn,\\ &\sB_2^3(U_{0,\lam} )=\frac{(n-1)(n-3)}{4} \Big[1+\Big(\frac{\lam^2-1}{\lam^2+1}\Big)^2\Big] U_{0,\lam} (1) &&\text{ on }\,  \Sn,\\ &\sB_3^3(U_{0,\lam} )=\frac{(n^2-1)(n-3)}{8} \frac{\lam^2-1}{\lam^2+1}\Big[3-\Big(\frac{\lam^2-1}{\lam^2+1}\Big)^2\Big] U_{0,\lam} (1) &&\text{ on }\,  \Sn.
			\end{aligned}\right.
			\ee
			These identities will be used repeatedly to choose bubbles with prescribed mean curvature and to build sub- and supersolutions.

		We begin with a radial comparison lemma for functions satisfying homogeneous
$\sB_0^3$- and $\sB_1^3$-boundary  data.
			\begin{lem}\label{lem:Comparison}
				Let $Z=Z(r)$ be a $C^4(\overline{\B^{n+1}}) $ radial function  satisfying
\[
\left\{ \begin{aligned} &\Delta^2 Z\geq 0 && \text{in }\quad \B^{n+1},\\ &Z=0,\quad \sB_1^3 Z=0 && \text{on } \quad\Sn . \end{aligned} \right.
\]
				Then
				$
				Z_{rr}(1)\geq 0$,
				$\sB_3^3 Z\leq 0$ on $\Sn
				$.
				Moreover, if $\Delta^2 Z>0$ in $\B^{n+1}$, then one also has
				$
				Z_{rr}(1)>0$,
				$\sB_3^3 Z<0$ on $\Sn$.
			\end{lem}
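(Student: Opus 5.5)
The plan is to reduce everything to the radial ODE for the auxiliary function $W:=\Delta Z$ and to exploit the two homogeneous boundary conditions through simple integral identities. No Green function or maximum principle on the ball is needed.

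First I will simplify the boundary operators. Since $Z(1)=0$, the condition $\sB_1^3 Z=0$ in \eqref{Boundaryoperators-Ball} reduces to $Z'(1)=0$. For a radial function the tangential terms $\Delta_{\Sn}$ vanish, so \eqref{Boundaryoperators-Ball} gives on $\Sn$
\[
\sB_3^3 Z=-(\Delta Z)'(1)-\frac{n-3}{2}Z''(1)+\frac{n-3}{2}Z'(1)+\frac{(n^2-1)(n-3)}{4}Z(1)=-W'(1)-\frac{n-3}{2}Z''(1).
\]
Moreover $\Delta Z=Z''+\frac{n}{r}Z'$, so $W(1)=Z''(1)$. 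It therefore suffices to prove $W(1)\ge0$ and $W'(1)\ge0$, with strict inequalities when $\Delta^2Z>0$. Together these give $Z_{rr}(1)\ge0$ and $\sB_3^3Z\le0$.

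Next I will set $f:=\Delta^2 Z=\Delta W\ge0$. In radial form this reads $(r^nW')'=r^nf$. Regularity at the origin gives $W'(0)=0$, hence
\[
r^nW'(r)=\int_0^r s^nf(s)\,\d s\ge0 .
\]
Thus $W$ is nondecreasing on $[0,1]$ and $W'(1)\ge0$. If $f>0$, then $W'(r)>0$ for all $r\in(0,1]$, so $W$ is strictly increasing and $W'(1)>0$.

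Then I will use $Z'(1)=0$. Since $(r^nZ')'=r^nW$ and $Z'(0)=0$, we get
\[
0=Z'(1)=\int_0^1 s^nW(s)\,\d s .
\]
Suppose $W(1)<0$. Monotonicity gives $W<0$ on $[0,1]$, so the integral is negative, a contradiction. Hence $Z''(1)=W(1)\ge0$. In the strict case, suppose $W(1)\le0$. Strict monotonicity gives $W<0$ on $[0,1)$, so the integral is again negative, a contradiction. Hence $W(1)>0$.

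Combining these steps yields $\sB_3^3Z=-W'(1)-\frac{n-3}{2}W(1)\le0$, with strict inequality when $\Delta^2Z>0$. The only point needing care is justifying $W'(0)=Z'(0)=0$ for a $C^4$ radial function, which is standard. I do not expect a genuine obstacle; the key observation is that the $\sB_1^3$ condition turns into the vanishing of the weighted mean of $W$.
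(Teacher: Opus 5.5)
Your proposal is correct and follows the paper's proof essentially step for step. The paper also sets $\zeta=\Delta Z$, gets monotonicity from $r^n\zeta'(r)=\int_0^r s^n\Delta^2 Z\,\mathrm{d}s\ge 0$, deduces $\zeta(1)\ge 0$ from $Z'(1)=\int_0^1 r^n\zeta\,\mathrm{d}r=0$, and concludes via $Z_{rr}(1)=\zeta(1)$ and $\sB_3^3Z=-\zeta'(1)-\frac{n-3}{2}\zeta(1)$; your explicit treatment of the strict case, where strict monotonicity forces $W'(1)>0$ and $W(1)>0$, spells out a step the paper leaves implicit.
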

			\begin{proof} By the radial formula \eqref{Boundaryoperators-Ball} for the third boundary operator,
				\be\label{B^3_3Z}
				\sB^3_3Z
				=
				-\pa _r(\Delta Z)
				-
				\frac{n-3}{2}Z_{rr}
				+
				\frac{n-3}{2}Z_r
				+
				\frac{(n^2-1)(n-3)}{4}Z
				\quad\text{ on }\,\Sn,
				\ee
				and 	the boundary conditions  become $Z(1)=Z'(1)=0$. It remains to estimate the terms on the right-hand side of \eqref{B^3_3Z}.

				Set
				$
				\zeta(r)=\Delta Z(r)$. The radial Laplacian formula gives that
				$(r^n\zeta'(r))'=r^n\Delta \zeta(r)=r^n\Delta^2Z(r)$ for
				$0<r\le1$.
				Hence,
\[
r^n\zeta'(r)=\int_0^r s^n\Delta^2Z(s)\,\, \d s\ge0,\quad \forall\, 0<r\leq 1.
\]
				Therefore,
				\be\label{zeta-nondecresing}
				\zeta'(r)\ge0
				\quad\text{ for }\, 0<r\le1,
				\ee
				and consequently $\zeta$ is nondecreasing on $(0,1]$.

				Next, applying the radial Laplacian formula to $Z$, we obtain
				$(r^nZ'(r))'=r^n\Delta Z(r)=r^n\zeta(r)$.
				Since $r^nZ'(r)\to0$ as $r\to0$, integration over $(0,1)$ and the boundary condition $Z'(1)=0$ yield
				\be\label{Zprime-zero-average}
				Z'(1)=\int_0^1 r^n\zeta(r)\,\, \d r=0.
				\ee
				Since $\zeta$ is nondecreasing, \eqref{Zprime-zero-average} implies
				\be\label{zeta(1)}
				\zeta(1)\ge0.
				\ee

				Finally, from $\zeta(1)=\Delta Z(1)=Z_{rr}(1)+nZ_r(1)$ and $Z_r(1)=0$, we have $Z_{rr}(1)=\zeta(1)$. Substituting $Z(1)=Z_r(1)=0$ and $Z_{rr}(1)=\zeta(1)$ into \eqref{B^3_3Z}, we obtain
\[
\sB ^3_3Z = -\zeta'(1)-\frac{n-3}{2}\zeta(1)\leq 0 \quad\text{ on }\, \Sn, \] where the last inequality follows from \eqref{zeta-nondecresing} and \eqref{zeta(1)}. This proves the lemma. 
\end{proof}

For the construction below, we shall also use the Green functions on the
unit ball. Following the notation of Chen--Zhang \cite{CZ202406}, we denote
by $\bar G^{(i,j)}$ the Green function on $\B^{n+1}$ associated with the
boundary operator pair $(\sB_i^3,\sB_j^3)$; see also Remark
\ref{rem:Greenfuncions-in-Ball}.
\begin{lem}\label{lem:subsupsolution} 
Let $c_1\in\R$, and assume that $\al$ satisfies
\eqref{alphaAssumption}. Then there exists a positive smooth radial solution $U\in C^\infty(\overline{\B^{n+1}})$ satisfying \[\left\{\begin{aligned} &\Delta^2U=\kap U^{p^*} &&\text{ in }\,\B^{n+1},\\ &U=\al,~ \sB _1^3U=c_1U^{p_1^*}& &\text{ on }\,\Sn. \end{aligned}\right.\] Moreover, we obtain the following estimate \[ \al \Big[ 1+ \frac12 \Big( \frac{n-3}{2} - c_1\al ^{\frac{2}{n-3}} \Big) (1-r^2) \Big] \leq U(r)\leq \frac{\al }{U_{0,\lam}  (1)}\,U_{0,\lam} (r),
\]
				where $\lam$ is determined by \eqref{lamchoice}.  If
				$c_3
				:=
				\frac{\sB_3^3U|_{\Sn}}{\al ^{p_3^*}}$,
				then
				$\sB_3^3U=c_3U^{p_3^*}$
				on $\S^n$,
				and $c_3$ satisfies the estimates in \eqref{c3bdd}.
			\end{lem}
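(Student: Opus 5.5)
The plan is a monotone sub/supersolution scheme for the radial problem with the two boundary conditions $U=\al$ and $\sB_1^3U=c_1U^{p_1^*}$. Since $U=\al$ on $\Sn$, the second condition becomes the linear datum $\sB_1^3U=c_1\al^{p_1^*}$. The iteration will use the Green function $\bar G^{(0,1)}$ of the pair $(\sB_0^3,\sB_1^3)$ on the ball. It is nonnegative, being the conformal image of $G^{(0,1)}\ge 0$ from Lemma \ref{lem:Greenfunctions} (see Remark \ref{rem:Greenfuncions-in-Ball}).

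\emph{Subsolution.} I take the $Q$-flat radial quadratic
\[
\underline U(r)=\al\Big[1+\tfrac12\Big(\tfrac{n-3}{2}-c_1\al^{\frac2{n-3}}\Big)(1-r^2)\Big].
\]
Then $\Delta^2\underline U=0\le\kap\underline U^{p^*}$ and $\underline U(1)=\al$. Moreover $\underline U_r(1)+\frac{n-3}{2}\al=c_1\al^{p_1^*}$, so $\sB_1^3\underline U=c_1\al^{p_1^*}$.

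\emph{Supersolution.} I take $\bar U=\frac{\al}{U_{0,\lam}(1)}U_{0,\lam}$, with $\lam$ fixed (this is \eqref{lamchoice}) by
\[
s:=\frac{\lam^2-1}{\lam^2+1}=\frac{2c_1\al^{\frac2{n-3}}}{n-3}.
\]
By \eqref{eq:Bubble}, this gives $\sB_1^3\bar U=\frac{n-3}{2}s\,\al=c_1\al^{p_1^*}$. It also gives
\[
\Delta^2\bar U=\kap\,\frac{\al}{U_{0,\lam}(1)}\,U_{0,\lam}^{p^*}\ \ge\ \kap\,\bar U^{p^*}
\quad\text{iff}\quad \al\le U_{0,\lam}(1)=(1-s^2)^{\frac{n-3}{4}}.
\]
Raising to the power $\frac4{n-3}$, this condition reads $\al^{\frac4{n-3}}\big(1+\frac{4c_1^2}{(n-3)^2}\big)\le 1$, which is exactly \eqref{alphaAssumption} (with strict inequality). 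In particular $|s|<1$, so $\lam$ is well defined.

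\emph{Ordering and iteration.} The difference $Z=\bar U-\underline U$ satisfies $Z=Z_r=0$ on $\Sn$ and $\Delta^2Z\ge0$. The representation $Z=\int\bar G^{(0,1)}\Delta^2Z$ then gives $Z\ge0$. Next I define the iteration $U_0=\underline U$ and
\[
U_{k+1}=\underline U+\int_{\B^{n+1}}\bar G^{(0,1)}(\cdot,\eta)\,\kap U_k^{p^*}(\eta)\,\d\eta .
\]
Since $\bar G^{(0,1)}\ge0$ and $t\mapsto t^{p^*}$ is increasing, the same positivity shows $\underline U\le U_k\le U_{k+1}\le\bar U$. Each $U_k$ is radial because the Green function commutes with rotations. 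The monotone bounded limit $U$ solves the integral equation. Elliptic regularity (Agmon--Douglis--Nirenberg, using that $(\sB_0^3,\sB_1^3)$ satisfies the complementing condition of Lemma \ref{lem:complementing-condition}) and a bootstrap then give $U\in C^\infty(\overline{\B^{n+1}})$, radial and positive, together with the two-sided estimate stated in the lemma.

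\emph{The constant $c_3$.} With $c_3$ defined as in the statement, the identity $\sB_3^3U=c_3U^{p_3^*}$ holds trivially. For the bounds I apply Lemma \ref{lem:Comparison} twice, each time to a difference with $Z=0$ and $\sB_1^3Z=0$ on $\Sn$:
\begin{itemize}
\item[(i)] to $U-\underline U$, where $\Delta^2(U-\underline U)=\kap U^{p^*}>0$, giving $\sB_3^3U<\sB_3^3\underline U$;
\item[(ii)] to $\bar U-U$, where $\Delta^2(\bar U-U)\ge0$, giving $\sB_3^3\bar U\le\sB_3^3U$.
\end{itemize}
For the quadratic, $\Delta\underline U$ is constant, and the terms $-\frac{n-3}{2}\underline U_{rr}$ and $\frac{n-3}{2}\underline U_r$ cancel at $r=1$. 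Hence $\sB_3^3\underline U=\frac{(n^2-1)(n-3)}4\al$, which after dividing by $\al^{p_3^*}$ is the upper bound in \eqref{c3bdd}.

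The lower bound comes from $\sB_3^3\bar U$, read off from \eqref{eq:Bubble} and expressed through $s$. The factor $U_{0,\lam}(1)^{-1}$ is responsible for the terms in $\al^{8/(n-3)}(1-s^2)^{-2}$ appearing in $L(\al,c_1)$. If the crude bound from (ii) does not reproduce $L$ exactly, I would sharpen (ii) by inserting the intermediate inequality $\kap U^{p^*}\le\kap\bar U^{p^*}$.

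I expect the main obstacle to be this last algebraic verification that the lower bound from (ii) matches $L(\al,c_1)$ in \eqref{L}. The existence part itself reduces cleanly to the supersolution inequality, which is precisely \eqref{alphaAssumption}.
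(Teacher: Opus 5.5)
Your existence argument, the two-sided pointwise bound, and the upper bound in \eqref{c3bdd} are correct and follow the paper: the same $\lam$, the same quadratic subsolution and rescaled bubble supersolution, and the same Boggio monotone iteration. Your step (i) is exactly the paper's upper bound, since $\frac{\al}{U_{0,\lam}(1)}H_\lam=\underline U$. You should also record that $\underline U\ge\al>0$, because $\frac{n-3}{2}-c_1\al^{2/(n-3)}=\frac{n-3}{2}(1-s)>0$.

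The genuine gap is the lower bound on $c_3$. Step (ii) does not produce $L(\al,c_1)$. Set $K:=\frac{(n^2-1)(n-3)}{4}$ and $\beta:=\al/U_{0,\lam}(1)<1$. By \eqref{eq:Bubble}, $\sB_3^3\bar U=\beta\,\sB_3^3U_{0,\lam}=K\al\,\frac{s(3-s^2)}{2}$. The factor $U_{0,\lam}(1)^{-1}$ simply cancels, so no term $\al^{8/(n-3)}(1-s^2)^{-2}$ appears. In fact
\[
L(\al,c_1)-\frac{s(3-s^2)}{2}=\bigl(1-\beta^{p^*-1}\bigr)\frac{(1-s)^2(2+s)}{2}>0,
\]
so your bound is strictly weaker than \eqref{c3bdd}. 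For $c_1=0$ it gives only $c_3\ge0$. The statement instead claims \eqref{c3lowerbdd}, which the paper later needs to get $c_3(\al)\to+\infty$ as $\al\to0^+$.

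Your hint about using $U\le\bar U$ is the right idea, but the comparison function is missing. From $U\le\bar U=\beta U_{0,\lam}$ you get
\[
\Delta^2(\bar U-U)\ge\kap(\beta-\beta^{p^*})U_{0,\lam}^{p^*}=\bigl(1-\beta^{p^*-1}\bigr)\Delta^2(\bar U-\underline U).
\]
Now apply Lemma \ref{lem:Comparison} to $(\bar U-U)-(1-\beta^{p^*-1})(\bar U-\underline U)$, which has zero $\sB_0^3$ and $\sB_1^3$ data. This gives
\[
\sB_3^3U\ge\beta^{p^*-1}\sB_3^3\bar U+\bigl(1-\beta^{p^*-1}\bigr)\sB_3^3\underline U=K\al\Big[1-\beta^{p^*-1}\Big(1-\frac{s(3-s^2)}{2}\Big)\Big].
\]
Since $\beta^{p^*-1}=\al^{8/(n-3)}U_{0,\lam}(1)^{-8/(n-3)}=\al^{8/(n-3)}(1-s^2)^{-2}$, this is exactly $K\al\,L(\al,c_1)$. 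So the $\al^{8/(n-3)}$ comes from the exponent $p^*-1=\frac{8}{n-3}$ of the nonlinearity, not from the normalization of $\bar U$. This is precisely the paper's argument with $W=\bar U-U$ compared against $(\beta-\beta^{p^*})(U_{0,\lam}-H_\lam)$.
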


			\begin{proof}
				We first deduce from  the assumption \eqref{alphaAssumption} that
				\be\label{eq:lambda-admissible}
				1-\Big(\frac{2c_1\al^{\frac{2}{n-3}}}{n-3}\Big)^2>\al^{\frac{4}{n-3}}>0.
				\ee
				In particular,
				$|
				\frac{2c_1\al ^{\frac{2}{n-3}}}{n-3}|<1$.
				Hence there exists $\lam>0$ such that
				\be\label{lamchoice}
				\frac{\lam ^2-1}{\lam ^2+1}
				=
				\frac{2c_1\al^{\frac{2}{n-3}}}{n-3}.
				\ee
				For this choice of $\lam$, it follows from \eqref{eq:Bubble} and \eqref{eq:lambda-admissible} that
\[
U_{0,\lam}  ^{\frac{2}{n-3}}(1) = \frac{2\lam }{1+\lam ^2} = \Big[ 1- \Big( \frac{2c_1\al^{\frac{2}{n-3}}}{n-3} \Big)^2 \Big]^{1/2}> \al^{2/(n-3)}.
\]
				Therefore,  $U_{0,\lam }(1)>\al$. We next construct a supersolution and a subsolution.

				Define
				\be\label{supersolution}
				\overline U:
				=
				\frac{\al }{U_{0,\lam}  (1)}\,U_{0,\lam} .\ee
				Then
				$\overline U=\al$  on $\Sn$.  Moreover,
				by \eqref{eq:Bubble} and \eqref{lamchoice},
\[
\sB _1^3\overline U = \frac{\al }{U_{0,\lam}  (1)} \sB _1^3U_{0,\lam}  = \al \frac{n-3}{2} \frac{\lam ^2-1}{\lam ^2+1} = c_1\al ^{\frac{n-1}{n-3}} = c_1\overline U^{p_1^*} \quad\text{ on }\,\Sn.
\]
				Since $U_{0,\lam}  (1)>\al$ and $p^*>1$, it follows again from \eqref{eq:Bubble} that
\[
\Delta^2\overline U = \frac{\al }{U_{0,\lam}  (1)}\kap U_{0,\lam}  ^{p^*} > \kap \overline U^{p^*} \quad\text{ in }\,\B^{n+1}.
\]
				Thus $\overline U$ is a strict supersolution for the boundary value problem with prescribed $\sB_0^3$- and $\sB_1^3$-boundary data; namely,
				\be\label{Ubar_equation}
				\left\{\begin{aligned}	&\Delta^2\overline U>
					\kap \overline U^{p^*}&&\text{ in }\, \B^{n+1},\\
					&	\overline U=\al ,~
					\sB _1^3\overline U
					=
					c_1\overline U^{p_1^*}
					&&\text{ on }\,\Sn.
				\end{aligned}\right.
				\ee

				Now define
				\be\label{subsolution}
				\underline U(r)
				=
				\al +
				\frac12
				\Big(
				\frac{n-3}{2}\al
				-
				c_1\al ^{p_1^*}
				\Big)
				(1-r^2),\quad 0\leq r\leq 1.
				\ee
				By \eqref{alphaAssumption}, we have  $\underline U>0$ on $\overline{\B^{n+1}}$. Moreover, a direct calculation using \eqref{Boundaryoperators-Ball} gives
\[
\left\{\begin{aligned} & \Delta^2\underline U=0&& \text{ in }\,\B^{n+1},\\ & \underline U=\al ,~ \sB _1^3\underline U = c_1\underline U^{p_1^*} &&\text{ on }\,\,\Sn. \end{aligned}\right.
\]
				We claim that $\underline U< \overline U$  in   $\B^{n+1}$.
				Indeed, set
				$
				x=\frac{\lam ^2+r^2}{\lam ^2+1}
				$,
				then
				$\overline U(r)=\al  x^{-\frac{n-3}{2}}$.
				Since the function $x\mapsto x^{-\frac{n-3}{2}}$ is strictly convex on
				$(0,+\infty)$, we have, for $0\le r<1$,
\[
x^{-\frac{n-3}{2}} \ge 1-\frac{n-3}{2}(x-1).
\]
				Therefore,
\[
\overline U(r) > \al \Big( 1+ \frac{n-3}{2} \frac{1-r^2}{\lam ^2+1} \Big).
\]
				Using \eqref{lamchoice} we obtain
\[
\overline U(r) > \al \Big[ 1+ \frac12 \Big( \frac{n-3}{2} - c_1\al ^{\frac{2}{n-3}} \Big) (1-r^2) \Big] = \underline U(r).
\]
				This proves the claim.

				Let $\bar G^{(0,1)}$ be the Green kernel for the biharmonic operator in
				$\B^{n+1}$ with homogeneous $\sB_0^3$- and $\sB_1^3$-boundary data. For $f\in C^{\infty}(\overline{\B^{n+1}})$, write
				\[ \bar \cG^{(0,1)}(f)(\xi) := \int_{\B^{n+1}} \bar G^{(0,1)}(\xi,\eta)f(\eta)\,\d\eta, \quad \xi\in\B^{n+1}. \] By Boggio's formula (see \cite{B1905} or
				\cite[Lemma 2.27]{GGS2010}), one has
				\be\label{positiveGreen}
				\bar G^{(0,1)}(\xi,\eta)> 0 \quad \text{ for  }\, \xi,\eta\in \B^{n+1}, \ \xi\neq \eta.
				\ee
				Set
				$U_0=\underline U$,
				and define inductively
				\be\label{Uk}
				U_{k+1}
				=
				\underline U+\bar \cG^{(0,1)}(\kap  U_k^{p^*}),\quad k=0,1,2,\ldots.
				\ee
				Since the Boggio Green kernel is invariant under rotations,  $\bar G^{(0,1)}$ maps radial functions to radial functions. Thus, by induction, each $U_k$ is radial.
				Using  \eqref{positiveGreen} and the fact that  $t\mapsto t^{p^{*}}$ is increasing, we obtain
				$U_1> U_0$  in $\B^{n+1}$.
				If $U_k> U_{k-1}$ in $\B^{n+1}$ for some $k\geq 1$, then
\[
U_{k+1}-U_k = \bar \cG^{(0,1)}(\kap (U_k^{p^{*}}-U_{k-1}^{p^{*}})) >0\quad \text{ in }\, \B^{n+1}.
\]
				Thus $\{U_k\}$ is monotone increasing. Moreover, if $U_k\le\overline U$ in $\B^{n+1}$, then
\[
U_{k+1} \le \underline U+\bar \cG^{(0,1)}(\kap \overline U^{p^*}) \le \underline U+\bar \cG^{(0,1)}(\Delta^2\overline U) = \overline U\quad \text{ in }\, \B^{n+1}.
\]
				Consequently,
\[
\underline U\le U_k\le\overline U \quad\text{ in }\,\B^{n+1}, \quad \forall\, k=0,1,2,\ldots
\]
				Therefore, $U_k$ converges pointwise to a radial positive function
				$U_\infty$ satisfying
				\be\label{Ulimit}
				\underline U\le U_\infty\le\overline U
				\quad\text{ in }\,\B^{n+1}.
				\ee
				By the dominated
				convergence theorem, we can pass to the limit in \eqref{Uk} and obtain
				\be\label{Ulimit-expression}
				U_{\infty}=\underline U+\bar \cG^{(0,1)}(\kap  U_{\infty}^{p^*}).
				\ee
				Hence
				\be\label{Ulimt_equation}
				\left\{\begin{aligned}
					&\Delta^2 U_{\infty}=\kap  U_{\infty}^{p^*} &&\text{ in }\,\B^{n+1},\\
					&U_{\infty}=\al ,~
					\sB _1^3 U_{\infty}
					=
					c_1 U_{\infty}^{p_1^*}
					&&\text{ on }\,\Sn.
				\end{aligned}\right.
				\ee
				By \eqref{Ulimit}, $U_\infty$ is bounded and positive. Elliptic regularity and a standard bootstrap argument then give
				$U_{\infty}\in C^\infty(\overline{\B^{n+1}})$.
				It remains to estimate the resulting $\sB_3^3$-curvature.

				Set $W=\overline U-U_{\infty}$.
				Since $U_{\infty}\le\overline U=(\al /U_{0,\lam}  (1))U_{0,\lam}  $ by \eqref{Ulimit}, we obtain from \eqref{Ubar_equation} and \eqref{Ulimt_equation} that
				\be\label{Weq}
				\kap \frac{\al }{U_{0,\lam}  (1)}U_{0,\lam}  ^{p^{*}}	\geq 	\Delta^2W
				=
				\kap \frac{\al }{U_{0,\lam}  (1)}U_{0,\lam}  ^{p^{*}}
				-
				\kap  U_{\infty}^{p^{*}}
				\ge
				\kap
				\Big[
				\frac{\al }{U_{0,\lam}  (1)}
				-
				\Big(\frac{\al }{U_{0,\lam}  (1)}\Big)^{p^{*}}
				\Big]
				U_{0,\lam}  ^{p^{*}}\quad \text{ in }\, \B^{n+1},
				\ee
				and
				\be\label{Wbdy}
				W=0, ~  \sB ^3_1W=0\quad\text{ on }\,\Sn.
				\ee
				Let $H_\lam $ be the $\sB_0^3$- and $\sB_1^3$-homogeneous biharmonic lifting with
				the same boundary data as $U_{0,\lam}  $, namely, \[
				\left\{
				\begin{aligned}
					&\Delta^2H_\lam=0,
					&&\text{ in }\quad\B^{n+1},\\
					&H_\lam=U_{0,\lam} ,
					~
					\sB_1^3H_\lam=\sB_1^3U_{0,\lam} 
					&&\text{ on }\quad\Sn.
				\end{aligned}
				\right.
				\] Equivalently, since the boundary data are constant,
\[
H_\lam(r) = U_{0,\lam} (1) + \frac12 \Big( \frac{n-3}{2}U_{0,\lam} (1)-\sB_1^3U_{0,\lam} |_{\Sn} \Big)(1-r^2). \]Then by \eqref{eq:Bubble}, \[ \left\{ \begin{aligned} &\Delta^2(U_{0,\lam}  -H_\lam )=\kap U_{0,\lam}  ^{p^{*}} &&\text{ in }\,\B^{n+1},\\ &U_{0,\lam}  -H_\lam=0, ~ \sB_1^3(U_{0,\lam}  -H_\lam)=0 &&\text{ on }\,\Sn. \end{aligned} \right.
\]
				It follows from  \eqref{Weq}--\eqref{Wbdy} and 	Lemma \ref{lem:Comparison}   that
\[
\Big[ \frac{\al }{U_{0,\lam}  (1)} - \Big(\frac{\al }{U_{0,\lam}  (1)}\Big)^{p^{*}} \Big] ( \sB ^3_3H_\lam -\sB ^3_3U_{0,\lam}  ) \le -\sB ^3_3W \le \frac{\al }{U_{0,\lam}  (1)} ( \sB ^3_3H_\lam -\sB ^3_3U_{0,\lam}  )\quad \text{ on }\, \Sn.
\]
				Therefore, by \eqref{eq:Bubble},
				\be\label{B33lowerbdd}
				\sB ^3_3U_{\infty}
				\ge
				\frac{(n^2-1)(n-3)}{4}\al
				\Big[
				1-
				\Big(\frac{\al }{U_{0,\lam}  (1)}\Big)^{p^{*}-1}
				\Big(
				1-\frac{\sB ^3_3U_{0,\lam}  }{\frac{(n^2-1)(n-3)}{4} U_{0,\lam}  (1)}
				\Big)
				\Big]\quad \text{ on }\, \Sn,
				\ee
				and
				\be\label{B33upperbdd}
				\sB ^3_3U_{\infty}\le \frac{(n^2-1)(n-3)}{4} \al \quad \text{ on }\, \Sn.
				\ee
				It remains to derive the estimates of $c_3$.

				From \eqref{eq:Bubble} and \eqref{lamchoice}, we obtain
\[
\frac{\sB ^3_3U_{0,\lam} |_{\Sn} }{\frac{(n^2-1)(n-3)}{4} U_{0,\lam}  (1)} = \frac{c_1\al ^{\frac2{n-3}}}{n-3} \Big[ 3-\frac{4c_1^2\al ^{\frac4{n-3}}}{(n-3)^2} \Big]. \]Combining this with \eqref{Ulimt_equation} and \eqref{B33lowerbdd} gives \[ c_3\al^{p_3^*} =\sB _3^3U_{\infty} \ge \frac{(n^2-1)(n-3)}{4} \al L(\al ,c_1)\quad \text{ on }\, \Sn,
\]
				where $L(\al ,c_1)$ is given by \eqref{L}.
				Finally, 	we conclude from   \eqref{B33upperbdd} that
				\[	\frac{(n^2-1)(n-3)}{4} \al ^{-\frac{6}{n-3}}L(\al ,c_1)
				\le
				c_3
				\le
				\frac{(n^2-1)(n-3)}{4} \al ^{-\frac{6}{n-3}}.
				\]
				This completes the proof.
			\end{proof}

We next turn to the case $i=2$. In this setting, positive radial solutions
with small boundary value can be constructed perturbatively from an explicit
biharmonic function satisfying the prescribed $\sB_0^3$- and
$\sB_2^3$-boundary data.
			\begin{lem}
				\label{lem:ball-B0B2-construction-c3-asymptotic}
				Fix $c_2\le0$. Then there exist $\al _0>0$ and a
				$C^1$-family of smooth positive radial functions
				$\{U_\al \}_{0<\al <\al _0}$
				such that
				\be\label{Construction2}
				\left\{
				\begin{aligned}
					&\Delta^2U_\al =\kappa U_\al ^{p^*}
					&&\text{ in }\,\B^{n+1},\\
					&U_\al =\al ,
					~
					\sB^3_2U_\al =c_2\al ^{p^*_2}
					&&\text{ on }\,\Sn.
				\end{aligned}
				\right.
				\ee
				Moreover,  setting $c_3(\al)
:=
\frac{\sB_3^3U_\al|_{\Sn}}{\al^{p_3^*}}$,
one has
\[
c_3(\al ) = \frac{(n^2-1)(n-3)}{4}\al ^{1-p^*_3} + O(\al ^{p^*-p^*_3}) \to+\infty\quad \text{ as } \,\al \to 0^{+}.
\]
				In particular, for every sufficiently large $c_3>0$, there exists
				$\al \in(0,\al _0)$ such that $c_3=c_3(\al )$, and the corresponding
				$U_\al $ solves
\[
\left\{ \begin{aligned} &\Delta^2U_\al =\kappa U_\al ^{p^*} &&\text{ in }\,\B^{n+1},\\ &\sB^3_2U_\al =c_2U_\al ^{p^*_2}, ~ \sB^3_3U_\al =c_3U_\al ^{p^*_3} &&\text{ on }\,\Sn. \end{aligned} \right.
\]
			\end{lem}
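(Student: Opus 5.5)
We rescale by the boundary value and treat the interior nonlinearity as a small perturbation of an explicit biharmonic profile. Write $U_\al=\al W$. Then \eqref{Construction2} becomes
\[
\Delta^2W=\kappa\,\var\,W^{p^*}\ \text{ in }\B^{n+1},\qquad W=1,\quad \sB_2^3W=\delta\ \text{ on }\Sn,
\]
where $\var:=\al^{p^*-1}=\al^{\frac8{n-3}}$ and $\delta:=c_2\al^{p_2^*-1}=c_2\al^{\frac4{n-3}}$. First we find the radial biharmonic lifting of the boundary data. We look for it in the form $h(r)=1+a(1-r^2)$. Using \eqref{Boundaryoperators-Ball}, we have $h_r(1)=h_{rr}(1)=-2a$ and $\Delta_{\Sn}h=0$. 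Hence
\[
\sB_2^3h=-2(n-1)a+\tfrac{(n-3)(n-1)}{2},
\]
so the choice $a=\frac{n-3}{4}-\frac{\delta}{2(n-1)}$ gives $\sB_2^3h=\delta$. This $h$ is positive for $\al$ small (recall $c_2\le0$, so in fact $a\ge\frac{n-3}{4}$), and $h\to H$ from \eqref{Hr} as $\al\to0^+$. The key observation is that $\sB_3^3h$ does not depend on $a$. Indeed, $\Delta h\equiv-2(n+1)a$ is constant, and the terms $-\frac{n-3}{2}h_{rr}+\frac{n-3}{2}h_r$ cancel. Therefore $\sB_3^3h=\frac{(n^2-1)(n-3)}{4}$ exactly.

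Next we write $W=h+w$, where $w$ satisfies homogeneous $(\sB_0^3,\sB_2^3)$ data. Let $\bar G^{(0,2)}$ be the ball Green function from Remark \ref{rem:Greenfuncions-in-Ball}. By Lemma \ref{lem:Greenfunctions} and conformal invariance it is nonnegative, and it preserves radial functions. We solve the fixed-point problem
\[
w=\mathcal F(w,\al):=\var\,\kappa\int_{\B^{n+1}}\bar G^{(0,2)}(\cdot,\eta)\,(h+w)^{p^*}(\eta)\,\d\eta
\]
in the closed unit ball of radial functions in $C^0(\overline{\B^{n+1}})$. Since $h\ge1$, the function $(h+w)^{p^*}$ is smooth in $w$ on $\{\|w\|\le1/2\}$. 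Moreover, $\bar G^{(0,2)}$ is integrable in $\eta$ uniformly in $\xi$. Hence, for $\var$ small, $\mathcal F(\cdot,\al)$ maps this ball to itself and is a contraction with constant $O(\var)$. This yields a unique radial solution $w_\al$ with $\|w_\al\|_{C^0}=O(\var)$. The solution is positive, and so is $U_\al$, because $h+w_\al\ge h-O(\var)>0$; in fact $w_\al\ge0$ because $\bar G^{(0,2)}\ge0$. The map $(w,\al)\mapsto\mathcal F$ is $C^1$, since $\var$, $\delta$ and hence $h$ are smooth in $\al>0$, and $\partial_w\mathcal F=O(\var)$ is invertible after subtracting from the identity. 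The implicit function theorem then gives that $\al\mapsto w_\al$ is a $C^1$ family.

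For smoothness and the boundary estimate we bootstrap. By Lemma \ref{lem:complementing-condition}, the pair $(\sB_0^3,\sB_2^3)$ satisfies the complementing condition, since $0+2\ne3$. The Agmon--Douglis--Nirenberg estimates therefore apply to $\Delta^2w=\var\kappa(h+w)^{p^*}$ with homogeneous data. They give $w_\al\in C^\infty(\overline{\B^{n+1}})$ and $\|w_\al\|_{C^{3,\gamma}}\le C\var$. Consequently $\sB_3^3w_\al=O(\var)$, and
\[
\sB_3^3U_\al=\al\Big(\tfrac{(n^2-1)(n-3)}{4}+O(\al^{\frac8{n-3}})\Big).
\]
Dividing by $\al^{p_3^*}$ gives
\[
c_3(\al)=\tfrac{(n^2-1)(n-3)}{4}\al^{1-p_3^*}+O(\al^{p^*-p_3^*}),
\]
and $c_3(\al)\to+\infty$ because $1-p_3^*<0$.

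Finally, $c_3(\al)$ is continuous on $(0,\al_0)$ and tends to $+\infty$ as $\al\to0^+$. By the intermediate value theorem, every $c_3$ larger than $\sup_{[\al_0/2,\al_0)}c_3$ is attained at some $\al\in(0,\al_0)$. At such an $\al$ we have $\sB_2^3U_\al=c_2\al^{p_2^*}=c_2U_\al^{p_2^*}$ and $\sB_3^3U_\al=c_3U_\al^{p_3^*}$ on $\Sn$. The step I expect to be delicate is the uniform $C^3$ control up to the boundary, which is needed so that the error in $\sB_3^3$ is genuinely $O(\var)$. Concretely, one must check that the Green operator $\bar G^{(0,2)}$ gains four derivatives, with constants independent of $\al$. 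I would first get this by applying the ADN estimate to the radial ODE on $[1/2,1]$, and then use smoothness at the origin for the interior part.
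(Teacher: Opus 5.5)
Your proposal is correct and follows essentially the same route as the paper. Your lifting $\al h$ is exactly the paper's $H_\al=\al+b_\al(r^2-1)$, and the remaining steps match: the exact identity $\sB_3^3H_\al=\frac{(n^2-1)(n-3)}{4}\al$, the correction as a fixed point of $\bar\cG^{(0,2)}(\kappa(H_\al+V)^{p^*})$, and the implicit and intermediate value theorems for $C^1$ dependence and large $c_3$. The only difference is technical: you rescale by $\al$, contract in $C^0$ and bootstrap with ADN estimates (applied also to differences to get continuity of $c_3$), whereas the paper contracts directly in $C^{4,\gamma}_{\rm rad}(\overline{\B^{n+1}})$ on a ball of radius $M\al^{p^*}$, which gives both in one step.
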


			\begin{proof}
				Let $ \bar G^{(0,2)}$	denote the Green kernel for the biharmonic operator in
				$\B^{n+1}$ with homogeneous $\sB_0^3$- and $\sB_2^3$-boundary data.
				For $f\in C(\overline{\B^{n+1}})$, define
				\[ \bar \cG^{(0,2)}(f)(\xi) := \int_{\B^{n+1}} \bar G^{(0,2)}(\xi,\eta)f(\eta)\,\d\eta, \quad \xi\in\B^{n+1}.\]
				Fix any $\gamma\in(0,1)$. 	We shall use the standard Schauder estimate
				\[ \bar \cG^{(0,2)} :C^{0,\gamma}(\overline{\B^{n+1}})
				\to C^{4,\gamma}(\overline{\B^{n+1}})\]
				and the fact that $\bar \cG^{(0,2)} $ preserves radial symmetry. Set
				\[\mathscr X:=C^{4,\gamma}_{\rm rad}(\overline{\B^{n+1}})\quad
				\text{ and }\quad  	\mathscr Y:=C^{0,\gamma}_{\rm rad}(\overline{\B^{n+1}}).\]
				For $0<\al \ll1$, define
\[
H_\al (r)=\al +b_\al (r^2-1), \quad 0\leq r\leq 1,
\]
				where
				$b_\al
				:=
				\frac{c_2}{2(n-1)} \alpha^{p_2^*}-\frac{n-3}{4} \alpha $.
				Then $H_\al =\al $ on $\Sn$. Since $H_\al $ is radial, using \eqref{Boundaryoperators-Ball} gives
				$\sB^3_2H_\al =c_2\al ^{p^*_2}$
				on $\Sn$.
				Since $c_2\le0$, we have $b_\al <0$. Hence
\[
H_\al (r)=\al +b_\al (r^2-1)\ge \al \quad\text{ in }\,\overline{\B^{n+1}}\quad \text{ and }\quad \|H_\al \|_{\mathscr X}= O(\al ) .
\]
				The purpose of introducing $H_\al $ is to match the prescribed
				$\sB_0^3$- and $\sB_2^3$-boundary data in \eqref{Construction2}. Thus it
				remains to solve for a correction term with homogeneous boundary data. We look
				for a solution in the form
				$U_\al =H_\al +V$.
				Then $V$ should satisfy the fixed point equation
\[
V = \bar{\cG}^{(0,2)} (\kappa(H_\al +V)^{p^*}).
\]

				Let $M>0$ be
				chosen later and define
\[
\mathcal A_\al = \{V\in \mathscr X: \|V\|_{\mathscr X}\le M\al ^{p^*}\}.
\]
				For $V\in\mathcal A_\al $, set
\[
\mathcal K_\al (V) = \bar \cG^{(0,2)} (\kappa(H_\al +V)^{p^*}).
\]
				If $\al >0$ is sufficiently small, then
\[
|V|\le M\al ^{p^*}\le \frac{\al }{4}\quad \text{ and }\quad H_\al +V\ge \frac{3\al }{4}>0.
\]
				Moreover,  $\|H_\al +V\|_{\mathscr X}\le C\al $. Since
				$s\mapsto s^{p^*}$ is smooth on $(0,+\infty)$, the standard composition
				estimate in H\"older spaces gives
\[
\|\kappa(H_\al +V)^{p^*}\|_{\mathscr Y}\le C\kappa\al ^{p^*}.
\]
				Consequently,
				$\|\mathcal K_\al (V)\|_{\mathscr X}\le C\kappa\al ^{p^*}$.
				Choosing $M$ sufficiently large depending only on $n,\gamma,\kappa$, and
				then choosing $\al _0>0$ sufficiently small, we obtain
				$\mathcal K_\al (\mathcal A_\al )\subset\mathcal A_\al $ for all $\al \in (0,\al _0)$.
				If $V_1,V_2\in\mathcal A_\al $, then the mean value theorem and the
				H\"older composition estimate give
\[
\|\kappa(H_\al +V_1)^{p^*} -\kappa(H_\al +V_2)^{p^*}\|_{\mathscr Y} \le C\kappa\al ^{p^*-1}\|V_1-V_2\|_{\mathscr X}.
\]
				Therefore,
				\be\label{Lipschitzestimate}
				\|\mathcal K_\al (V_1)-\mathcal K_\al (V_2)\|_{\mathscr X}
				\le
				C\kappa\al ^{p^*-1}\|V_1-V_2\|_{\mathscr X}.
				\ee
				After decreasing $\al _0$ if necessary, $\mathcal K_\al $ is a
				contraction on $\mathcal A_\al $. Hence there exists a unique
				$V_\al \in\mathcal A_\al $ such that
\[
V_\al = \bar \cG^{(0,2)} (\kappa(H_\al +V_\al )^{p^*}).
\]

				Define
				$	U_\al :=H_\al +V_\al $.
				Then
				$\Delta^2U_\al =\kappa U_\al ^{p^*}$ in $\B^{n+1}$.
				Since $\bar \cG^{(0,2)} $ has zero $\sB^3_0$- and $\sB^3_2$-boundary data, we have
\[
U_\al =\al , ~ \sB^3_2U_\al =c_2\al ^{p^*_2} \quad\text{ on }\,\Sn.
\]
				Moreover,
\[
U_\al \ge \frac{3\al }{4}>0 \quad\text{ in }\,\overline{\B^{n+1}}.
\]
				The construction is radial, since $H_\al $ is radial and
				$\bar{\cG}^{(0,2)}$ preserves radial symmetry.
				It remains to estimate $\sB^3_3U_\al $.

				Since $U_\al $ is radial,
				$\sB^3_3U_\al $ is constant on $\Sn$. From the fixed point construction,
				$\|V_\al \|_{\mathscr X}\le C\kappa\al ^{p^*}$.
				Therefore,
\[
\sB^3_3V_\al =O(\al ^{p^*}) \quad\text{ on }\,\Sn.
\]
				A direct computation using \eqref{Boundaryoperators-Ball} gives
\[
\sB^3_3H_\al =\frac{(n^2-1)(n-3)}{4}H_\al (1)=\frac{(n^2-1)(n-3)}{4}\al \quad\text{ on }\,\Sn.
\]
				It follows that
\[
C_3(\al ):=\sB^3_3U_\al |_{\Sn} = \sB^3_3H_\al |_{\Sn}+\sB^3_3V_\al |_{\Sn} = \frac{(n^2-1)(n-3)}{4}\al +O(\al ^{p^*}).
\]
				Consequently,
\[
c_3(\al ) := \frac{C_3(\al )}{\al ^{p^*_3}} = \frac{(n^2-1)(n-3)}{4}\al ^{1-p^*_3} + O(\al ^{p^*-p^*_3}).
\]
				Since
				$1-p^*_3=-\frac6{n-3}$ and
				$p^*-p^*_3=\frac2{n-3}>0$,
				we have
				$c_3(\al )\to+\infty$ as $\al \to 0^{+}$.

				The fixed point depends continuously, indeed $C^1$, on $\al $. This
				follows either from the contraction mapping theorem with parameters or from
				the implicit function theorem applied to
\[
\mathcal F(\al ,V) = V-\bar \cG^{(0,2)}(\kappa(H_\al +V)^{p^*}).
\]
				At $V=V_\al $, we have
\[
D_V\mathcal F(\al ,V_\al )[\varphi] = \varphi - \bar \cG^{(0,2)} (\kappa p^*U_\al ^{p^*-1}\varphi),
\]
				where 	$D_V\mathcal F$ denotes the Fr\'echet derivative of $\mathcal F$ with
				respect to the second variable $V$. The preceding Lipschitz estimate in \eqref{Lipschitzestimate} also implies that the linear operator
				$\varphi
				\mapsto
				\bar{\cG}^{(0,2)}(\kappa p^*U_\al ^{p^*-1}\varphi)$
				has $\mathscr X\to\mathscr X$ norm $O(\al ^{p^*-1})$. Hence, after possibly decreasing $\al _0$, $D_V\mathcal F(\al ,V_\al )$ is invertible for all $\al \in (0,\al _0)$. Therefore $\al \mapsto V_\al $, and consequently $\al \mapsto c_3(\al )$, is continuous.

				Fix $\al _1\in(0,\al _0)$. Since $c_3(\al )\to+\infty$ as
				$\al \to 0^{+}$,  for every sufficiently large $c_3>0$
				there exists $\al \in(0,\al _1]$ such that $c_3=c_3(\al )$. For this choice of $\al $,
\[
\sB^3_3U_\al = C_3(\al ) = c_3\al ^{p^*_3} = c_3U_\al ^{p^*_3} \quad\text{ on }\,\Sn.
\]
				Together with the prescribed $\sB_2^3$-boundary condition, this completes the proof.
			\end{proof}

			\begin{proof}[Proof of Theorem \ref{thm:Existence}]
				Theorem \ref{thm:Existence} follows from Lemmas \ref{lem:subsupsolution} and \ref{lem:ball-B0B2-construction-c3-asymptotic}.
			\end{proof}

			\section{The minimal-boundary case: radial ODE analysis and parametrization}\label{sec:5}

			We now turn to the minimal-boundary case $c_1=0$ and complete the proof of
Theorem \ref{thm:Main}.  By the moving-sphere argument in Section \ref{sec:3}, the boundary trace has already been classified, and the problem is reduced, after conformal normalization, to a radial boundary value problem on the unit ball.
The purpose of this section is twofold. First, we analyze the associated radial fourth-order ODE and prove the uniqueness and nonexistence statements needed for the classification. Second, we study the dependence of the induced $\mathscr B_3^3$-curvature coefficient on the boundary height $\al $, thereby obtaining the parametrization of solutions by the constant $c_3$.

			\subsection{Reduction to an autonomous fourth-order ODE}
			We first pass from the radial boundary value problem on the unit ball to an
			autonomous ordinary differential equation on the  half-line. Throughout this
			subsection, let $\al>0$ and let
$U=U(r)$, $0\leq r\leq1$, be a smooth positive radial solution of
			\be\label{c1=0}
			\left\{
			\begin{aligned}
				&\Delta^2 U=\kap U^{p^*} &&\text{ in }\, \B^{n+1},\\
				&U=\al,~\sB_1^3U=0 &&\text{ on }\, \Sn.
			\end{aligned}
			\right.
			\ee

			Since $U$ is radial, the boundary quantity $\mathscr B_3^3U$ is constant on
			$\mathbb S^n$. We therefore define the associated third boundary curvature
			coefficient by
			\begin{equation}\label{c3_alpha}
				c_3(\al )
				:=
				\al ^{-p_3^*}\mathscr B_3^3U|_{\mathbb S^n},
				\qquad
				p_3^*=\frac{n+3}{n-3}.
			\end{equation}
			Equivalently,
			\[
				\mathscr B_3^3U=c_3(\al )U^{p_3^*}
				\quad\text{ on }\,\mathbb S^n.
	\]
			In particular, the standard bubble $U_*$ corresponds to $\al =1$, and
			\eqref{eq:Bubble} gives $c_3(1)=0$. At this stage, $c_3(\al)$ denotes the coefficient associated with the
chosen radial solution; the uniqueness result below will show that, in the
relevant range, it is uniquely determined by $\al$.

			We now introduce the Emden--Fowler variable
\[
r=e^{-t},\quad t\in[0,+\infty),
\]
			and set
			\begin{equation*}
				v(t)=e^{-at}U(e^{-t}),
				\quad
				a:=\frac{n-3}{2}.
			\end{equation*}
			Equivalently,
\[
U(r)=r^{-a}v(-\log r), \quad r\in(0,1].
\]
			Since $U$ is smooth at the origin, differentiation of
$v(t)=e^{-at}U(e^{-t})$ gives
\[
v^{(j)}(t)=O(e^{-at})\qquad\text{ as }\,  t\to+\infty
\] for  $j=0,1,\ldots,4$.
In particular,
\be\label{vdecay}
v(t)\to0
\qquad\text{ as }\,t\to+\infty.
\ee

			Set
			$b:=\frac{n+1}{2}$.
			A direct computation using the radial Laplacian in $\R^{n+1}$ gives
\[
\Delta^2 U(r) = r^{-a-4} [ v^{(4)}(t) - (a^2+b^2)v''(t) + a^2b^2v(t) ], \quad t=-\log r.
\]
			Since $a+4=ap^*$, the interior equation in \eqref{c1=0} is transformed into
the autonomous fourth-order equation
			\begin{equation}\label{fourorderODE2}
				v^{(4)}-(a^2+b^2)v''+a^2b^2v
				=
				\kappa v^{p^*},
				\quad t>0.
			\end{equation}

			We next rewrite the boundary conditions in terms of $v$. Since
$U(1)=\al$, we have
\[
v(0)=\al .
\]
			Moreover, differentiating $U(r)=r^{-a}v(-\log r)$ gives
\[
U_r(1)=-a v(0)-v'(0).
\]
			Since
\[
\mathscr B_1^3U=U_r+\frac{n-3}{2}U = U_r+aU \quad\text{on }\mathbb S^n,
\]
			the condition $\mathscr B_1^3U=0$ is equivalent to
\[
v'(0)=0.
\]
			Thus
			\begin{equation}\label{fourorderODEbdy}
				v(0)=\al ,
				\quad
				v'(0)=0.
			\end{equation}

			Similarly, using the formula for $\sB_3^3$ in
\eqref{Boundaryoperators-Ball}, expressed in the Emden--Fowler variable,
we obtain
\[
\mathscr B_3^3U = v'''(0)-\frac{3n^2-6n+7}{4}v'(0)=v'''(0) \quad\text{ on }\,\mathbb S^n
\]
	in view of \eqref{fourorderODEbdy}.	Consequently, by the definition of $c_3(\al )$ in \eqref{c3_alpha},
\[
v'''(0)=c_3(\al )\al ^{p_3^*}.
\]
			In particular,
			\begin{equation}\label{v'''0>0}
				c_3(\al )>0
				\quad\Longleftrightarrow\quad
				v'''(0)>0.
			\end{equation}

	The autonomous equation \eqref{fourorderODE2} on the whole line was studied in detail by Frank and K\"onig \cite{FK2019}. Our setting is different, however, since we consider a half-line problem with boundary data prescribed at $t=0$ and decay at $+\infty$ inherited from the regularity of $U$ at the origin. The conserved Hamiltonian associated with \eqref{fourorderODE2} provides the first restriction on the admissible initial data.

			Indeed, multiplying \eqref{fourorderODE2} by $v'$ yields
\[
\frac{\d}{\d t}\mathcal H(v)=0,
\]
where
\begin{equation}\label{Hamiltonian}
\mathcal H(v)
=
v'v'''
-\frac12(v'')^2
-\frac{a^2+b^2}{2}(v')^2
+\frac{a^2b^2}{2}v^2
-\frac{\kap}{p^*+1}v^{p^*+1}.
\end{equation}
Thus $\mathcal H(v)$ is constant along every solution of
\eqref{fourorderODE2}. Since $v$ and its derivatives decay as
$t\to+\infty$, we obtain
\[
\mathcal H(v)\equiv0.
\]
			Evaluating  \eqref{Hamiltonian} at $t=0$, and using \eqref{fourorderODEbdy},
			we find
			\be\label{v''(0)2}
			\frac12(v''(0))^2
			=
			\frac{a^2b^2}{2}\al^2
			-\frac{\kap}{p^*+1}\al^{p^*+1}=\frac{a^2\al^2}{2}[b^2-(b^2-1)\al^{p^*-1}].\ee
		Define $h(\al)\geq0$ by
			\be\label{d2alpha}
			h^2(\al)
			=
			a^2\al^2[b^2-(b^2-1)\al^{p^*-1}].
			\ee
			Hence $	h^2(\al)\geq 0$ if and only if
			\be\label{alpha_upperbound}
			0<\al\le \al_*=
			\Big(
			\frac{(n+1)^2}{(n-1)(n+3)}
			\Big)^{\frac{n-3}{8}}.
			\ee
			In particular, $h(1)=a>0$ and 	$v''(0)=\pm h(\al)$ for $\al$ satisfying \eqref{alpha_upperbound}. Note also that $\al_*>1$ for $n\geq 4$.






			To compare solutions corresponding to different admissible shooting data,
we shall use the following fourth-order comparison principle. The argument
is adapted from the proof of \cite[Theorem B.1]{CZ202406}.

			\begin{lem}\label{lem:Use}
				Let $v$ be a positive solution of  \eqref{fourorderODE2} and let $u$ be a  positive subsolution of  \eqref{fourorderODE2}, i.e.,
				\[
					u^{(4)}-(a^2+b^2)u''+a^2b^2u
					\leq
					\kappa u^{p^*},
					\qquad t>0.
				\]
		Assume that
			\[
					v(0)=u(0),\quad v'(0)=u'(0), \quad v''(0)\geq u''(0),\quad v'''(0)>u'''(0).
			\]
				Then  
\[
\lim_{t\to+\infty}(v(t)-u(t))=+\infty.
\]
			\end{lem}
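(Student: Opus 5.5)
The plan is to factor the linear operator and run a continuation argument on the difference $w:=v-u$. Since
\[
v^{(4)}-(a^2+b^2)v''+a^2b^2v=\Big(\frac{\d^2}{\d t^2}-a^2\Big)\Big(\frac{\d^2}{\d t^2}-b^2\Big)v,
\]
and the two factors commute, subtracting the subsolution inequality for $u$ from the equation for $v$ gives
\[
\Big(\frac{\d^2}{\d t^2}-b^2\Big)z\ \ge\ \kappa\big(v^{p^*}-u^{p^*}\big),\qquad z:=w''-a^2w .
\]
The hypotheses translate into $w(0)=w'(0)=0$, $w''(0)\ge0$ and $w'''(0)>0$. Consequently $z(0)=w''(0)\ge0$ and $z'(0)=w'''(0)>0$.

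\textbf{Step 1: start of the continuation.} By Taylor expansion,
\[
w(t)=\tfrac12w''(0)t^2+\tfrac16w'''(0)t^3+o(t^3),
\]
so $w>0$ on some interval $(0,\varepsilon)$. Let $T:=\sup\{s>0:\ w>0 \text{ on } (0,s)\}\in(0,+\infty]$. On $(0,T)$ we have $v>u>0$, and since $s\mapsto s^{p^*}$ is increasing on $(0,\infty)$, it follows that $z''-b^2z>0$ there.

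\textbf{Step 2: positivity and monotonicity of $z$ on $(0,T)$.} Since $z(0)\ge0$ and $z'(0)>0$, $z$ is positive on a right neighbourhood of $0$. While $z>0$ we get $z''>b^2z>0$, so $z'$ is increasing and stays $\ge z'(0)>0$. Hence $z$ keeps increasing and cannot return to zero before $T$. I will phrase this as an elementary first-exit argument: at a first zero of $z$ in $(0,T)$ one would need $z'\le0$, which contradicts $z'\ge z'(0)>0$ up to that point. Thus $z>0$ and $z'>0$ on $(0,T)$.

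\textbf{Step 3: $T=+\infty$.} On $(0,T)$ we now have $w''=a^2w+z>0$. Together with $w'(0)=0$ this gives $w'>0$ on $(0,T)$, so $w$ is strictly increasing there. If $T<\infty$, continuity would force $w(T)=0$, while monotonicity gives $w(T)>w(T/2)>0$, a contradiction. Hence $T=+\infty$, and Steps 1–2 hold on all of $(0,\infty)$.

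\textbf{Step 4: divergence.} Fix $t_0>0$. For $t\ge t_0$,
\[
w''(t)\ \ge\ z(t)\ \ge\ z(t_0)=:\delta>0 ,
\]
because $a^2w\ge0$ and $z$ is increasing. Integrating twice gives $w(t)\ge \frac{\delta}{2}(t-t_0)^2+w'(t_0)(t-t_0)+w(t_0)\to+\infty$, so $v-u\to+\infty$. (In fact $w''\ge a^2w$ even yields exponential growth like $e^{at}$.)

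The only delicate point is the bootstrap in Steps 1–3. The sign of the nonlinear term $v^{p^*}-u^{p^*}$ is available only while $w>0$, and positivity of $w$ is in turn derived from positivity of $z$. The sup-argument on $T$ closes this circle, and the strict inequality $v'''(0)>u'''(0)$ is exactly what makes both $w$ and $z$ leave zero with a definite sign at $t=0$.
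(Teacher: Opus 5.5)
Your proof is correct and follows essentially the paper's route: factor the operator as $(\partial_t^2-a^2)(\partial_t^2-b^2)$, run a first-zero continuation argument on $v-u$ that propagates positivity through the intermediate second-order quantity, and then deduce divergence at infinity. The differences are cosmetic. The paper factors in the opposite order ($k=\phi''-b^2\phi$ with $k''-a^2k>0$) and uses explicit Duhamel formulas with $\sinh$/$\cosh$ kernels, which also give the quantitative bound $k(t)\ge \frac{k'(0)}{a}\sinh(at)$; you use elementary first-exit and monotonicity arguments instead.
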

			\begin{proof}
				Set $\phi(t)=v(t)-u(t)$. The assumptions on the initial data imply that
$\phi(t)>0$ for all sufficiently small $t>0$. We first show that $\phi(t)>0$ for all $t>0$. Suppose otherwise, and let
\[
T:=\sup\{\tau>0:\phi(t)>0\text{ for all }t\in(0,\tau)\}.
\]
				Then $T\in (0,+\infty)$, and by continuity
				of $\phi$, we have
				$\phi>0$ on $(0,T)$ and $\phi(T)=0$.  Since
$v>u$ on $(0,T)$, we have
\[
\phi^{(4)}-(a^2+b^2)\phi''+a^2b^2\phi \geq \kap (v^{p^*}-u^{p^*})>0 \quad\text{ on }\,(0,T).
\]
				
                Define $k(t):=\phi''(t)-b^2\phi(t)$.
				Then $k''-a^2k>0$ on $(0,T)$. Since $\phi(0)=\phi'(0)= 0$, we have
				\be\label{kinitial}
				k(0)=\phi''(0)\ge0,
				\quad
				k'(0)=\phi'''(0)>0.
				\ee
				By the Duhamel formula for $k''-a^2k$, we derive
				\be\label{integralk(t)}
				k(t)
				=
				k(0)\cosh(at)
				+
				\frac{k'(0)}{a}\sinh(at)
				+
				\int_0^t
				\frac{\sinh(a(t-s))}{a}
				(k''(s)-a^2k(s))\,\d s>0
				\ee
				for every $t\in(0,T)$, where $\sinh t=\frac{e^{t}-e^{-t}}{2}$.
				Applying the Duhamel formula to
$\phi''-b^2\phi=k$ with $\phi(0)=\phi'(0)=0$, we obtain
				\be\label{integralphi(t)}
				\phi(t)
				=
				\int_0^t
				\frac{\sinh(b(t-s))}{b}k(s)\,\d s
				>0
				\ee
				for every $t\in (0,T]$. In particular, $\phi(T)>0$,	contradicting $\phi(T)=0$.  Hence no such
				$T$ exists, and therefore $\phi(t)>0$  for all $t>0$.

				It remains to determine the behavior as $t\to+\infty$. Since
$v>u$ on $(0,+\infty)$,
\[
k''-a^2k = \phi^{(4)}-(a^2+b^2)\phi''+a^2b^2\phi \geq \kap(v^{p^*}-u^{p^*})>0 \quad\text{ on }\,(0,+\infty).
\]
			It follows from \eqref{kinitial} and \eqref{integralk(t)} that
\[
k(t)\ge \frac{k'(0)}{a}\sinh(at)>0 \quad\text{ on }\,(0,+\infty).
\]
				Substituting this estimate into \eqref{integralphi(t)}, we obtain
\[
\phi(t) = \int_0^t \frac{\sinh(b(t-s))}{b}k(s)\,\d s\geq \frac{k'(0)}{ab} \int_0^t \sinh(b(t-s))\sinh(as)\,\d s \to+\infty \quad\text{ as }\,t\to+\infty,
\]
				which completes the  proof.
			\end{proof}

            We now exclude the case $\al \ge1$ under the positivity assumption
			$c_3(\al )>0$.  By \eqref{v'''0>0}, this is equivalent to
$v'''(0)>0$.

			\begin{lem}\label{lem:no-positive-c3-C-geq-one}
				Let $\al\ge 1$. Then there is no positive smooth solution $v$ of \eqref{fourorderODE2}
				satisfying  \eqref{vdecay}, \eqref{fourorderODEbdy}, and \eqref{v'''0>0}.
			\end{lem}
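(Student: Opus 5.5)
The plan is to contradict the decay \eqref{vdecay} by comparing $v$, via Lemma \ref{lem:Use}, with a rescaled copy of the standard bubble written in the Emden--Fowler variable.

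\emph{The comparison profile.} Write the standard bubble \eqref{standardbubble} in the Emden--Fowler variable:
\[
w(t):=e^{-at}U_*(e^{-t})=\Big(\frac{2}{e^{t}+e^{-t}}\Big)^{a}=(\cosh t)^{-a}.
\]
Since $U_*$ solves $\Delta^2U_*=\kap U_*^{p^*}$, the computation leading to \eqref{fourorderODE2} shows that $w$ is a positive solution of \eqref{fourorderODE2}. Since $w$ is even,
\[
w(0)=1,\quad w'(0)=0,\quad w''(0)=-a,\quad w'''(0)=0,
\]
and $w(t)\to0$ as $t\to+\infty$.

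\emph{The subsolution.} Suppose, for contradiction, that $\al\ge1$ and that $v$ is a positive smooth solution with \eqref{vdecay}, \eqref{fourorderODEbdy} and \eqref{v'''0>0}. Set $u:=\al w$. Using $\al\ge1$ and $p^*>1$, we get $\al\le\al^{p^*}$, and hence
\[
u^{(4)}-(a^2+b^2)u''+a^2b^2u=\al\kap w^{p^*}\le\kap\al^{p^*}w^{p^*}=\kap u^{p^*}.
\]
So $u$ is a positive subsolution of \eqref{fourorderODE2}.

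\emph{Checking the hypotheses of Lemma \ref{lem:Use}.}
\begin{itemize}
\item By \eqref{fourorderODEbdy}, $v(0)=\al=u(0)$ and $v'(0)=0=u'(0)$.
\item By \eqref{v'''0>0}, $v'''(0)>0=u'''(0)$.
\item For the second derivatives, the Hamiltonian identity \eqref{v''(0)2} requires $h^2(\al)\ge0$. If $\al>\al_*$ this already fails, so no solution exists. Otherwise $v''(0)=\pm h(\al)$, and by \eqref{d2alpha} with $\al\ge1$,
\[
h^2(\al)=a^2\al^2\big[b^2-(b^2-1)\al^{p^*-1}\big]\le a^2\al^2\big[b^2-(b^2-1)\big]=a^2\al^2.
\]
Hence $v''(0)\ge -h(\al)\ge -a\al=u''(0)$.
\end{itemize}

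\emph{Conclusion.} Lemma \ref{lem:Use} now gives $v(t)-u(t)\to+\infty$ as $t\to+\infty$. But $v\to0$ by \eqref{vdecay}, and $u=\al w\to0$ as well, so $v-u\to0$. This is a contradiction.

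The only delicate point is the inequality $v''(0)\ge u''(0)$. It relies on the Hamiltonian constraint \eqref{v''(0)2}: it holds for either sign of $v''(0)$ precisely because $\al\ge1$ forces $h(\al)\le a\al$. This is also where the hypothesis $\al\ge1$ is used, together with the subsolution property.
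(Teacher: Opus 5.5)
Your proposal is correct and matches the paper's proof: both compare $v$ with the subsolution $\alpha(\cosh t)^{-a}$ via Lemma~\ref{lem:Use}, using the Hamiltonian bound $|v''(0)|=h(\alpha)\le a\alpha$ for $\alpha\ge1$ (with $\alpha>\alpha_*$ excluded directly). The argument then reaches the same contradiction with the decay \eqref{vdecay}.
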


			\begin{proof}
			By \eqref{alpha_upperbound}, no such solution can exist when
$\al>\al_*$. It therefore remains to consider
				$\al\in [1, \al_*]$.

				Suppose, by contradiction, that such a solution $v(t)$ exists.
				Since $\al\ge1$, \eqref{d2alpha} gives
				$|v''(0)|\le a\al$,
				and hence
				$v''(0)+a\al\ge0$.

				Let
				$w(t)=(\cosh t)^{-a}$, where $\cosh t=\frac{e^t+e^{-t}}{2}$.
				Then $w$ is the Emden-Fowler profile of the standard bubble and satisfies
\[
w^{(4)}-(a^2+b^2)w''+a^2b^2w=\kap w^{p^*},
\]
				with
\[
w(0)=1,\quad w'(0)=0,\quad w''(0)=-a,\quad w'''(0)=0.
\]

 We compare $v$ with $\al w$.               Set
				$\phi(t):=v(t)-\al w(t)$.
				Then
\[
\phi(0)=0,\quad \phi'(0)=0,
\]
				and
\[
\phi''(0)=v''(0)+a\al \ge0, \quad \phi'''(0)=v'''(0)>0.
\]
	Moreover, since $\al\geq1$ and $p^*>1$,
\[
(\al w)^{(4)}-(a^2+b^2)\al w''+a^2b^2\al w=\kap \al w^{p^{*}} \leq \kap (\al w)^{p^*} \quad\text{ on }\,(0,+\infty).
\]
			Thus $\al w$ is a positive subsolution of \eqref{fourorderODE2}.  Lemma \ref{lem:Use} therefore yields
				$v(t)-\al w(t)\to+\infty$  as $t\to +\infty$. On the other hand, by \eqref{vdecay} and the exponential decay of $w$,
$v(t)-\al w(t)\to 0$ as $t\to +\infty$,
a contradiction.
			\end{proof}

			It remains to analyze the case $\al \in(0,1)$. In this range, the Hamiltonian identity yields exactly two possible shooting branches:
			\be\label{twobranchs}
			v''(0)=\pm h(\al),
			\ee
			where $h(\al)>0$ for all $\al \in(0,1)$.
			We next prove that, for each fixed branch in \eqref{twobranchs}, there is at most one positive decaying solution with
$v'''(0)>0$.
			\begin{lem}\label{lem:same-sign-uniqueness}	Let $\al\in (0,1)$. For each choice of sign in \eqref{twobranchs},
				there exists at most one positive regular radial solution $v$ of
				\eqref{fourorderODE2} satisfying  \eqref{vdecay}, \eqref{fourorderODEbdy}, and \eqref{v'''0>0}.
			\end{lem}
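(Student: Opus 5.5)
Suppose $v_1,v_2$ are two such solutions on the same branch. Then they share $v_j(0)=\al$, $v_j'(0)=0$ and $v_j''(0)=\pm h(\al)$, so they can differ only in $v'''(0)$. Both values are positive by \eqref{v'''0>0}. The plan is to argue by contradiction: assume $v_1'''(0)\neq v_2'''(0)$, and after relabeling take $v_1'''(0)>v_2'''(0)>0$.

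Lemma \ref{lem:Use} then applies with $v=v_1$ and $u=v_2$. Indeed, $v_2$ is a positive exact solution of \eqref{fourorderODE2}, hence in particular a positive subsolution. The initial data match as required: $v_1(0)=v_2(0)$, $v_1'(0)=v_2'(0)$, $v_1''(0)=v_2''(0)$ (so the non-strict inequality holds), and $v_1'''(0)>v_2'''(0)$. The lemma gives $v_1(t)-v_2(t)\to+\infty$ as $t\to+\infty$. On the other hand, both functions satisfy \eqref{vdecay}, so $v_1-v_2\to0$, which is a contradiction. Hence $v_1'''(0)=v_2'''(0)$.

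With all four initial values of $v$ and its first three derivatives equal, standard uniqueness for the ODE initial value problem gives $v_1\equiv v_2$. The nonlinearity $v\mapsto\kappa v^{p^*}$ is locally Lipschitz on $(0,\infty)$, and both solutions are positive, so uniqueness applies on the whole half-line.

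The only point needing care is the positivity hypotheses in Lemma \ref{lem:Use}. These are checked directly: both $v_j$ are assumed positive, and the contradiction argument inside that lemma uses only $v>u$ on the relevant interval, which the lemma itself establishes. I do not expect a genuine obstacle here. The real difficulty, namely comparing the two opposite branches $v''(0)=+h(\al)$ and $v''(0)=-h(\al)$, where Lemma \ref{lem:Use} does not apply directly, is outside the scope of this statement.
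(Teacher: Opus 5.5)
Your proposal is correct and follows the paper's proof essentially step for step. Two solutions on the same branch share $v(0)$, $v'(0)$ and $v''(0)$. Applying Lemma \ref{lem:Use} with $v=v_1$ and $u=v_2$, under the assumption $v_1'''(0)>v_2'''(0)$, gives $v_1-v_2\to+\infty$, which contradicts \eqref{vdecay}; exchanging the roles rules out the reverse inequality. Once all four initial data agree, uniqueness for the initial value problem gives $v_1\equiv v_2$.
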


			\begin{proof}
			Let $v_1$ and $v_2$ be two such solutions belonging to the same
Hamiltonian branch. Then
\[
v_1(0)=v_2(0)=\al,\quad v_1'(0)=v_2'(0)=0,\quad v_1''(0)=v_2''(0).
\]
				Thus the only initial datum that may differ is the third derivative at
$t=0$. 

 Suppose, by contradiction, that
				$v_1'''(0)>v_2'''(0)$. Applying Lemma \ref{lem:Use} with $v=v_1$ and $u=v_2$, we obtain
				$v_1(t)-v_2(t)\to +\infty$ as $t\to+\infty$, 
				which contradicts \eqref{vdecay}.
				Therefore $v_1'''(0)>v_2'''(0)$ is impossible. Interchanging $v_1$ and $v_2$ rules
out the opposite inequality.  Hence
				$v_1'''(0)=v_2'''(0)$.
				Therefore $v_1$ and $v_2$ have the same four initial data at $t=0$. By
				the uniqueness theorem for the initial value problem of the fourth-order ODE \eqref{fourorderODE2},
				we conclude that
				$v_1\equiv v_2$.
				The proof is complete.
			\end{proof}

			It remains to determine which of the two branches in
\eqref{twobranchs} can give rise to a positive decaying solution when
$c_3(\al)>0$. The next lemma shows that the positive branch
$v''(0)>0$ is impossible.
			\begin{lem}\label{lem:no-decay-positive-second-third}
				Let $\al\in  (0,1)$. There is no positive smooth solution $v$ of \eqref{fourorderODE2}
				satisfying  \eqref{vdecay}, \eqref{fourorderODEbdy}, \eqref{v'''0>0}, and $v''(0)>0$.
			\end{lem}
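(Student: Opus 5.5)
The plan is to argue by contradiction. Suppose $v$ is a positive smooth solution of \eqref{fourorderODE2} satisfying \eqref{vdecay}, \eqref{fourorderODEbdy}, \eqref{v'''0>0} and $v''(0)>0$, i.e.\ $v''(0)=h(\al)$. I will use the factorisation $(D^2-a^2)(D^2-b^2)$ of the linear part together with comparison against explicit sub/supersolutions, in the spirit of Lemma \ref{lem:Use}. Since $v(0)=\al$, $v'(0)=0$, $v''(0)>0$ and $v'''(0)>0$, $v$ is strictly increasing near $t=0$. Because $v(t)\to0$ as $t\to+\infty$, there is a first critical point $T>0$ with $v'(T)=0$ and $v''(T)\le0$, and $\beta:=v(T)=\max_{[0,T]}v>\al$.

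\emph{First step (shift).} Since \eqref{fourorderODE2} is autonomous, $\tilde v(t):=v(t+T)$ is again a positive decaying solution, now with $\tilde v(0)=\beta$, $\tilde v'(0)=0$. The Hamiltonian identity $\mathcal H\equiv0$ evaluated at $t=T$ gives $(\tilde v''(0))^2=h^2(\beta)$. As $\tilde v''(0)\le0$, this means $\tilde v''(0)=-h(\beta)$, and in particular $\beta\le\al_*$. Thus $\tilde v$ lies on the negative Hamiltonian branch at the higher level $\beta$.

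\emph{Second step (case analysis on $\beta$ and on $\tilde v'''(0)=v'''(T)$).}

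If $\beta\ge1$ and $v'''(T)>0$, the argument of Lemma \ref{lem:no-positive-c3-C-geq-one} applies verbatim to $\tilde v$, because that proof uses only the ODE, the decay and the initial data. Comparing $\tilde v$ with the subsolution $\beta w$, $w=(\cosh t)^{-a}$, via Lemma \ref{lem:Use} gives $\tilde v-\beta w\to+\infty$, which is a contradiction.

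In the remaining cases I plan to prove a ``reversed'' version of Lemma \ref{lem:Use}. It states: if $u$ is a positive supersolution of \eqref{fourorderODE2} and $v$ is a solution with equal value and first derivative at $0$, $v''(0)\le u''(0)$ and $v'''(0)<u'''(0)$, then $u-v\to+\infty$, or $v$ must become negative. The proof is the same Duhamel argument applied to $\phi=u-v$, with $k=\phi''-b^2\phi$ and $k''-a^2k\ge\kap(u^{p^*}-v^{p^*})$.
\begin{itemize}
\item If $\beta\le1$, then $h(\beta)\ge a\beta$, so $\tilde v''(0)\le\beta w''(0)$. Moreover $\beta w$ is a supersolution, since $\beta w^{p^*}\ge(\beta w)^{p^*}$ for $\beta\le1$.
\item If $v'''(T)\le0$, I instead go back to $t=0$. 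There I compare $v$ with the $Q$-flat profile $v_H(t)=\al[(1+\tfrac a2)e^{-at}-\tfrac a2e^{-(a+2)t}]$, which is the Emden--Fowler form of $\al H$. It is a positive subsolution with $v_H(0)=\al$, $v_H'(0)=0$, $v_H''(0)=-\al a(a+2)<v''(0)$ and $v_H'''(0)=2\al a(a+1)(a+2)$.
\end{itemize}
The idea is to play the two factor functions $k=v''-b^2v$ and $m=v''-a^2v$ against each other. These satisfy $k''-a^2k>0$ and $m''-b^2m>0$, both tend to $0$ at infinity, and $k'(0)=m'(0)=v'''(0)>0$. By the maximum principle on $(0,\infty)$ neither can have a nonnegative interior maximum, so both stay negative after their first local maximum. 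I intend to show that this is incompatible with $v'$ changing sign at $T$ after $v''(0)>0$.

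\emph{Main obstacle.} The delicate step is the sub-case where $v'''(0)\le v_H'''(0)$ and simultaneously $v'''(T)\le0$. There neither comparison function directly has strictly ordered third derivatives, so Lemma \ref{lem:Use} cannot be applied as stated. My first attempt will be the sign argument for $m$ on $[0,T]$. From $m'(0)>0$, $m<0$ and the differential inequality for $m$, I want to show that $v''-a^2v$ increases until $v''$ can no longer vanish with $v'(T)=0$. Concretely, I aim to integrate $m''-b^2m>0$ against $\sinh(b(T-s))$ and derive $v'(T)>0$, which would give the contradiction. If this fails, I will try the Hamiltonian with the weight $e^{\pm at}$, i.e.\ a Pohozaev-type identity on $[0,\infty)$ in the original ball variables. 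Such an identity should relate the signs of $v''(0)$ and $v'''(0)$ directly and exclude $v''(0)>0$, $v'''(0)>0$ when $\al<1$.
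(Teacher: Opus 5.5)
\textbf{There is a genuine gap.} The proposal does not yet prove the lemma: the decisive sub-case is left open, and the case analysis has a further hole.

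First, the hole. Your reversed comparison with $\beta w$ needs $\tilde v'''(0)=v'''(T)<0$, since $\beta w'''(0)=0$; you only checked the second-derivative ordering. The comparison with $v_H$ through Lemma \ref{lem:Use} needs $v'''(0)\ge v_H'''(0)$. So the configuration $\beta<1$, $v'''(T)>0$, $v'''(0)<v_H'''(0)$ is covered by none of your steps, and your ``main obstacle'' lists only $v'''(T)\le 0$. This case cannot be closed using the data at $t=T$. Negative-branch data $(\beta,0,-h(\beta),\text{positive})$ at a level $\beta<1$ are realized by actual positive decaying solutions, namely the profiles of the $U_\beta$ from Lemma \ref{lem:subsupsolution}. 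Any contradiction must therefore come from $[0,T]$.

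Second, for the sub-case you flag, you describe only intended attempts. Integrating $m''-b^2m=\kappa v^{p^*}$ is unlikely to give $v'(T)>0$. Indeed $m(0)=a\alpha(\delta-a)$, with $\delta:=h(\alpha)/(a\alpha)\in(1,b)$, can be negative. Also, nothing in that plan uses the hypothesis $v''(0)=+h(\alpha)$ at the point where it matters.

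\textbf{Missing idea: differentiate the equation.} Set $y=v'$ and $z=y''-a^2y$; in your notation $z=m'$. Then \eqref{fourorderODE2} gives the linear equation $z''-b^2z=\kappa p^*v^{p^*-1}y$, whose coefficient is positive. The initial data are $y(0)=0$, $y'(0)=h(\alpha)>0$ and $z(0)=v'''(0)>0$. Evaluate \eqref{fourorderODE2} at $t=0$ and use $h(\alpha)=a\alpha\delta$, $\alpha^{p^*-1}=(b^2-\delta^2)/(b^2-1)$ and $\kappa=ab(b^2-1)$. This gives $z'(0)=v^{(4)}(0)-a^2v''(0)=ab\alpha\,[\,b(b-a)+\delta(b-\delta)\,]>0$, which is exactly where the positive branch enters.

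A two-layer Duhamel argument, as in Lemma \ref{lem:Use}, then runs as follows. Positivity of $y$ on an interval forces $z>0$ there. Positivity of $z$ in turn forces $y\ge h(\alpha)a^{-1}\sinh(at)>0$. Hence $y>0$ on all of $(0,\infty)$, so your first critical point $T$ never exists. Moreover $v(t)\ge \alpha+h(\alpha)a^{-2}(\cosh(at)-1)\to+\infty$, contradicting \eqref{vdecay}. No shift, no case split, and no comparison with $w$ or $v_H$ is needed.
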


			\begin{proof}
				Suppose, by contradiction, that such a solution $v$ exists.
				Since $v''(0)>0$, \eqref{v''(0)2} and \eqref{d2alpha} give
\[
v''(0)=h(\al) = a\al \sqrt{b^2-(b^2-1)\al^{p^*-1}}:=a\al \delta,
\]
				where
				$\delta=\sqrt{b^2-(b^2-1)\al^{p^*-1}}\in (1,b)$ in view of $\al\in (0,1)$.

				Set
\[
y(t):=v'(t)\quad \text{ and }\quad z(t):=y''(t)-a^2y(t).
\]
				Differentiating \eqref{fourorderODE2}, we obtain
				\be\label{z''}
				z''-b^2z=\kap   p^* v^{p^*-1}y.
				\ee
				Using \eqref{fourorderODEbdy} and \eqref{v'''0>0}, we have initial data
				\be\label{yinitial}
				y(0)=0,
				\quad
				y'(0)=v''(0)>0,
				\ee
				and
\[
z(0)=y''(0)-a^2y(0)=v'''(0)>0.
\]

			We first claim that $z'(0)>0$.
				Indeed,
				$z'(0)=y'''(0)-a^2y'(0)
				=
				v^{(4)}(0)-a^2v''(0)$.
				Evaluating \eqref{fourorderODE2} at $t=0$ and using \eqref{fourorderODEbdy}, we get
				$
				v^{(4)}(0)
				=
				(a^2+b^2)v''(0)-a^2b^2\al+\kap  \al^{p^*}$.
				Hence
\[
z'(0) = b^2h(\al)-a^2b^2\al+\kap \al^{p^*}.
\]
				Using $h(\al)=a\al\delta$ and
				$\al^{p^*-1}
				=
				\frac{b^2-\delta^2}{b^2-1}$,
				we find
\[
z'(0) = a b\al [b(b-a)+\delta(b-\delta)]>0,
\]
				since $b>a$ and $\delta\in(1,b)$.

				We next show that   $y(t)>0$ for all $t>0$.
				By \eqref{yinitial},
				we have $y(t)>0$ for $t>0$ small. Let
\[
T:=\sup\{\tau>0: y(t)>0\ \text{for all }t\in(0,\tau)\}.
\]
			If $T<+\infty$, then $y>0$ on $(0,T)$. Since $v>0$, the right-hand
side of \eqref{z''} is positive on $(0,T)$. The Duhamel formula yields
\[
z(t) = z(0)\cosh(bt) + \frac{z'(0)}{b}\sinh(bt) + \int_0^t \frac{\sinh(b(t-s))}{b} \kap p^* v(s)^{p^*-1}y(s)\,\d s >0
\]for $0<t\leq T$. Applying the Duhamel formula once more to $y''-a^2y=z$
gives
\[
y(t) = \frac{h(\al)}{a}\sinh(at) + \int_0^t \frac{\sinh(a(t-s))}{a}z(s)\,\d s >0
\]
				for $0<t\leq T$. In particular, $y(T)>0$, contradicting the definition
of $T$. Hence $y(t)>0$ for all $t>0$.

				The preceding formulas now hold on the whole half-line. Since $z>0$,
we have
\[
y(t)
\geq
\frac{h(\al)}{a}\sinh(at),
\qquad t>0.
\]
Consequently,
\[
v(t)
=
v(0)+\int_0^t y(s)\,\d s
\geq
\al+\frac{h(\al)}{a^2}\bigl(\cosh(at)-1\bigr)
\longrightarrow+\infty,
\]
which contradicts \eqref{vdecay}. This completes the proof.
			\end{proof}


			\subsection{Continuity and monotonicity of \texorpdfstring{$c_3$}{c3}}

		We now study how the third boundary curvature coefficient depends on the
prescribed boundary value $\al$ in the minimal-boundary case $c_1=0$.  By Lemma
\ref{lem:subsupsolution}, applied with $c_1=0$, for each $\al \in(0,1)$ there exists a positive smooth radial solution $U_\al \in C^\infty(\overline{\B^{n+1}})$ of \eqref{c1=0}. Moreover, the lower bound in \eqref{c3bdd} shows that
the corresponding third boundary coefficient is positive. Hence, by
Lemmas \ref{lem:same-sign-uniqueness} and
\ref{lem:no-decay-positive-second-third}, this solution is unique among
positive radial solutions with positive third boundary coefficient.

			Since $U_\al $ is radial, $\sB_3^3U_\al $ is constant on $\S^n$. We therefore set
			\be\label{c3alpha}
			c_3(\al ) := \frac{\sB_3^3U_\al |_{\S^n}}{\al ^{p_3^*}},
			\ee
			so that
			\[ \sB_3^3U_\al  = c_3(\al )U_\al ^{p_3^*} \quad\text{ on }\,\Sn. \]
Thus $c_3(\al)$ is well defined for every $\al\in(0,1)$.

			When $c_1=0$, the lower bound in \eqref{c3bdd} becomes
			\be\label{c3lowerbdd}
			c_3(\al ) \ge \frac{(n^2-1)(n-3)}{4}\al ^{-\frac6{n-3}} (1-\al ^{\frac8{n-3}}),
			\ee
		and therefore
			\be\label{c3blowup}
			\lim_{\al\to 0^{+}}c_3(\al)=+\infty.
			\ee
			It remains to establish the continuity and strict monotonicity of the map $\al \mapsto c_3(\al )$, as well as the limiting behavior as $\al \to1^-$. This is the content of the following proposition.
			\begin{prop}\label{prop:c3}
				The map $c_3:(0,1)\to(0,+\infty)$ is continuous and strictly decreasing.  Moreover,
\[
\lim_{\al\to 0^{+}}c_3(\al)=+\infty\quad \text{ and }\quad \lim_{\al\to 1^{-}}c_3(\al)=0.
\]
			\end{prop}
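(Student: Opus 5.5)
We prove the four assertions in the order: strict monotonicity, continuity, and then the two limits. Throughout, $v_\al(t)=e^{-at}U_\al(e^{-t})$ denotes the Emden--Fowler profile of the solution $U_\al$ from Lemma \ref{lem:subsupsolution} with $c_1=0$. By Lemmas \ref{lem:same-sign-uniqueness} and \ref{lem:no-decay-positive-second-third}, together with $c_3(\al)>0$ from \eqref{c3lowerbdd}, this profile satisfies
\[
v_\al(0)=\al,\qquad v_\al'(0)=0,\qquad v_\al''(0)=-h(\al),\qquad v_\al'''(0)=c_3(\al)\al^{p_3^*}.
\]
Moreover, $U_\al$ is the unique positive radial solution of \eqref{c1=0} with positive third coefficient.

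\emph{Monotonicity.} Fix $0<\al_1<\al_2<1$ and set $\beta=\al_2/\al_1>1$. Since $p^*>1$, the function $\beta v_{\al_1}$ is a positive subsolution of \eqref{fourorderODE2}. Let $\phi=v_{\al_2}-\beta v_{\al_1}$. Then $\phi(0)=\phi'(0)=0$ and
\[
\phi''(0)=\al_2\Big(\frac{h(\al_1)}{\al_1}-\frac{h(\al_2)}{\al_2}\Big)>0,
\]
because $h(\al)/\al=a\sqrt{b^2-(b^2-1)\al^{p^*-1}}$ is strictly decreasing in $\al$. Suppose that $\phi'''(0)\ge0$. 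We rerun the proof of Lemma \ref{lem:Use} with the roles of the two strict inequalities swapped: now $k(0)=\phi''(0)>0$ and $k'(0)\ge0$. This still gives $k(t)\ge k(0)\cosh(at)>0$, hence $\phi>0$ for all $t>0$ and $\phi(t)\to+\infty$. That contradicts \eqref{vdecay}. Therefore $\phi'''(0)<0$, that is,
\[
c_3(\al_2)\al_2^{p_3^*}<\beta\, c_3(\al_1)\al_1^{p_3^*},
\qquad\text{equivalently}\qquad
\al_2^{\frac6{n-3}}c_3(\al_2)<\al_1^{\frac6{n-3}}c_3(\al_1).
\]
Since $\al_2^{6/(n-3)}>\al_1^{6/(n-3)}$ and both values of $c_3$ are positive, it follows that $c_3(\al_2)<c_3(\al_1)$.

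\emph{Continuity.} Let $\al_k\to\al\in(0,1)$. Lemma \ref{lem:subsupsolution} with $c_1=0$ (so $\lam=1$) gives the uniform two-sided bounds $\al_k H\le U_{\al_k}\le \al_k U_*$. By the integral representation $U_{\al_k}=\al_kH+\bar\cG^{(0,1)}(\kap U_{\al_k}^{p^*})$ and Schauder estimates, the family is bounded in $C^{4,\gamma}(\overline{\B^{n+1}})$. Hence a subsequence converges in $C^4$ to a positive radial solution $U$ of \eqref{c1=0} with boundary value $\al$. Its third coefficient is the limit of $c_3(\al_k)$. By \eqref{c3lowerbdd}, this limit is at least $\frac{(n^2-1)(n-3)}{4}\al^{-6/(n-3)}(1-\al^{8/(n-3)})>0$. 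Uniqueness then forces $U=U_\al$. Since every subsequence has the same limit, $c_3(\al_k)\to c_3(\al)$.

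\emph{Limits.} The limit $c_3\to+\infty$ as $\al\to0^+$ is \eqref{c3blowup}. For $\al\to1^-$, note that $c_3$ is decreasing and positive, so $\ell:=\lim_{\al\to1^-}c_3(\al)\ge0$ exists. The compactness argument above still applies as $\al_k\to1$, because the bounds $\al_kH\le U_{\al_k}\le U_*$ remain uniform. It yields a positive smooth radial solution with $\al=1$, $\sB_1^3=0$, and third coefficient $\ell$. If $\ell>0$, this contradicts Lemma \ref{lem:no-positive-c3-C-geq-one}. Hence $\ell=0$.

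The main obstacle is the monotonicity step. It needs the variant of Lemma \ref{lem:Use} in which the strict inequality sits on $\phi''(0)$ rather than $\phi'''(0)$, together with the monotonicity of $h(\al)/\al$. Once these two facts are in place, the uniqueness lemmas turn compactness into continuity and the limits follow.
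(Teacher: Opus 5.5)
Your proof is correct, and it reaches monotonicity and continuity by a different route from the paper.

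\textbf{Monotonicity.} The paper stays at the PDE level. It writes $U_\al=\al V_\al$ and compares the monotone iterations $V=H+\bar\cG^{(0,1)}(\kap\mu V^{p^*})$ for $\mu_1<\mu_2$, which gives $V_{\al_2}>V_{\al_1}$. It then applies Lemma \ref{lem:Comparison} to $V_{\al_2}-V_{\al_1}$ to conclude that $\sB_3^3V_\al|_{\Sn}=\al^{6/(n-3)}c_3(\al)$ is strictly decreasing. You obtain exactly this intermediate fact in the Emden--Fowler variable. Your ingredients are the scaled subsolution $\beta v_{\al_1}$, the branch identification $v_\al''(0)=-h(\al)$, and the monotonicity of $h(\al)/\al$. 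You also use the variant of Lemma \ref{lem:Use} with the strict inequality moved to $\phi''(0)$. That variant does go through, since $k(t)\ge k(0)\cosh(at)$ still forces $\phi\to+\infty$.

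\textbf{Continuity.} The paper proves $r(\mathscr L)<1$ using the strict supersolution $\al U_*-U_\al$ together with Lemmas \ref{lem:Comparison} and \ref{lem:boundary-comparison}, and then applies the implicit function theorem. You instead use compactness from the bounds $\al H\le U_\al\le \al U_*$ together with the uniqueness in Lemmas \ref{lem:same-sign-uniqueness} and \ref{lem:no-decay-positive-second-third}. The bound \eqref{c3lowerbdd} guarantees that the limit stays in the uniqueness class.

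\textbf{Comparison.} Your route is shorter and more elementary, and it reuses the shooting analysis already in place. The paper's route yields more: $C^1$ dependence of $U_\al$ in $C^{4,\gamma}$ and $\partial_\al U_\al>0$ (Remark \ref{rem:Sec5}), hence the monotonicity of $U_\al$ itself asserted in Theorem \ref{thm:Main}. Its monotonicity step also relies only on the monotone-iteration construction, not on the branch analysis.

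\textbf{Limits.} You handle both limits essentially as the paper does. Using monotonicity to get existence of $\ell$ replaces the paper's arbitrary-sequence argument. The paper's extra identification $U_\infty=U_*$ is not needed.
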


			We divide the proof of Proposition \ref{prop:c3} into several lemmas.

			\begin{lem}\label{lem:boundary-comparison}
				Let $Z=Z(r)$ and $W=W(r)$ be  positive radial functions in
				$\B^{n+1}$, whose radial profiles are continuous on $[0,1]$ and $C^2$
				up to $r=1$. Assume that
\[
Z=W=0,\quad \sB_1^3Z=\sB_1^3 W=0 \quad\text{ on }\,\S^n,
\]
				and
\[
Z_{rr}(1)>0,~ W_{rr}(1)>0. \]Then there exists $\delta>0$ such that $Z\ge \delta W$ on $\overline{\B^{n+1}}$. 
\end{lem}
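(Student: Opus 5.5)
The plan is to compare $Z$ and $W$ through the quotient $Z/W$. Near the boundary both vanish to second order, so we expand both near $r=1$ and handle the interior by compactness.

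\emph{Boundary behavior.} The boundary conditions $Z(1)=0$ and $\sB_1^3Z=Z_r+\frac{n-3}{2}Z=0$ at $r=1$ give $Z(1)=Z_r(1)=0$, and likewise for $W$. Taylor's theorem with the $C^2$ regularity up to $r=1$ then yields
\[
Z(r)=\tfrac12 Z_{rr}(\theta_r)(1-r)^2,\qquad W(r)=\tfrac12 W_{rr}(\theta'_r)(1-r)^2
\]
for some $\theta_r,\theta'_r\in(r,1)$. Since $Z_{rr}$ and $W_{rr}$ are continuous near $1$ with positive values at $r=1$, there exists $r_0\in(0,1)$ such that on $[r_0,1]$
\[
\tfrac14 Z_{rr}(1)\le Z_{rr}\le 2Z_{rr}(1),\qquad W_{rr}\le 2W_{rr}(1).
\]
Consequently, for $r\in[r_0,1)$,
\[
\frac{Z(r)}{W(r)}\ge \frac{Z_{rr}(1)}{4W_{rr}(1)}=:\delta_1>0 .
\]
This uses $W>0$ in the open ball, which is part of the hypothesis.

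\emph{Interior.} On $[0,r_0]$ both profiles are continuous, and $Z$ is strictly positive there. Hence $\min_{[0,r_0]}Z>0$ and $\max_{[0,r_0]}W<\infty$, so
\[
Z\ge \delta_2 W \quad\text{on } [0,r_0],\qquad \delta_2:=\frac{\min_{[0,r_0]} Z}{\max_{[0,r_0]} W}>0 .
\]

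\emph{Conclusion.} Set $\delta=\min\{\delta_1,\delta_2\}$. Then $Z\ge\delta W$ on $[0,1)$. At $r=1$ both functions vanish, so the inequality holds there as well. Since $Z$ and $W$ are radial, this gives $Z\ge\delta W$ on $\overline{\B^{n+1}}$.

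The only delicate point is the boundary layer. There, the first-order condition coming from $\sB_1^3$ is what forces a double zero, so the ratio $Z/W$ is controlled by the ratio of second derivatives. The strict positivity $Z_{rr}(1)>0$, and $W_{rr}(1)$ being finite and positive, is exactly what keeps this ratio bounded away from $0$. No other obstacles are expected.
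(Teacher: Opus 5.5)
Your proof is correct and follows essentially the paper's argument: the $\sB_1^3$ condition forces a double zero at $r=1$, Taylor expansion controls $Z/W$ near the boundary, and positivity plus compactness handles the rest. The paper phrases this as the continuous extension of $Z/W$ to $[0,1]$ with boundary value $Z''(1)/W''(1)$, whereas you give explicit Lagrange-remainder bounds. One small arithmetic slip: with your stated bounds the boundary-layer constant is $\frac{Z_{rr}(1)}{8W_{rr}(1)}$, not $\frac{Z_{rr}(1)}{4W_{rr}(1)}$. This does not affect the conclusion.
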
 

\begin{proof} By \eqref{Boundaryoperators-Ball}, the boundary conditions
$Z=\sB_1^3Z=0$ imply $Z(1)=Z'(1)=0$. Similarly, $W(1)= W'(1)=0$. Since $Z''(1)>0$ and $W''(1)>0$, it follows from Taylor expansion at $r=1$ that \[ \frac{Z(r)}{W(r)} \to \frac{Z''(1)}{W''(1)}>0 \quad\text{ as }\,r\to1^-.
\]
				Hence $Z/W$ extends continuously to $\S^n$ with a positive boundary value.
				Since $Z$ and $W$ are strictly positive in
$\B^{n+1}$, the extended quotient is positive on the compact interval
$[0,1]$. Therefore
\[
\delta := \min_{\overline{\B^{n+1}}}\frac{Z}{W} >0,
\]
				and hence
				$	Z\ge \delta W$ on $\overline{\B^{n+1}}$.
			\end{proof}

			\begin{lem}\label{lem:c3continous}
				For every $\gamma\in(0,1)$, the map
				$\al \to U_\al
				\in C^{4,\gamma}(\overline{\B^{n+1}})$
				is continuous in $(0,1)$. In fact, this map is $C^1$.
			\end{lem}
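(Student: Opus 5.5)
The plan is to obtain the continuity, and in fact the $C^1$ regularity, of $\al\mapsto U_\al$ from the implicit function theorem, in the same spirit as the proof of Lemma \ref{lem:ball-B0B2-construction-c3-asymptotic}, with $\bar\cG^{(0,1)}$ replacing $\bar\cG^{(0,2)}$. The uniqueness in Lemmas \ref{lem:same-sign-uniqueness} and \ref{lem:no-decay-positive-second-third} will be used to identify the branch produced by the implicit function theorem with the family $U_\al$.

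We first write $U_\al=\al H+V$ with $H$ as in \eqref{Hr}. Since $\al H$ is biharmonic and carries the boundary data $\al$ and $\sB_1^3(\al H)=0$, the problem \eqref{c1=0} becomes the fixed point equation
\[
\mathcal F(\al,V):=V-\bar\cG^{(0,1)}\big(\kap(\al H+V)^{p^*}\big)=0
\]
on $\mathscr X=C^{4,\gamma}_{\rm rad}(\overline{\B^{n+1}})$. This is valid because the representation \eqref{Ulimit-expression} holds for every solution: the function $U-\al H-\bar\cG^{(0,1)}(\kap U^{p^*})$ is biharmonic with zero $\sB_0^3$ and $\sB_1^3$ data, so it vanishes. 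Near any $(\al_0,V_{\al_0})$ the map $\mathcal F$ is $C^1$, since $U_{\al_0}>0$ on the closed ball and $s\mapsto s^{p^*}$ is smooth there. Its derivative is
\[
D_V\mathcal F=I-\mathcal L,\qquad \mathcal L\varphi:=\bar\cG^{(0,1)}\big(\kap p^*U_{\al_0}^{p^*-1}\varphi\big),
\]
which is compact on $\mathscr X$ by Schauder theory. By the Fredholm alternative, invertibility therefore reduces to showing that the linearized problem
\[
\Delta^2\varphi=\kap p^*U_{\al_0}^{p^*-1}\varphi \ \text{ in } \B^{n+1},\qquad \varphi=0,\ \sB_1^3\varphi=0 \ \text{ on } \Sn,
\]
has only the trivial radial solution.

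This nondegeneracy is the main obstacle. We would pass to the Emden--Fowler variable, so that $\psi(t)=e^{-at}\varphi(e^{-t})$ solves the linearization of \eqref{fourorderODE2} with $\psi(0)=\psi'(0)=0$ and $\psi\to0$ at infinity. The solutions of this linear ODE with $\psi(0)=\psi'(0)=0$ form a two-parameter family, parametrized by $(\psi''(0),\psi'''(0))$.

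The natural first attempt is the Duhamel argument of Lemma \ref{lem:Use}. If $\psi''(0)\ge0$ and $\psi'''(0)>0$, positivity propagates and $\psi\to+\infty$. This settles only one sign sector, and the remaining sectors are the difficulty.

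To handle them we would use the linearized Hamiltonian: differentiating $\mathcal H(v)\equiv0$ gives a conserved quantity that vanishes at infinity. Evaluated at $t=0$ with $\psi(0)=\psi'(0)=0$, it reads
\[
v'''(0)\cdot0-v''(0)\psi''(0)=0 .
\]
Since $v''(0)=-h(\al_0)\neq0$ for $\al_0<1$, this forces $\psi''(0)=0$.

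Then $\psi'''(0)\neq0$ unless $\psi\equiv0$. After a sign change we may assume $\psi'''(0)>0$, and Lemma \ref{lem:Use}, in its linear version with $\psi''(0)=0$, gives $\psi\to+\infty$, contradicting the decay. Hence the kernel is trivial. The implicit function theorem then yields a $C^1$ local branch $\al\mapsto V(\al)$ through $V_{\al_0}$.

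Finally we identify this branch with $U_\al-\al H$. For $\al$ near $\al_0$, the branch solutions are $C^4$-close to $U_{\al_0}$, so they are positive and satisfy $v''(0)<0$ and $v'''(0)>0$, because $c_3(\al_0)>0$. By Lemmas \ref{lem:same-sign-uniqueness} and \ref{lem:no-decay-positive-second-third} they must coincide with $U_\al$. This gives the local $C^1$ dependence around each $\al_0$, and hence on all of $(0,1)$.
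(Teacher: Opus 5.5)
Your proof is correct, but it reaches the key step, invertibility of $I-\mathscr L$, by a different route from the paper.

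The paper never computes the kernel. It takes $W=\al U_*-U_\al>0$ and shows $\Delta^2W>\kap p^*U_\al^{p^*-1}W$, which uses $\al<1$. Via Lemma~\ref{lem:Comparison} and the boundary-quotient Lemma~\ref{lem:boundary-comparison}, it upgrades $W>\mathscr LW$ to $\mathscr LW\le(1-\delta)W$. Positivity of $\mathscr L$ then bounds every eigenvalue by $1-\delta$, so $r(\mathscr L)<1$.

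You instead show $\ker(I-\mathscr L)=\{0\}$ directly from the radial ODE. The first variation $\psi'v'''+v'\psi'''-v''\psi''-(a^2+b^2)v'\psi'+a^2b^2v\psi-\kap v^{p^*}\psi$ of $\mathcal H$ along $\psi$ is conserved: its derivative is $\psi'$ times the equation for $v$ plus $v'$ times the linearized equation. It vanishes at infinity and equals $-v''(0)\psi''(0)$ at $t=0$. After that, the linear analogue of Lemma~\ref{lem:Use} disposes of $\psi'''(0)$. Phrase this as the first variation along $\psi$, not as ``differentiating $\mathcal H(v)\equiv0$''. The latter reads as differentiating in $\al$, which would be circular.

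The trade-off is as follows.
\begin{itemize}
\item The paper's argument depends on $U_\al$ being the minimal solution trapped below the supersolution $\al U_*$. In return it gives $r(\mathscr L)<1$, hence the positivity-preserving inverse $(I-\mathscr L)^{-1}=\sum_k\mathscr L^k$. This is exactly what Remark~\ref{rem:Sec5} needs for $\partial_\al U_\al>0$.
\item Your argument gives only injectivity, with no sign information on the inverse. However, it needs no supersolution. It in fact proves radial nondegeneracy of every positive radial solution of \eqref{c1=0} with $v''(0)\neq0$.
\end{itemize}

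Your final step identifies the implicit-function branch with $U_\al$, using continuity of $\sB_3^3$ and Lemmas~\ref{lem:same-sign-uniqueness} and~\ref{lem:no-decay-positive-second-third}. The paper's proof also needs this step but leaves it tacit, so spelling it out is worthwhile.
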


			\begin{proof}
				Let $U_*$ be the standard bubble defined in \eqref{standardbubble}, and let
				$H$ be the radial homogeneous lifting defined in \eqref{Hr}. In the  case $c_1=0$, the subsolution \eqref{subsolution} constructed in
				Lemma \ref{lem:subsupsolution} reduces to $\underline U=\al  H$, whereas the supersolution \eqref{supersolution} reduces to $\overline U=\al U_*$. Hence, by \eqref{Ulimit},
\begin{equation}\label{Ualpha-bounds}
\al H\leq U_\al\leq\al U_*
\qquad\text{ in }\,\B^{n+1}.
\end{equation}
				Moreover, in view of \eqref{Ulimit-expression}, $U_\al $ admits the fixed
				point representation
				\be\label{Ualpha_expression}
				U_{\al}=\al H+\bar \cG^{(0,1)}(\kap  U_{\al}^{p^{*}}).
				\ee

			We first obtain a strict version of the upper bound in
\eqref{Ualpha-bounds}.  Set $W=\al  U_*-U_\al $.  Using \eqref{eq:Bubble} and \eqref{Ulimt_equation}, together with
				$U_\al \le\al  U_*$ and $\al \in (0,1)$, we obtain
				\be\label{W1}
				\Delta^2W
				=
				\kappa\al  U_*^{p^*}-\kappa U_\al ^{p^*}
				\ge
				\kappa(\al -\al ^{p^*})U_*^{p^*}>0
				\quad\text{ in }\,\B^{n+1},
				\ee
				Moreover,
				\be\label{W2}
				W=0,\quad \sB_1^3W=0
				\quad\text{ on }\, \S^n.
				\ee
			Therefore, by the positivity of the Green operator
$\bar{\cG}^{(0,1)}$,  $W>0$ in $\B^{n+1}$.

				We now prove the local invertibility of the fixed point equation
				\eqref{Ualpha_expression}.
				Fix $\gamma\in(0,1)$, and set
\[
\mathscr X_0=C^{0,\ga }_{\mathrm{rad}}(\overline{\B^{n+1}})
\]
				equipped with the standard $C^{0,\gamma}$-norm. Define the linear operator
				$\mathscr L:\mathscr X_0\to\mathscr X_0$ by
\[
\mathscr{L}\phi = \bar{\cG}^{(0,1)}(\kap p^{*} U_{\al}^{p^{*}-1}\phi), \quad \phi\in \mathscr X_0.
\]
				Since
				$\bar{\cG}^{(0,1)}:C^{0,\ga }(\overline{\B^{n+1}})
				\to C^{4,\ga }(\overline{\B^{n+1}})$
				is bounded and the embedding
				$C^{4,\ga }(\overline{\B^{n+1}})
				\hookrightarrow C^{0,\ga }(\overline{\B^{n+1}})$
				is compact, $\mathscr L:\mathscr X_0\to\mathscr X_0$ is compact. Moreover, $\mathscr L$ is positive by  \eqref{positiveGreen}.

				We claim that
				$r(\mathscr L)<1$,
				where $r(\mathscr L)$ denotes the spectral radius of $\mathscr L$.
				Indeed,
				by the strict positivity of $W$ and \eqref{W1}, we have
				\begin{align*}
					\Delta^2W
					&=
					\kap  \al U_*^{p^{*}}-\kap  U_{\al}^{p^{*}}>\kap ((\al U_*)^{p^{*}}-U_{\al}^{p^{*}})  \\
					&>
					\kap  p^{*}U_{\al}^{p^{*}-1}(\al U_*-U_{\al})
					=
					\kap  p^{*} U_{\al}^{p^{*}-1}W\quad \text{ in }\, \B^{n+1}.
				\end{align*}
				Equivalently,
				$W>\mathscr L W $ in   $\B^{n+1}$.
				Let $Z=W-\mathscr LW$. Then
				$Z>0$ and $\Delta^2 Z>0$  in $ \B^{n+1}$. Also, in view of \eqref{W2} and the fact that
				$\bar{\cG}^{(0,1)}$ has homogeneous $\sB_0^3$- and
				$\sB_1^3$-boundary data, we have
\[
Z=0,~ \sB _1^3Z=0 \quad\text{ on }\, \Sn
\]

			Applying Lemma \ref{lem:Comparison} to $W$ and $Z$, we obtain $Z_{rr}(1), W_{rr}(1)>0$. Then by Lemma \ref{lem:boundary-comparison}, there exists $\delta>0$ such that
				$Z\geq \delta W$  on $\overline{\B^{n+1}}$.	 Thus
				$\mathscr	L W\le (1-\delta)W$.
				Let $\lambda\ne0$ be an eigenvalue of $\mathscr L$, and let
				$\phi\in\mathscr X_0\setminus\{0\}$ be a corresponding eigenfunction:
				$\mathscr L\phi=\lam\phi$.
				Since $\lambda\ne0$, we have
				$\phi=\mathscr L(\lambda^{-1}\phi)$.
				Thus $\phi\in C^{4,\ga }_{\mathrm{rad}}(\overline{\B^{n+1}})$ satisfies
\[
\phi=0\quad \text{ and }\quad \sB _1^3\phi=0 \quad\text{ on }\, \Sn.
\]
				By the same boundary behavior, $|\phi|/W$ is bounded on
				$\overline{\B^{n+1}}$. Let
\[
M:=\sup_{\overline{\B^{n+1}}}\frac{|\phi|}{W}.
\]
				Then $0<M<+\infty$, and
				$|\phi|\le MW$.
				Using the positivity of $\mathscr L$, we obtain
\[
|\lam |\,|\phi| = |\mathscr L\phi| \le \mathscr L|\phi| \le M \mathscr L W \le M(1-\delta)W.
\]
				Dividing by $W$ and taking the supremum yields
				$|\lambda|M\le M(1-\delta)$.
				Since $M>0$, it follows that
				$|\lambda|\le 1-\delta<1$.
				Since
				$\mathscr L$ is compact, this implies
				$r(\mathscr L)<1$.
				In particular,
				$I-\mathscr L:\mathscr X_0 \to \mathscr X_0$
				is invertible.

				Now consider the nonlinear map
\[
\mathcal F(\al,U) = U-\al H-\bar{\cG}^{(0,1)}(\kap U^{p^{*}}).
\]
				Then
				$\mathcal F(\al,U_{\al})=0$,
				and the Fr\'echet derivative with respect to $U$ at $(\al ,U_\al )$ is
				$D_U\mathcal F(\al,U_{\al})=I-\mathscr L$.
				Since $I-\mathscr L$ is invertible, the implicit function theorem implies that $\al \mapsto U_\al $ is locally $C^1$ as a map into
				$\mathscr X_0$. By the fixed point representation and the Schauder estimate
				for $\bar{\cG}^{(0,1)}$, this dependence is in fact locally $C^1$
				in $C^{4,\gamma}(\overline{\B^{n+1}})$. Since $\al \in(0,1)$ was
				arbitrary, the branch $U_{\al}$ is continuous on $(0,1)$.
				%
				%
			\end{proof}
			\begin{rem}\label{rem:Sec5}
				Set $\dot U_{\al}:=\partial_{\al}U_{\al}$. Differentiating
\eqref{Ualpha_expression}, we obtain
$(I-\mathscr L)\dot U_{\al}=H$.
Since $r(\mathscr L)<1$ and $\mathscr L$ is positive,
$(I-\mathscr L)^{-1}$ is positive. Hence
$\dot U_{\al}=(I-\mathscr L)^{-1}H>0$ in $\B^{n+1}$.
Therefore $U_{\al}$ is strictly increasing in $\al$.
			\end{rem}

			\begin{lem}\label{lem:c3-limit-one}
				$	\lim_{\al\to 1^{-}}c_3(\al)=0$.
			\end{lem}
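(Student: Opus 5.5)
We need $c_3(\al)\to0$ as $\al\to1^-$. The plan is to show $U_\al\to U_*$ in $C^{4,\gamma}(\overline{\B^{n+1}})$ as $\al\to1^-$; since $\sB_3^3$ is a third-order boundary operator, this gives
\[
\al^{p_3^*}c_3(\al)=\sB_3^3U_\al|_{\Sn}\to\sB_3^3U_*|_{\Sn}=0
\]
by \eqref{eq:Bubble} with $\lam=1$, and hence $c_3(\al)\to0$.

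\textbf{Compactness.} By \eqref{Ualpha-bounds}, $\al H\le U_\al\le \al U_*\le U_*$, so the family is uniformly bounded in $L^\infty$ and positive. Using the fixed point representation \eqref{Ualpha_expression} and the Schauder estimate $\bar\cG^{(0,1)}:C^{0,\gamma}\to C^{4,\gamma}$, we bootstrap. First, $\bar\cG^{(0,1)}$ maps $L^\infty$ into $C^{3,\gamma}$ (via $W^{4,p}$ for large $p$). Hence $U_\al$ is bounded in $C^{3,\gamma}$, so $U_\al^{p^*}$ is bounded in $C^{0,\gamma}$, and therefore $U_\al$ is bounded in $C^{4,\gamma}$, uniformly for $\al\in[1/2,1)$. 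By Arzel\`a--Ascoli, along any sequence $\al_k\to1^-$ a subsequence converges in $C^{4,\gamma'}$ for $\gamma'<\gamma$ to a radial limit $U_1$.

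\textbf{Identification of the limit.} Passing to the limit in \eqref{Ualpha_expression}, we get $U_1=H+\bar\cG^{(0,1)}(\kap U_1^{p^*})$. Thus $U_1$ solves $\Delta^2U_1=\kap U_1^{p^*}$ with $U_1=1$ and $\sB_1^3U_1=0$ on $\Sn$, and it satisfies $H\le U_1\le U_*$. The main obstacle is to conclude $U_1=U_*$, since uniqueness of \eqref{c1=0} has only been established among solutions with $v'''(0)>0$. We handle this with the squeeze from above together with the Emden--Fowler analysis. Set $W=U_*-U_1\ge0$. Then $\Delta^2W=\kap(U_*^{p^*}-U_1^{p^*})\ge0$ and $W=\sB_1^3W=0$ on $\Sn$, so Lemma~\ref{lem:Comparison} gives $\sB_3^3W\le0$, that is, $\sB_3^3U_1\ge\sB_3^3U_*=0$. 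Hence the limiting coefficient satisfies $c_3\ge0$, and in Emden--Fowler variables $v_1'''(0)\ge0$.

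Now compare $v_1$ with $w=(\cosh t)^{-a}$. Both have $v(0)=1$ and $v'(0)=0$. Since $\al=1$, \eqref{d2alpha} gives $v_1''(0)=\pm a$.

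If $v_1''(0)=+a$ and $v_1'''(0)\ge0$, the argument of Lemma~\ref{lem:no-decay-positive-second-third} applies. Its proof only used $z(0)\ge0$, $z'(0)>0$ and $y'(0)>0$, and at $\al=1$ we have $\delta=1$, so $z'(0)=ab[b(b-a)]>0$ still holds. This forces $v_1$ to blow up, contradicting decay.

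If $v_1''(0)=-a=w''(0)$ and $v_1'''(0)>0$, Lemma~\ref{lem:Use} applied with $u=w$ (an exact solution) gives $v_1-w\to+\infty$, again a contradiction.

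Hence $v_1'''(0)=0$, so $v_1$ and $w$ share all four initial data, and ODE uniqueness gives $U_1=U_*$.

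Since the limit is independent of the subsequence, $U_\al\to U_*$ in $C^{4,\gamma'}$ as $\al\to1^-$, and the boundary computation in the first paragraph yields $\lim_{\al\to1^-}c_3(\al)=0$.
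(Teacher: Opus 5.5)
Your proof is correct, but it identifies the limit by a different route from the paper. The paper extracts a $C^4$-convergent subsequence $U_{\al_j}\to U_\infty$ and gets $\sB_3^3U_\infty\ge0$ from $c_3(\al_j)>0$. It then rules out $\sB_3^3U_\infty>0$ by Lemma \ref{lem:no-positive-c3-C-geq-one}; at $\al=1$ that lemma covers both branches $v''(0)=\pm a$ at once, since $v''(0)+a\ge0$ there. For the remaining case $\sB_3^3U_\infty=0$, the paper appeals to Remark \ref{rem:1}(2): even reflection across the boundary of the half-space, together with the Wei--Xu classification of entire solutions, forces $U_\infty=U_*$.

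You instead obtain $\sB_3^3U_1\ge0$ from Lemma \ref{lem:Comparison} applied to $U_*-U_1$. You then replace the reflection/Wei--Xu step by a purely ODE argument:
\begin{itemize}
\item the branch $v_1''(0)=+a$ is excluded even when $v_1'''(0)=0$, by rerunning the proof of Lemma \ref{lem:no-decay-positive-second-third} with $\delta=1$, which indeed only needs $z(0)\ge0$;
\item the case $v_1''(0)=-a$, $v_1'''(0)>0$ is excluded by Lemma \ref{lem:Use} applied against $w=(\cosh t)^{-a}$;
\item the remaining initial data coincide with those of $w$, so ODE uniqueness gives $U_1=U_*$.
\end{itemize}

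Your route keeps the lemma self-contained within Section 5. It avoids the regularity check implicit in the reflection step (vanishing of $\pa_t u$ and $\pa_t^3 u$ on the boundary, and behavior at the boundary point corresponding to infinity), and it avoids the external classification theorem. The paper's route is shorter, given that citation.

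One small slip: at $\al=1$ one has $\delta(b-\delta)=b-1$, so $z'(0)=ab\,[\,b(b-a)+(b-1)\,]$, not $ab\cdot b(b-a)$. The sign, which is all you use, is unaffected.
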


			\begin{proof}
				We first note that \eqref{c3lowerbdd} implies  $c_3(\al)>0$ for every $\al\in (0,1)$.
				We next prove the limit as $\al \to1^-$. Let $\al _j\to1^-$ be an
				arbitrary sequence, and set $U_j:=U_{\al _j}$. By the construction in
				Lemma \ref{lem:subsupsolution}, 
\[
\al_jH\le U_j\le \al_jU_* \quad \text{ in }\, \B^{n+1}.
\]
				Hence $\{U_j\}$ is uniformly bounded from above and below by positive
				constants. Standard elliptic estimates then imply that, after passing to a
				subsequence,
				\be\label{Ujconvergence}
				U_j\to U_\infty
				\quad\text{ in }\,C^4(\overline{\B^{n+1}}).
				\ee
				Passing to the limit, we obtain
				\be\label{Uinfinity}
				\left\{\begin{aligned}
					&\Delta^2U_\infty=\kap  U_\infty^{p^{*}}
					&&\text{ in }\,\B^{n+1},\\
					&U_\infty=1,~\sB^3_1U_\infty=0
					&&\text{ on }\,\Sn.
				\end{aligned}\right.
				\ee
				Moreover, since $c_3(\al _j)>0$ and
				$\sB_3^3U_j
				=
				c_3(\al _j)\al _j^{p_3^*}$ on $\Sn$,
				we have $\sB_3^3U_j>0$ on $\S^n$. Therefore, by
				\eqref{Ujconvergence},
				$\sB^3_3U_\infty\ge0$  on $\Sn$.

				We claim that $\sB _3^3U_\infty=0$
				on $\Sn$.
				Indeed, since $U_\infty$ is radial, $\sB_3^3U_\infty$ is constant on
				$\S^n$. If $\sB_3^3U_\infty\not\equiv0$, then
				$\sB_3^3U_\infty>0$ on $\S^n$. Because $U_\infty=1$ on $\S^n$, this
				contradicts Lemma \ref{lem:no-positive-c3-C-geq-one}. Hence
				$\sB _3^3U_\infty=0$  on $\Sn$.

				By Remark \ref{rem:1}(2), the only solution of \eqref{Uinfinity} satisfying
				$\sB_3^3U_\infty=0$ on $\S^n$ is an Aubin--Talenti bubble. Thus
				$U_\infty=U_*$.
				Consequently,
				$\sB^3_3U_j\to0 $ on $\Sn$.
				Since
				$c_3(\al_j)=\frac{\sB _3^3U_j|_{\Sn}}{\al_j^{p_3^*}}$
				and $\al_j\to1^{-}$, we obtain
				$c_3(\al_j)\to0$.
				Since the sequence
$\{\al_j\}$ was arbitrary, the conclusion follows.
			\end{proof}

			\begin{lem}\label{lem:c3decreasing}
				The map $\al \mapsto c_3(\al )$ is strictly decreasing on $(0,1)$.
			\end{lem}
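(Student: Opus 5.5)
The plan is to compare the two Emden--Fowler profiles by rescaling and then apply the comparison principle of Lemma \ref{lem:Use}. Fix $0<\al_1<\al_2<1$ and argue by contradiction, assuming $c_3(\al_1)\le c_3(\al_2)$. Let $v_j(t)=e^{-at}U_{\al_j}(e^{-t})$, $j=1,2$, be the Emden--Fowler transforms of the radial solutions $U_{\al_j}$ constructed in Lemma \ref{lem:subsupsolution} with $c_1=0$. Each $v_j$ is a positive solution of \eqref{fourorderODE2} satisfying \eqref{vdecay} and $v_j(0)=\al_j$, $v_j'(0)=0$. By \eqref{c3lowerbdd} we have $c_3(\al_j)>0$, so \eqref{v'''0>0} gives $v_j'''(0)=c_3(\al_j)\al_j^{p_3^*}>0$. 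Lemma \ref{lem:no-decay-positive-second-third}, together with \eqref{twobranchs}, then forces the negative branch, $v_j''(0)=-h(\al_j)$.

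\textbf{Comparison function.} Set $\mu:=\al_2/\al_1>1$ and $u:=\mu v_1$. Since $p^*>1$ and $\mu>1$,
\[
u^{(4)}-(a^2+b^2)u''+a^2b^2u=\kap\mu v_1^{p^*}\le\kap(\mu v_1)^{p^*}=\kap u^{p^*},
\]
so $u$ is a positive subsolution of \eqref{fourorderODE2}. This mirrors the proof of Lemma \ref{lem:no-positive-c3-C-geq-one}, with $\al w$ replaced by $\mu v_1$. We want to apply Lemma \ref{lem:Use} with $v=v_2$ and this $u$.

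\textbf{Initial data.} The first two conditions hold directly: $u(0)=\al_2=v_2(0)$ and $u'(0)=0=v_2'(0)$.

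For the second derivative we need $v_2''(0)\ge u''(0)$, that is, $h(\al_2)\le \mu h(\al_1)$, or equivalently $h(\al_2)/\al_2\le h(\al_1)/\al_1$. By \eqref{d2alpha},
\[
\frac{h(\al)}{\al}=a\sqrt{b^2-(b^2-1)\al^{p^*-1}},
\]
which is strictly decreasing in $\al$ because $b^2>1$ and $p^*>1$. So this condition holds, and in fact strictly.

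For the third derivative we need $v_2'''(0)>\mu v_1'''(0)=\al_2\,c_3(\al_1)\,\al_1^{p_3^*-1}$. Under the contradiction hypothesis, and using $c_3(\al_1)>0$, $p_3^*>1$ and $\al_2>\al_1$,
\[
v_2'''(0)=c_3(\al_2)\al_2^{p_3^*}\ge c_3(\al_1)\al_2^{p_3^*}=c_3(\al_1)\,\al_2\,\al_2^{p_3^*-1}>c_3(\al_1)\,\al_2\,\al_1^{p_3^*-1}.
\]

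\textbf{Conclusion.} Lemma \ref{lem:Use} now yields $v_2(t)-\mu v_1(t)\to+\infty$ as $t\to+\infty$. This contradicts \eqref{vdecay}, since both $v_2$ and $\mu v_1$ tend to $0$. Hence $c_3(\al_1)>c_3(\al_2)$ whenever $\al_1<\al_2$.

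The only delicate point is the second-derivative comparison, because the rescaling must be compatible with the Hamiltonian constraint \eqref{v''(0)2}. It reduces to the explicit monotonicity of $h(\al)/\al$ shown above, which is elementary. The remaining ingredients are that $c_3(\al)$ is well defined on $(0,1)$ (uniqueness from Lemmas \ref{lem:same-sign-uniqueness} and \ref{lem:no-decay-positive-second-third}) and positive, both established earlier in this section.
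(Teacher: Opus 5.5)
Your proof is correct, and it takes a genuinely different route from the paper's.

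\textbf{The paper's route.} The paper works on the ball. It normalizes $V_\al=U_\al/\al$, which solves $V=H+\bar\cG^{(0,1)}(\kap\al^{p^*-1}V^{p^*})$. Running the monotone iteration from $H$ and using Boggio positivity, it gets $V_{\al_1}<V_{\al_2}$. It then applies Lemma \ref{lem:Comparison} to $\Phi=V_{\al_2}-V_{\al_1}$, which satisfies $\Delta^2\Phi>0$ and $\Phi=\sB_1^3\Phi=0$ on $\Sn$, to obtain $\sB_3^3V_{\al_2}<\sB_3^3V_{\al_1}$. Finally it writes $c_3(\al)=\al^{1-p_3^*}\sB_3^3V_\al$ as a product of two positive strictly decreasing factors.

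\textbf{Your route.} You stay entirely in the Emden--Fowler variable. The Hamiltonian constraint together with Lemma \ref{lem:no-decay-positive-second-third} forces $v_j''(0)=-h(\al_j)$. The explicit decrease of $h(\al)/\al$ gives the second-derivative ordering. The rescaled subsolution $\mu v_1$, the same device as in Lemma \ref{lem:no-positive-c3-C-geq-one}, then feeds Lemma \ref{lem:Use}. Your contradiction hypothesis is only used to produce $v_2'''(0)>\mu v_1'''(0)$, that is, $\al_2^{p_3^*-1}c_3(\al_2)>\al_1^{p_3^*-1}c_3(\al_1)$. So your argument in effect shows that $\al^{p_3^*-1}c_3(\al)=\sB_3^3V_\al|_{\Sn}$ is nonincreasing. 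This is the paper's intermediate step, obtained by shooting rather than by the Green-function comparison, and the final step (multiplying by $\al^{1-p_3^*}$) is the same.

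\textbf{What each buys.} Your version is an a priori statement: it holds for any two positive decaying radial solutions with positive $\sB_3^3$-coefficient. It therefore never needs to identify them with the solutions produced by the sub/supersolution scheme. The paper's version avoids the Hamiltonian and branch-selection machinery. It also yields the extra structural fact that the normalized profiles $U_\al/\al$ increase pointwise in $\al$.
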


			\begin{proof}
				Write $U_{\al}=\al  V_{\al}$.
				It follows from \eqref{Ulimt_equation} and \eqref{Ualpha_expression} that  $V_\al $ satisfies
\[
V_{\al} = H+\bar{\cG}^{(0,1)} (\kap \al^{p^*-1}V_{\al}^{p^*}),
\]
				together with the boundary conditions
\[
V_{\al}=1 , ~ \sB _1^3V_{\al}=0 \quad\text{ on }\,\Sn,
\]
				where  $\bar{\cG}^{(0,1)}$ is the  Boggio Green kernel and $H$ is given by \eqref{Hr}.
				Let $0<\al_1<\al_2<1$. Set
				$V_i=V_{\al_i}$ and $\mu_i=\al_i^{p^*-1}$ for $i=1,2$.
				Then $\mu_1<\mu_2$.  For each $i=1,2$, $V_i$ is obtained from the monotone iteration scheme
				corresponding to the fixed point equation
\[
V=H+\bar{\cG}^{(0,1)}(\kap \mu_i V^{p^*}),
\]
				which follows by normalizing the fixed point representation \eqref{Ulimit-expression}.
				Starting the iteration from $H$,
\[
V_{i,0}=H,\quad V_{i,k+1}=H+\bar{\cG}^{(0,1)}(\kap \mu_iV_{i,k}^{p^*}),\quad k=0,1,2,\ldots,
\]
				we obtain by induction that
\[
V_{1,k}\le V_{2,k}\quad\text{ in }\,\B^{n+1}, \quad\forall\, k=0,1,2,\ldots.
\]
				Passing to the limit yields $V_1\le V_2$ in $\B^{n+1}$. Moreover,
\[
V_2-V_1 = \bar{\cG}^{(0,1)}(\kap (\mu_2V_2^{p^*}-\mu_1V_1^{p^*})),
\]
				and
\[
\mu_2V_2^{p^*}-\mu_1V_1^{p^*} \ge (\mu_2-\mu_1)V_1^{p^*}>0.
\]
				By \eqref{positiveGreen}, it follows that
				$	V_2>V_1$ in $\B^{n+1}$.

				Set $\Phi:=V_2-V_1$.
				Then $\Phi$ is radial and positive in $\B^{n+1}$. Moreover,
\[
\Delta^2\Phi = \kap \al_2^{p^*-1}V_2^{p^*} - \kap \al_1^{p^*-1}V_1^{p^*} > 0 \quad\text{ in }\,\B^{n+1},
\]
				and \[
				\Phi=0,\quad  \sB _1^3\Phi=0
				\quad\text{ on }\,\Sn.
				\]
				Applying Lemma \ref{lem:Comparison} to $\Phi$ gives
				$\sB _3^3\Phi<0$ on $\Sn$.
				Consequently,
				$\sB _3^3V_2<\sB _3^3V_1$.
				Thus the map
				$\al \mapsto \sB _3^3V_{\al}$
				is strictly decreasing on $(0,1)$.

				Finally, since
\[
c_3(\al ) = \frac{\sB_3^3U_\al |_{\S^n}}{\al ^{p_3^*}} = \al ^{1-p_3^*}\sB_3^3V_\al |_{\S^n},
\]
				and
				$1-p_3^*<0$,
				the factor $\al ^{1-p_3^*}$ is strictly decreasing on $(0,1)$. Moreover,
				$c_3(\al )>0$ for every $\al \in(0,1)$, and hence
				$\sB_3^3V_\al >0$ on $\S^n$. Therefore the product
				$\al ^{1-p_3^*}\sB_3^3V_\al $
				is strictly decreasing in $\al $. Hence $c_3(\al )$ is strictly
				decreasing on $(0,1)$.
			\end{proof}

			\begin{proof}[Proof of Proposition \ref{prop:c3}]
				The proof  follows from \eqref{c3alpha}, \eqref{c3blowup} and Lemmas \ref{lem:c3continous}--\ref{lem:c3decreasing}.
			\end{proof}
			\begin{proof}[Proof of Theorem \ref{thm:Main}]
				The proof follows from Theorem \ref{thm:main0}, Proposition
\ref{prop:c3}, Remark \ref{rem:Sec5}, and Lemmas
\ref{lem:subsupsolution}, \ref{lem:no-positive-c3-C-geq-one},
\ref{lem:same-sign-uniqueness}, and
\ref{lem:no-decay-positive-second-third}.
			\end{proof}

			\appendix
			\section{Technical lemmas}\label{app:A}
For completeness, we collect in this appendix several standard analytic
tools used in the moving-spheres argument. 
We begin with the classical Hardy--Littlewood--Sobolev inequality in \cite{L1983}.
				\begin{lem}\label{HLS}
					Let $n\geq1$, $t\in (0,n)$, and $1<p<q<+\infty$ be such that $t+\frac{n}{q}=\frac{n}{p}$. Then we have
					\[
					\Big\|\int_{\Rn }\frac{f(y)}{|x-y|^{n-t}}\, \d y  \Big\|_{L^{q}(\Rn )}\leq C(n,t,p,q)\|f\|_{L^{p}(\Rn )},\quad \forall\, f\in L^{p}(\Rn ),
					\]
					where $C(n,t,p,q)>0$ is a constant  depending only on $n,t,p,q$.
				\end{lem}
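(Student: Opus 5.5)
This is the classical Hardy--Littlewood--Sobolev inequality, so the route is the standard Hedberg-type argument. The plan is to get a weak-type bound for the Riesz potential $I_t f(x)=\int_{\Rn}|x-y|^{t-n}f(y)\,\d y$ and then upgrade it to a strong bound by Marcinkiewicz interpolation. We may assume $f\ge0$, since $|I_tf|\le I_t|f|$.

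\emph{Step 1 (pointwise splitting).} Fix $x$ and a radius $R>0$, and split $I_tf(x)=\int_{|x-y|<R}+\int_{|x-y|\ge R}$.

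For the near part, decompose into dyadic annuli $2^{-k-1}R\le|x-y|<2^{-k}R$. This bounds it by
\[
\sum_{k\ge0}(2^{-k}R)^{t}\,\frac{1}{(2^{-k}R)^n}\int_{B_{2^{-k}R}(x)}f\,\d y\le C R^{t}\,Mf(x),
\]
where $M$ is the Hardy--Littlewood maximal function. The geometric series converges because $t>0$.

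For the far part, apply H\"older's inequality with the conjugate exponent $p'$:
\[
\int_{|x-y|\ge R}\frac{f(y)}{|x-y|^{n-t}}\,\d y\le\|f\|_{L^p}\Big(\int_{|z|\ge R}|z|^{(t-n)p'}\,\d z\Big)^{1/p'}=C\,R^{t-n/p}\|f\|_{L^p}.
\]
The integral is finite precisely because $(n-t)p'>n$, which is equivalent to $t<n/p$. This in turn follows from $t+n/q=n/p$ with $q<\infty$.

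\emph{Step 2 (optimizing $R$).} Choose $R$ so that the two bounds balance, namely $R^{n/p}=\|f\|_{L^p}/Mf(x)$. This gives
\[
|I_tf(x)|\le C\,\|f\|_{L^p}^{1-p/q}\,(Mf(x))^{p/q},
\]
using $1-\tfrac{tp}{n}=\tfrac{p}{q}$.

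\emph{Step 3 (conclusion).} Raise to the power $q$ and integrate: $\|I_tf\|_{L^q}^q\le C\|f\|_{L^p}^{q-p}\|Mf\|_{L^p}^{p}$. Since $p>1$, the maximal theorem gives $\|Mf\|_{L^p}\le C\|f\|_{L^p}$, and therefore $\|I_tf\|_{L^q}\le C\|f\|_{L^p}$.

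With Hedberg's trick no interpolation step is actually needed; the argument is direct. The only delicate point is the use of the maximal inequality, which requires $p>1$; this is exactly the hypothesis of the lemma. Since the result is classical, in the paper it suffices to cite \cite{L1983}.
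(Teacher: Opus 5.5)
Your proof is correct, and it is a genuinely different route, because the paper gives no proof of this lemma: it quotes it from Lieb \cite{L1983}.

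Your Hedberg argument is complete as written:
\begin{itemize}
\item the dyadic near-part bound $CR^{t}Mf(x)$ is right;
\item the H\"older tail bound $CR^{t-n/p}\|f\|_{L^p}$ is right, and it is finite precisely because $t=n/p-n/q<n/p$;
\item the balancing choice $R^{n/p}=\|f\|_{L^p}/Mf(x)$, together with $1-tp/n=p/q$, gives the pointwise inequality $|I_tf|\le C\|f\|_{L^p}^{1-p/q}(Mf)^{p/q}$;
\item the $L^p$ maximal theorem for $p>1$ then finishes.
\end{itemize}

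What each approach buys: yours is self-contained and elementary modulo the maximal theorem, with an explicit but non-sharp constant. Lieb's work concerns sharp constants and extremal functions (the sharp constant is computed explicitly in the conformally invariant case), which is much more than the paper needs. The paper only uses the inequality with some constant, in $\R^{n+1}$ with $t=4$ and in $\Rn$ with $t=3$, so your argument would fully suffice in place of the citation.

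Two cosmetic points. First, your opening plan (a weak-type bound followed by Marcinkiewicz interpolation) is superfluous, as you yourself note at the end; Hedberg's inequality gives the strong bound directly. Second, the choice of $R$ requires $0<Mf(x)<\infty$. This holds for a.e.\ $x$ once $f\not\equiv0$, and the case $f=0$ a.e.\ is trivial.
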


			We  also recall a family of Hardy-Littlewood-Sobolev type inequalities due to Gluck \cite{G2020}, which play a crucial  role in the following singular integral estimates.  Let $a,b\in\R$ satisfy
			\be\label{Intro-a-b-1}
			b\geq 0, \quad 0<a +b<n+1-b
			\ee
			and
			\be\label{Intro-a-b-2}
			\frac{n+1-a -2b}{2(n+1)}+\frac{n+1-a }{2n}<1.
			\ee
		For $X=(x,t)\in\R^{n+1}_+$ and $Y=(y,s)\in\R^{n+1}_+$, define the
integral operators
\[
\mathrm{E}_{a ,b}f(X)=\int_{\Rn}\frac{t^{b}f(y)}{(|x-y|^2+t^2)^{\frac{n+1-a }{2}}}\, \d y
\]
			and
\[
\mathrm{R}_{a ,b}g(x)=\int_{\R^{n+1}_{+}}\frac{s^{b}g(Y)}{(|x-y|^2+s^2)^{\frac{n+1-a}{2}}}\, \d Y.
\]
			\begin{lem}[Theorem 1.1 and Corollary 1.2 in \cite{G2020}]\label{lem:Gluck}
				Assume the conditions \eqref{Intro-a-b-1} and \eqref{Intro-a-b-2} as above. Then there are two sharp constants $C_1(n,a ,b)$ and $C_2(n,a ,b)$ such that
\[
\|\mathrm{E}_{a ,b}f\|_{L^{\frac{2(n+1)}{n+1-a -2b}}(\R^{n+1}_{+})}\leq C_1(n,a ,b)\|f\|_{L^{\frac{2n}{n+a -1}}(\Rn)},\quad \forall\, f\in L^{\frac{2n}{n+a -1}}(\Rn),
\]
				and
\[
\|\mathrm{R}_{a ,b}g\|_{L^{\frac{2n}{n+1-a }}(\Rn)}\leq C_2(n,a ,b)\|g\|_{L^{\frac{2(n+1)}{n+1+a +2b}}(\R^{n+1}_{+})},\quad \forall\, g\in L^{\frac{2(n+1)}{n+1+a +2b}}(\R^{n+1}_{+}).
\]
			\end{lem}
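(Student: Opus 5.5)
This is the weighted Hardy--Littlewood--Sobolev inequality of Gluck, quoted from \cite{G2020}. Within the paper one may simply cite it. If a self-contained argument is wanted, I would first prove a non-sharp version, which is all the moving-spheres estimates use, by reducing it to Lemma \ref{HLS} and to one-dimensional estimates in the normal variable.

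\emph{Step 1 (duality).} Write $\langle \mathrm{E}_{a,b}f,g\rangle_{\R^{n+1}_+}=\langle f,\mathrm{R}_{a,b}g\rangle_{\Rn}$. Both operators have the same kernel $t^b(|x-y|^2+t^2)^{-(n+1-a)/2}$. The exponents are dual: the conjugate of $\frac{2(n+1)}{n+1-a-2b}$ is $\frac{2(n+1)}{n+1+a+2b}$, and the conjugate of $\frac{2n}{n+a-1}$ is $\frac{2n}{n+1-a}$. Hence the second inequality follows from the first, and it suffices to prove the bound for $\mathrm{E}_{a,b}$.

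\emph{Step 2 (slicing in $t$).} Fix $t>0$. Then $\mathrm{E}_{a,b}f(\cdot,t)=t^b\,(k_t*f)$ with $k_t(x)=(|x|^2+t^2)^{-(n+1-a)/2}$. Set $p=\frac{2n}{n+a-1}$ and $q=\frac{2(n+1)}{n+1-a-2b}$. I would aim for a mixed-norm bound and then a one-dimensional inequality.

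The pointwise estimate is $k_t(x)\le \min\{|x|^{-(n+1-a)},\,t^{-(n+1-a)}\}$. Split $f$ at a level adapted to $t$, as in Hedberg's proof of HLS: the near part $|x-y|<\rho$ is controlled by the maximal function $Mf(x)$, and the far part by H\"older's inequality. This gives
\[
t^b|k_t*f|(x)\lesssim t^{b}\,\min\{t^{-(n+1-a)}\rho^{n}Mf(x)+\rho^{-(n+1-a)+n/p'}\|f\|_{p},\ \cdots\}.
\]
From this I would deduce the layer-cake estimate $|\{(x,t):|\mathrm{E}f|>\tau\}|\lesssim (\|f\|_p/\tau)^q$. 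The scaling $q=\frac{(n+1)p}{n}\cdot(\text{exponent relation})$ is forced by homogeneity: $f(\lambda\cdot)$ versus $\mathrm{E}f(\lambda\cdot)$ gives exactly $\frac{n+1}{q}=\frac{n}{p}-a+1-b+\dots$, and this has to be checked against the stated exponents. The result is a weak-type $(p,q)$ bound.

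\emph{Step 3 (strong type).} Perturb $a$ slightly in both directions, keeping $b$ fixed. Conditions \eqref{Intro-a-b-1}--\eqref{Intro-a-b-2} are open, so nearby pairs remain admissible. Marcinkiewicz interpolation between two weak-type bounds then yields the strong-type inequality.

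\emph{Main obstacle.} The delicate point is the endpoint behaviour in $t$ near $0$ and $\infty$. The weight $t^b$ together with the decay $t^{-(n+1-a)+n/p'}$ must make the $t$-integral converge. That is exactly where \eqref{Intro-a-b-2} enters, since it is equivalent to $q>p$ in the appropriate sense. I would verify this inequality first. Sharpness of the constants is not needed for the paper's use, so I would not attempt it and would rely on \cite{G2020} for it.
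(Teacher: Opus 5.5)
Citing \cite{G2020} is exactly what the paper does; it gives no proof of this lemma. The self-contained argument you sketch, however, would not go through as written.

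\textbf{Step 3 is not a valid use of Marcinkiewicz interpolation.} Changing $a$ changes the kernel $(|x-y|^2+t^2)^{-\frac{n+1-a}{2}}$. So weak-type bounds for $\mathrm{E}_{a\pm\var,b}$ say nothing about $\mathrm{E}_{a,b}$, since interpolation needs \emph{one} sublinear operator bounded at two exponent pairs. Moreover, $\mathrm{E}_{a,b}[f(\lambda\,\cdot)]=\lambda^{1-a-b}(\mathrm{E}_{a,b}f)(\lambda\,\cdot)$. Hence any weak-type $(\tilde p,\tilde q)$ bound for this fixed operator must lie on the line $\frac{n+1}{\tilde q}=\frac{n}{\tilde p}+1-a-b$. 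That relation is exact, with no ``$\dots$'', and the stated $p,q$ satisfy it. You would therefore have to prove weak type at two points of this line on either side of $(p,q)$.

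\textbf{The near-field bound in Step 2 is too lossy.} The bound you display is $t^{-(n+1-a)}\rho^{n}Mf(x)$. Take $(b,a)=(0,4)$, which the paper uses for both $I_3$ and $II_1$. Optimize in $\rho$ and intersect with the H\"older bound $t^{-\frac{n+1}{q}}\|f\|_p$. For compactly supported $f\neq0$ we have $Mf(x)\gtrsim|x|^{-n}$, so the set where both bounds exceed $\tau$ has $t$-sections of length $\gtrsim|x|^{-\frac{n}{n-3}}$ at large $|x|$. Its measure is therefore infinite for every $n\ge4$.

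\textbf{A repair that avoids interpolation.} Both defects disappear if you control the whole convolution by an $x$-dependent function, not just its near part. For $0\le\mu\le n+1-a$, AM--GM gives
\[
(|z|^2+t^2)^{-\frac{n+1-a}{2}}\le t^{-\mu}|z|^{-(n+1-a-\mu)}.
\]
H\"older gives the second branch below, since $(n+1-a)p'=2n>n$. With $\gamma:=a-1+\mu$ this yields
\[
|\mathrm{E}_{a,b}f(x,t)|\le C\min\Big\{t^{b-\mu}\int_{\Rn}\frac{|f(y)|\,\d y}{|x-y|^{n-\gamma}},\; t^{-\frac{n+1}{q}}\|f\|_{L^p(\Rn)}\Big\}.
\]
Integrate the $q$-th power in $t$ first; this converges at $t=0$ if and only if $q(b-\mu)>-1$. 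It bounds $\int_0^\infty|\mathrm{E}_{a,b}f|^q\,\d t$ by $C\|f\|_p^{q-s}$ times the $s$-th power of the Riesz potential at $x$, where $\frac1s=\frac1p-\frac{\gamma}{n}$. Lemma \ref{HLS} then gives the strong-type inequality directly.

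An admissible $\mu\ge0$ with $1-a<\mu<b+\frac1q$ exists if and only if $1-a-b<\frac1q$. Using $1-a-b=\frac{n+1}{q}-\frac{n}{p}$, this is equivalent to $q>p$, which is exactly \eqref{Intro-a-b-2}. So that condition enters at $t\to0$, not through the decay as $t\to\infty$. Your duality argument in Step 1 is correct and then gives the bound for $\mathrm{R}_{a,b}$.
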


		We conclude by recalling two inversion lemmas of Li \cite{L2004}
that are used to identify the limiting profile in the moving-spheres
argument.
			\begin{lem}[Lemma 5.7 in  \cite{L2004}]\label{lem:Technique1}
				For $n\geq 1$ and $\nu\in \R$, let $f$ be a function defined on $\Rn$ with values in $[-\infty,+\infty]$ satisfying
\[
\Big(\frac{\lam }{|y-x|} \Big)^{\nu}f \Big(x+\frac{\lam ^2(y-x)}{|y-x|^2} \Big)\leq f(y), \quad\forall\, \lam >0,~ |x-y|>\lam >0.
\]
				Then $f\equiv \text{const}$ or $\pm\infty$.
			\end{lem}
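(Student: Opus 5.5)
The plan is to show directly that $f(z)\le f(y)$ for every pair of points $y,z\in\Rn$. By symmetry this forces $f$ to be constant, with the constant allowed to be $\pm\infty$. The idea is to realize $z$ as the image of $y$ under a spherical inversion whose center $x$ runs off to infinity along the line through $y$ and $z$. As the center escapes, the conformal factor $(\lam/|y-x|)^{\nu}$ tends to $1$.

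\emph{Construction.} Fix $y\neq z$ and set $d=|z-y|$ and $e=(z-y)/d$. For $s>0$ put
\[
x_s=z+se,\qquad \lam_s=\sqrt{s(s+d)}.
\]
Then $y-x_s=-(s+d)e$, so $|y-x_s|=s+d>\lam_s$ and the hypothesis applies to the pair $(x_s,\lam_s)$. A one-line computation gives
\[
x_s+\frac{\lam_s^2(y-x_s)}{|y-x_s|^2}=z+se-se=z,
\]
and
\[
\frac{\lam_s}{|y-x_s|}=\Big(\frac{s}{s+d}\Big)^{1/2}.
\]
Hence the hypothesis reads
\[
f(z)\le \Big(\frac{s+d}{s}\Big)^{\nu/2} f(y)\qquad\text{for all } s>0 .
\]

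\emph{Passing to the limit.} Let $s\to+\infty$, so the factor $\big((s+d)/s\big)^{\nu/2}$ tends to $1$; this works for any sign of $\nu$.
\begin{itemize}
\item If $f(y)$ is finite, the right-hand side converges to $f(y)$, so $f(z)\le f(y)$.
\item If $f(y)=+\infty$, the inequality $f(z)\le f(y)$ holds trivially.
\item If $f(y)=-\infty$, the right-hand side is $-\infty$ for every $s$ because the factor is positive, so $f(z)=-\infty=f(y)$.
\end{itemize}
In all cases $f(z)\le f(y)$.

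\emph{Conclusion.} Since $y$ and $z$ were arbitrary, exchanging their roles gives $f(y)=f(z)$. Therefore $f$ is identically equal to a constant in $[-\infty,+\infty]$, which is the claim.

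The only delicate point is the bookkeeping with the extended values $\pm\infty$ and with the sign of $\nu$; it is settled by the positivity of the factor and the case split above. No step is a genuine obstacle once the escaping inversion center $x_s$ is chosen.
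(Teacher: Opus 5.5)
Your proof is correct. The choice $x_s=z+se$, $\lambda_s=\sqrt{s(s+d)}$ sends $y$ exactly to $z$ with conformal factor $(s/(s+d))^{\nu/2}\to 1$, and your case split on $f(y)$ finite, $+\infty$, or $-\infty$ handles the extended values properly. The paper does not prove this lemma but quotes it from Li \cite{L2004}; to my recollection, your escaping-center argument is essentially the standard elementary proof used there.
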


			\begin{lem}[Lemma 5.8 in \cite{L2004}]\label{lem:Technique2}
				Let $n\geq 1$, $\nu\in \R$ and  $f\in C(\Rn)$. Assume that for any $x\in \Rn$, there exists $\lam (x)>0$ such that
\[
\Big(\frac{\lam (x)}{|y-x|}\Big)^{\nu}f\Big(x+\frac{\lam (x)^2(y-x)}{|y-x|^2}\Big)=f(y), \quad \forall\, y\in\Rn\backslash\{x\}.
\]
				Then there exist $a\geq 0$, $\var>0$ and $x_0\in \Rn$ such that
\[
f(x)=\pm a\Big(\frac{1}{\var^2+|x-x_0|^2}\Big)^{\frac{\nu}{2}}.
\]
			\end{lem}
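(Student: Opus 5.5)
The statement just before the cut is Lemma \ref{lem:Technique2}, Li's inversion lemma. I would prove it in two stages. First I determine how $f$ behaves at infinity. Then I use that behavior to force the explicit profile. Since $f$ is continuous and invariant under the given inversions, it cannot vanish anywhere unless $f \equiv 0$. Indeed, if $f(y) = 0$ at one point, the inversion identity centered at any $x$ propagates the zero along images. Separately, applying the identity at a center $x$ with $y \to \infty$ gives
\[
\lim_{|y|\to\infty}|y|^{\nu}f(y)=\lam(x)^{\nu}f(x).
\]
So either $f \equiv 0$ (the case $a = 0$), or $f$ has constant sign. Replacing $f$ by $-f$ if necessary, I assume $f > 0$.

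Next I set $A := \lim_{|y|\to\infty}|y|^{\nu}f(y)>0$. From the identity above, $\lam(x)^{\nu} = A/f(x)$. I then compare with the Kelvin transform centered at the origin:
\[
g(y):=|y|^{-\nu}f\big(y/|y|^2\big).
\]
The function $g$ extends continuously to $0$ with $g(0)=A$, and it again satisfies an inversion symmetry at every point with an explicit radius. The goal is to show that $g$ (equivalently $f$) is smooth and that $f^{-2/\nu}$ is a quadratic polynomial $\alpha|x|^2+\beta\cdot x+\gamma$. The standard route, following Li--Zhu, is to expand the identity near infinity. Write $f(y)=|y|^{-\nu}\big(A+\beta\cdot y/|y|^2+o(|y|^{-1})\big)$ and compare the two expansions of $\lim |y|^{\nu}f(y)$ obtained with centers $x$ and $0$. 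This comparison gives the first-order coefficients, and from them
\[
\nabla_x\big(f(x)^{-2/\nu}\big)=2\alpha x+\beta
\]
for constants $\alpha$ and $\beta$.

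The main obstacle is justifying this expansion, i.e.\ differentiability of $g$ at $0$, given only continuity of $f$. My first attempt would be to compute the second-order term directly. For fixed $x$, expand $|y|^{\nu}\,(\lam(x)/|y-x|)^{\nu}\,f(x+\lam^2(y-x)/|y-x|^2)$ as $|y|\to\infty$. This represents $|y|^{\nu}f(y)-A$ exactly, to first order, in terms of $f$ near $x$ together with $x$ and $\lam(x)$. Since the identity holds for every $x$, it yields a difference-quotient formula showing that $g$ is differentiable at $0$ with gradient an explicit function of $x$ and $f(x)$. Requiring independence of $x$ then gives the quadratic relation for $f^{-2/\nu}$ with no derivatives of $f$ assumed.

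Once $f^{-2/\nu}=\alpha|x|^2+\beta\cdot x+\gamma$ is known, I complete the square. Positivity and decay like $|x|^{-\nu}$ force $\alpha>0$ and a positive discriminant-type term. This gives
\[
f=a\,(\var^2+|x-x_0|^2)^{-\nu/2}.
\]
Finally, I check consistency of the identity, which forces $\lam(x)^2=\var^2+|x-x_0|^2$.
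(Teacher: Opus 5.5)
The paper gives no proof of this lemma; it quotes it from Li \cite{L2004}, so your argument has to stand on its own. Your first step is fine. Letting $|y|\to\infty$ in the identity at any center gives $\lim_{|y|\to\infty}|y|^\nu f(y)=\lam(x)^\nu f(x)=:A$ for every $x$. Hence $f\equiv0$, or $f$ has one sign. You should treat $\nu=0$ separately: there $f$ is constant and $f^{-2/\nu}$ is meaningless.

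\textbf{The gap.} It is the step you yourself call the main obstacle, and your proposed fix is circular. Fix a center $x$ and put $h=\lam(x)^2(y-x)/|y-x|^2$. The identity says exactly
\[
|y|^\nu f(y)-A=\Big[\Big(\frac{|y|}{|y-x|}\Big)^{\nu}-1\Big]\lam(x)^\nu f(x+h)+\lam(x)^\nu\big(f(x+h)-f(x)\big).
\]
Continuity handles the first term, which is $\nu A\,x\cdot y/|y|^2+o(|y|^{-1})$. The second term, however, is a difference quotient of $f$ at $x$ at scale $|h|\approx\lam(x)^2/|y|$, and $y\mapsto h$ maps a neighborhood of infinity onto a punctured neighborhood of $0$. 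So the identity only proves that $g$ is differentiable at $0$ if and only if $f$ is differentiable at $x$. The resulting gradient is $\nabla g(0)=\lam(x)^{\nu+2}\nabla f(x)+\nu Ax$, which contains $\nabla f(x)$, not just $x$ and $f(x)$.

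Using other centers does not help. Each identity writes $g$ near $0$ as a smooth weight times $f$ composed with a M\"obius map sending $0$ to $x$, so changing the center just moves the same question to another point. No bookkeeping of single-center expansions yields $|y|^\nu f(y)=A+\beta\cdot y/|y|^2+o(|y|^{-1})$ from continuity alone. As written, your argument proves the lemma only for $f\in C^1$. In that case independence of $\lam(x)^{\nu+2}\nabla f(x)+\nu Ax$ from $x$ does give $f^{-2/\nu}=A^{-2/\nu}|x|^2+\beta\cdot x+\gamma$, and your completion of the square finishes.

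\textbf{How to close it.} For the way the lemma is used in this paper, the $C^1$ version suffices. It is applied to $v=u(\cdot,0)$ with $u\in C^4(\overline{\R^{n+1}_+})$, so $f=v\in C^4(\Rn)$.

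For the statement as given, with $f$ merely continuous, you need an idea that uses all the inversions at once. One route, for $\nu\ne0$ and $f>0$:
\begin{enumerate}
\item The metric $f^{4/\nu}|\d y|^2$ is invariant under every inversion $y\mapsto x+\lam(x)^2(y-x)/|y-x|^2$; the exponent $4/\nu$ exactly cancels the weight.
\item Since $|y|^\nu f(y)\to A>0$, this is a continuous conformal metric on $\Sn$.
\item The M\"obius maps preserving it are isometries of a compact metric space. They form a closed, equicontinuous, hence compact group, which is conjugate into $O(n+1)$.
\item After conjugation, each inversion becomes the reflection in a hyperplane through the origin.
\item Because $\lam$ is continuous and distinct centers give distinct fixed spheres, invariance of domain shows that the normals of these hyperplanes fill an open set. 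The reflections therefore generate a group containing $SO(n+1)$.
\item The conformal factor is then constant, so the metric is round. This is exactly $f=a(\var^2+|x-x_0|^2)^{-\nu/2}$.
\end{enumerate}
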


	\medskip

	\noindent\textbf{Acknowledgements.}~		
			We thank Professor Xuezhang Chen for reading an earlier version of this manuscript and for providing valuable comments on the necessity of studying the fourth-order ODE \eqref{BVP_U-1}.  L. Sun is partially supported by CAS Project for Young Scientists in Basic Research Grant (No.\,YSBR-031), Strategic Priority Research Program of the Chinese Academy of Sciences (No.\,XDB0510201), National Key R\&D Program of China (No.\,2022YFA1005601), NSFC China (No.\,12471115). H. Wang is supported by China Postdoctoral Science Foundation 2026M793418.
			S. Zhang is supported by the Postdoctoral Fellowship
			Program and China Postdoctoral Science Foundation under Grant Numbers BX20250062 and 2026M793381.

\medskip

		\noindent\textbf{Data availability statement.}~	 No external datasets were used in this study. The numerical computations
reported in this article are provided solely to illustrate the theoretical
results, and no separate research dataset is available.

\medskip

		\noindent\textbf{Declarations.}~ \textbf{Conflict of interest.} ~ On behalf of all authors, the corresponding author states that there is no conflict of interest.

\medskip

\noindent\textbf{AI assistance statement.} ~The authors   used OpenAI models only for language editing, expository refinement, and preliminary consistency checks. All mathematical statements and proofs were independently verified by the authors, who take full responsibility for the content of the manuscript.



			\bibliographystyle{plain}  \bibliography{QT_ref}
		\end{document}